\newtheorem{thm}{Theorem}[section]
\newtheorem{prop}[thm]{Proposition}
\newtheorem{lem}[thm]{Lemma}
\newtheorem{cor}[thm]{Corollary}
\newenvironment{dfn}{\medskip\refstepcounter{thm}
\noindent{\bf Definition \thesection.\arabic{thm}.}}{\medskip}
\newenvironment{ex}{\medskip\noindent{\bf Example.}}{\medskip}
\newenvironment{remark}[1][Remark.]{\begin{trivlist}
\item[\hskip \labelsep {\bfseries #1}]}{\end{trivlist}}
\newenvironment{note}[1][Note]{\begin{trivlist}
\item[\hskip \labelsep {\bfseries #1}]}{\end{trivlist}}
\def\R{\mathbb{R}}
\def\C{\mathbb{C}}
\def\P{\mathbb{P}}
\def\O{\mathbb{O}}
\def\Z{\mathbb{Z}}
\def\H{\mathbb{H}}
\def\SS{\mathbb{S}}
\def\d{\mathrm{d}}
\DeclareMathOperator\id{id}
\DeclareMathOperator\SO{SO}
\DeclareMathOperator\SU{SU}
\DeclareMathOperator\GL{GL}
\DeclareMathOperator\GG{G}
\DeclareMathOperator\dett{det}
\DeclareMathOperator\Span{Span}
\DeclareMathOperator\Ker{Ker}
\def\Im{{\rm Im }}
\DeclareMathOperator\Spin{Spin}
\DeclareMathOperator\Vol{Vol}
\DeclareMathOperator\vol{vol}
\DeclareMathOperator\Div{div}
\DeclareMathOperator\Ree{Re}
\DeclareMathOperator\Imm{Im}
\DeclareMathOperator\Hess{Hess}
\newcommand{\Dslash}{\ensuremath \hspace{1pt}\raisebox{0.025cm}{\slash}\hspace{-0.24cm} D}
\begin{document}

\title{Calibrated Submanifolds}
\author{Jason D. Lotay\\ {\normalsize University College London}}
\date{}
\maketitle

\begin{abstract}
We provide an introduction to the theory of calibrated submanifolds through the key examples related with special holonomy.  We focus on calibrated geometry in Calabi--Yau, $\GG_2$ and $\Spin(7)$ manifolds, and describe fundamental results and techniques in the field. 
\end{abstract}

\tableofcontents

\section*{Introduction}

A key aspect of mathematics  is the study of variational problems.  These can vary from the purely analytic to the very geometric.  A classic geometric 
example is the study of geodesics, which are critical points for the length functional on curves.  As we know, understanding the geodesics of a given 
Riemannian manifold allows us to understand some of the ambient geometry, for example the curvature.  
The higher dimensional analogue would be to study critical points for the 
volume functional, and we would hope (and it indeed turns out to be the case) that these critical points, called \emph{minimal submanifolds}, encode crucial aspects of 
the geometry of the manifold.  

Just like the geodesic equation, we would expect (and it is true) that minimal submanifolds are defined by a (nonlinear) second order partial differential equation.  
  Such equations are very difficult to solve in general, so a key idea is to find a special class of minimal submanifolds, called \emph{calibrated submanifolds}, which are instead defined by a first order partial 
  differential equation.  The definition of calibrated submanifolds is motivated by the properties of complex submanifolds 
in K\"ahler manifolds, and turns out to be useful in finding minimizers for the volume functional rather than just critical points. 
However, finding examples outside the classical complex setting turns out to be difficult, 
leading to important methods coming from a variety of sources, as well as motivating the
study of the deformation theory of these objects.

Calibrated submanifolds naturally arise when the ambient manifold 
 has \emph{special holonomy}, including holonomy $\GG_2$.  In this situation, we 
  would hope that the calibrated submanifolds encode even more, finer, information about the ambient manifold, potentially leading to the construction of  new 
  invariants.  In this setting, there is also a relationship between calibrated submanifolds and gauge theory: specifically, connections whose curvature satisfies 
  a natural constraint determined by the special holonomy group (so-called \emph{instantons}).  
For these reasons, calibrated submanifolds form a hot topic in current research, especially in the $\GG_2$ setting.

\begin{note}
These notes are primarily based on a lecture course the author gave at the LMS--CMI Research School ``An Invitation to Geometry and Topology via $\GG_2$'' at Imperial College London in July 2014.
\end{note}

\section{Minimal submanifolds}

We start by analysing the submanifolds which are critical points for the volume functional.  Let $N$ be a submanifold (without boundary) 
of a Riemannian manifold $(M,g)$ and let 
$F:N\times(-\epsilon,\epsilon)\rightarrow M$ be a variation of $N$ with compact support; i.e.~$F=\text{Id}$ outside a compact subset $\overline{S}$ of $N$ with $S$ open 
and $F(p,0)=p$ for all 
$p\in N$.  The vector field $X=\frac{\partial F}{\partial t}|_N$ is called the variation vector field (which will be zero outside of $\overline{S}$).  We then have the following definition.

\begin{dfn}
$N$ is \emph{minimal} if $\frac{\d}{\d t}\Vol(F(S,t))|_{t=0}=0$ for all variations $F$ with compact support $\overline{S}$ (depending on $F$).
\end{dfn}

\begin{remark}
Notice that we do not ask for $N$ to minimize volume: it is only stationary for the volume.  It could even be a maximum!
\end{remark}

\begin{ex}
A plane in $\R^n$ is minimal since any small variation  will have larger volume.
\end{ex}

\begin{ex}
Geodesics are locally length minimizing, so geodesics are minimal.  However, as an example,  the equator in $\mathcal{S}^2$ is minimal
but not length minimizing since we can deform it to a shorter line of latitude.  
\end{ex}

For simplicity let us suppose that $N$ is compact. We wish to calculate
$\frac{\d}{\d t}\Vol(F(N,t))|_{t=0}$.  Given local coordinates $x_i$ on $N$ we know that
$$\Vol(F(N,t))=\int_N\sqrt{\det\left(g\left(\frac{\partial F}{\partial x_i},\frac{\partial F}{\partial x_j}\right)\right)}\vol_N.$$ 
Let $p\in N$ and choose our coordinates $x_i$ to be normal coordinates at $p$: i.e.~so that $\frac{\partial F}{\partial x_i}(p,t)=e_i(t)$ satisfy $g(e_i(0),e_j(0))=\delta_{ij}$.  If $g_{ij}(t)=g(e_i(t),e_j(t))$ and $(g^{ij}(t))$ denotes the inverse of the matrix $(g_{ij}(t))$ then
we know that
$$\frac{\d}{\d t}\sqrt{\det(g_{ij}(t))}|_{t=0}=\frac{1}{2}\frac{\sum_{i,j}g^{ij}(t)g_{ij}'(t)}{\sqrt{\det(g_{ij}(t))}}|_{t=0}=\frac{1}{2}\sum_ig_{ii}'(0).$$

Now, if we let $\nabla$ denote the Levi-Civita connection of $g$, then
\begin{align*}
\frac{1}{2}\sum_ig_{ii}'(0)&=\frac{1}{2}\sum_i \frac{\d}{\d t}g\left(\frac{\partial F}{\partial x_i},\frac{\partial F}{\partial x_i}\right)|_{t=0}\\
&=\sum_ig(\nabla_Xe_i,e_i)\\
&=\sum_ig(\nabla_{e_i}X,e_i)=\Div_N(X)
\end{align*}
since $[X,e_i]=0$ (i.e.~the $t$ and $x_i$ derivatives commute).  Moreover, we see that
$$\Div_N(X)=\sum_ig(\nabla_{e_i}X,e_i)=\Div_N(X^{\rm T})-\sum_ig(X^{\perp},\nabla_{e_i}e_i)=\Div_N(X^{\rm T})-g(X,H)$$
(since $\nabla_{e_i}\big(g(X^{\perp},e_i)\big)=0$) where ${}^{\rm T}$ and ${}^\perp$ denote the tangential and normal parts and
$$H=\sum_{i}\nabla^{\perp}_{e_i}e_i$$
is the \emph{mean curvature vector}.  Overall we have the following.

\begin{thm} The first variation formula is
$$\frac{\d}{\d t}\Vol(F(N,t))|_{t=0}=\int_N \Div_N(X)\vol_N=-\int_N g(X,H)\vol_N.$$
\end{thm}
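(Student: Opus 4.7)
The plan is to assemble the pointwise computation carried out in the text into a global integrated identity, and then to handle the divergence term using the hypotheses on $N$ and $F$. Since the volume can be written, in local coordinates $x_i$ on $N$, as
$$
\Vol(F(N,t)) = \int_N \sqrt{\dett\!\bigl(g_{ij}(t)\bigr)}\,\vol_N,
$$
differentiation under the integral sign is legitimate (the integrand depends smoothly on $t$ and everything is supported on a compact piece of $N$, either because $N$ is compact or because $F$ has compact support $\overline{S}$). So the first step is simply to differentiate at $t=0$, obtaining an integrand which is pointwise $\tfrac12 \sum_i g_{ii}'(0)$, computed in an orthonormal frame at the point in question.

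The second step is to identify this integrand intrinsically. Using that $[X,e_i]=0$ and that $\nabla$ is metric, I would repeat the manipulation already performed in the text to get $\tfrac12 \sum_i g_{ii}'(0) = \sum_i g(\nabla_{e_i} X, e_i) = \Div_N(X)$. This is a purely pointwise identity in an orthonormal frame and is independent of the chart, so integrating over $N$ gives the first equality
$$
\frac{\d}{\d t}\Vol(F(N,t))\Big|_{t=0} = \int_N \Div_N(X)\,\vol_N.
$$

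The third step is the splitting $X = X^{\mathrm T} + X^{\perp}$, again done pointwise in the text: one obtains $\Div_N(X) = \Div_N(X^{\mathrm T}) - g(X,H)$, where the key observation is that $g(X^{\perp}, e_i)\equiv 0$ on $N$ so differentiating in the tangential direction $e_i$ throws the derivative onto $e_i$, producing the normal component of $\nabla_{e_i}e_i$, i.e.\ the mean curvature $H$. Integrating this equation and applying the divergence theorem for the intrinsic connection on $N$ kills the $\int_N \Div_N(X^{\mathrm T})\,\vol_N$ term, yielding the second equality of the theorem.

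The main obstacle is the justification of $\int_N \Div_N(X^{\mathrm T})\,\vol_N = 0$: if $N$ is compact and without boundary this is immediate from Stokes's theorem applied to the $(n-1)$-form dual to $X^{\mathrm T}$, and if $N$ is noncompact one has to invoke the assumption that $X$ vanishes outside $\overline{S}$, so that $X^{\mathrm T}$ is compactly supported on $N$ and Stokes's theorem still applies. Everything else is routine bookkeeping around the determinant derivative and the definition of the Levi-Civita connection.
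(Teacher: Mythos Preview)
Your proposal is correct and follows essentially the same route as the paper: the paper carries out the pointwise computation (determinant derivative, identification with $\Div_N(X)$ via $[X,e_i]=0$, and the splitting into $\Div_N(X^{\mathrm T})-g(X,H)$) in the text immediately preceding the theorem, and then disposes of the tangential divergence term by the divergence theorem in the remark afterward, exactly as you outline. Your added care about differentiation under the integral and the compact-support justification in the noncompact case matches the paper's remark.
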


\begin{remark}
The $\Div_N(X^{\rm T})$ term does not appear in the first variation formula because its integral vanishes by the divergence theorem as $N$ is compact without boundary.  In general, it will still vanish since we assume for our variations that there exists a compact submanifold of $N$ with boundary which contains the support of $X^{\rm T}$ and so that $X^{\rm T}$ vanishes on the boundary.
\end{remark}

We deduce the following.

\begin{dfn}
$N$ is a \emph{minimal submanifold} if and only if $H=0$.
\end{dfn}

The equation $H=0$ is a \emph{second order nonlinear PDE}.  We can see this explicitly in the following simple case.  For a function $f:U\subseteq\R^{n-1}\rightarrow\R$ where $\overline{U}$ is compact, we see that if $N=\text{Graph}(f)\subseteq\R^n$ then the volume of $N$ is given by
$$\Vol(N)=\int_U\sqrt{1+|\nabla f|^2}\vol_U.$$
Any sufficiently small variation can be written $F(N,t)=\text{Graph}(f+th)$ for some $h:U\rightarrow\R$, so we can compute
\begin{align*}
\frac{\d}{\d t}\Vol(F(N,t))|_{t=0}&=\frac{\d}{\d t}|_{t=0}\int_U\sqrt{1+|\nabla f+t\nabla h|^2}\vol_U\\
&=\int_U\frac{\d}{\d t}|_{t=0}\sqrt{1+|\nabla f|^2+2t\langle\nabla f,\nabla h\rangle+t^2|\nabla h|^2}\vol_U\\
&=\int_U\frac{\langle \nabla f,\nabla h\rangle}{\sqrt{1+|\nabla f|^2}}\vol_U\\
&=-\int_U h\Div\left(\frac{\nabla f}{\sqrt{1+|\nabla f|^2}}\right)\vol_U.
\end{align*}
We therefore see that $N$ is minimal if and only if this vanishes for all $h$.  Hence, $\text{Graph}(f)$ is minimal in $\R^n$ if and only if 
$$\Div\left(\frac{\nabla f}{\sqrt{1+|\nabla f|^2}}\right)=0.$$
We see that we can write this equation as $\Delta f+Q(\nabla f,\nabla^2 f)=0$ where $Q$ consists of nonlinear terms (but linear in $\nabla^2f$).  Hence,
if we linearise this equation we just get $\Delta f=0$, so $f$ is harmonic.  In other words, the minimal 
 submanifold equation is a nonlinear equation whose linearisation is just Laplace's equation: this is an example of a nonlinear \emph{elliptic} PDE, which we 
 shall discuss further later.

\begin{ex}
A plane in $\R^n$ is trivially minimal because if $X,Y$ are any vector fields on the plane then $\nabla_X^{\perp}Y=0$ as the second fundamental form of a plane is zero.
\end{ex}

\begin{ex}
For curves $\gamma$, $H=0$ is equivalent to the geodesic equation $\nabla_{\dot{\gamma}}\dot{\gamma}=0$.
\end{ex}

The most studied minimal submanifolds (other than geodesics) are minimal surfaces in $\R^3$, since here the equation $H=0$ becomes a scalar equation on a surface, which is the simplest to analyse.  In general we would have a system of equations, which is more difficult to study.

\begin{ex}
The helicoid $M=\{(t\cos s,t\sin s,s)\in\R^3\,:\,s,t\in\R\}$ is a complete embedded minimal surface, discovered by Meusnier in 1776.
\end{ex}

\begin{ex}
The catenoid $M=\{(\cosh t\cos s,\cosh t\sin s ,t)\in\R^3\,:\,s,t,\in\R\}$ is a complete embedded minimal surface, discovered by Euler in 1744 and shown to be minimal 
by Meusnier in 1776.  The catenoid is another explicit example which is a critical point for volume but not minimizing.
\end{ex}

In fact the helicoid and the catenoid are locally isometric, and there is a 1-parameter family of locally isometric minimal surfaces deforming between the catenoid and helicoid: see, for example, \cite[Theorem 16.5]{Gray} for details.

It took about 70 years to find the next minimal surface, but now we know many examples of minimal surfaces in $\R^3$, as well as in other spaces by studying 
the nonlinear elliptic PDE given by the minimal surface equation.  The amount of 
literature in the area is vast, with key results including the proofs of the Lawson \cite{BLawson}, Willmore \cite{MNWillmore} and Yau \cite{IMNYau,MNYau,SongYau} Conjectures, and minimal surfaces have applications to major problems in geometry including the Positive Mass Theorem \cite{SYMass,SYMass2}, Penrose Inequality \cite{HIPenrose} and Poincar\'e Conjecture \cite{Perelman}.

\section{Introduction to calibrations}

As we have seen, minimal submanifolds are extremely important.  However there are two key issues.  
\begin{itemize}
\item Minimal submanifolds are defined by a second order nonlinear PDE system -- therefore they are hard to analyse.
\item Minimal submanifolds are only critical points for the volume functional, but we are often interested in minima for the volume functional -- we need 
a way to determine when this occurs.
\end{itemize}

We can help resolve these issues using the notion of calibration and calibrated submanifolds, introduced by Harvey--Lawson \cite{HL} in 1982.

\begin{dfn} \label{calibration.dfn}
A differential $k$-form $\eta$ on a Riemannian manifold $(M,g)$ is a \emph{calibration} if
\begin{itemize}
\item $\d\eta=0$ and
\item $\eta(e_1,\ldots,e_k)\leq 1$ for all unit tangent vectors $e_1,\ldots,e_k$ on $M$.
\end{itemize}
\end{dfn}

\begin{ex}
Any non-zero form with constant coefficients on $\R^n$ can be rescaled so that it is a calibration with at least one plane where equality holds.
\end{ex}

This example shows that there are many calibrations $\eta$, but the interesting question is: for which oriented planes $P=\Span\{e_1,\ldots,e_k\}$ does 
$\eta(e_1,\ldots,e_k)=1$?  
More importantly, can we find submanifolds $N$ so that this equality holds on each tangent space?  This motivates the next definition.

\begin{dfn}\label{calibsub.dfn} Let $\eta$ be a calibration $k$-form on $(M,g)$.
An oriented $k$-dimensional submanifold $N$ of $(M,g)$ is \emph{calibrated} by $\eta$ if $\eta|_N=\vol_N$, i.e.~if for all $p\in N$ we have 
$\eta(e_1,\ldots,e_k)=1$ for an oriented orthonormal basis $e_1,\ldots,e_k$ for $T_pN$.
\end{dfn}

\begin{ex}
Any oriented plane in $\R^n$ is calibrated.  If we change coordinates so that the plane $P$ is $\{x\in\R^n\,:\,x_{k+1}=\ldots=x_n=0\}$ (with the obvious orientation) then 
$\eta=\d x_1\wedge\ldots\wedge \d x_k$ is a calibration and $P$ is calibrated by $\eta$.
\end{ex}

Notice that the calibrated condition is now an algebraic condition on the tangent vectors to $N$, so being calibrated is a \emph{first order nonlinear PDE}.
We shall motivate these definitions further later, but for now we make the following observation.

\begin{thm}\label{calibmin.thm}
Let $N$ be a calibrated submanifold.  Then $N$ is minimal and, moreover, if 
$F$ is any variation with compact support $\overline{S}$ then $\Vol(F(S,t))\geq\Vol(S)$; i.e.~$N$ is volume-minimizing.  In particular, if $N$ is compact then $N$ is volume-minimizing in its homology class.
\end{thm}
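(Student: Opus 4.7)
The plan is to exploit the two defining properties of a calibration (closedness and the unit comass bound) together with Stokes' theorem. The key observation is that on a calibrated submanifold $\eta$ restricts to the volume form, so integrating $\eta$ computes $\Vol(N)$ exactly, whereas on any competitor submanifold $\eta$ restricts to at most the volume form pointwise.

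First I would compare $N$ with a competitor $N'=F(N,t)$ coming from a compactly supported variation. Since $F$ equals the identity outside a compact set, the image $F(\overline{S}\times[0,t])$ is a compact $(k+1)$-chain $T$ whose boundary is $F(S,t)-S$ (the rest of $\partial T$ cancels where the variation is trivial). By Stokes' theorem and $\d\eta=0$,
$$\int_{F(S,t)}\eta-\int_{S}\eta=\int_{\partial T}\eta=\int_T\d\eta=0,$$
so $\int_{F(S,t)}\eta=\int_S\eta=\Vol(S)$, where the last equality uses the calibrated condition $\eta|_N=\vol_N$. On the other hand, at each point of $F(S,t)$ the comass bound gives $\eta(e_1,\ldots,e_k)\leq 1$ for any oriented orthonormal frame, hence $\eta|_{F(S,t)}\leq\vol_{F(S,t)}$ pointwise, and integrating yields $\int_{F(S,t)}\eta\leq\Vol(F(S,t))$. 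Combining these gives $\Vol(S)\leq\Vol(F(S,t))$, which is the volume-minimizing statement for variations.

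Minimality then follows immediately from Theorem 1.1 (or just from the fact that $t=0$ is a minimum of $t\mapsto\Vol(F(S,t))$, so its derivative vanishes there). For the global homological statement, I would run exactly the same Stokes argument: if $N$ is compact and $N'$ is homologous to $N$, there is a $(k+1)$-chain $T$ with $\partial T=N'-N$, so $\int_{N'}\eta=\int_N\eta+\int_T\d\eta=\Vol(N)$, while $\int_{N'}\eta\leq\Vol(N')$ by the comass bound, giving $\Vol(N)\leq\Vol(N')$.

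The only subtle point — and the one I would be most careful about — is justifying Stokes' theorem in the variational version when $F$ is merely a smooth map rather than an embedding (the image $T$ need not be a smooth submanifold). The standard fix is to view $T$ as a smooth singular chain, or to pull $\eta$ back to $N\times[0,t]$ via $F^*$ and apply Stokes on the cylinder directly: $\int_{N\times\{t\}}F^*\eta-\int_{N\times\{0\}}F^*\eta=\int_{N\times[0,t]}\d(F^*\eta)=0$, with the side contributions vanishing since $F$ is the identity there. This sidesteps any issue with singularities or self-intersections of $F$.
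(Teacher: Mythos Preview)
Your proposal is correct and follows essentially the same approach as the paper: use Stokes' theorem (with $\d\eta=0$) to show $\int_{N'}\eta=\int_N\eta=\Vol(N)$, then the comass bound to get $\int_{N'}\eta\leq\Vol(N')$. The paper only writes out the compact homological case ``for simplicity'', whereas you additionally treat the variational case directly and handle the Stokes-on-a-singular-chain issue by pulling back to the cylinder, so your argument is in fact slightly more complete than the paper's.
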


\begin{proof}
Suppose that $N$ is calibrated by $\eta$ and suppose for simplicity that $N$ is compact.  We will show that $N$ is homologically volume-minimizing.

Suppose that $N'$ is homologous to $N$.   Then there exists a compact manifold $K$ with boundary $-N\cup N'$ and, since $\d\eta=0$, we have by Stokes' Theorem that
$$0=\int_K\d\eta=\int_{N'}\eta-\int_N\eta.$$ 
We deduce that
\begin{align*}
\Vol(N)&=\int_N\eta=\int_{N'}\eta\leq\Vol(N').
\end{align*}
We then have the result by the definition of minimal submanifold.
\end{proof}

We conclude this introduction with the following elementary result.

\begin{prop}
There are no compact calibrated submanifolds in $\R^n$.
\end{prop}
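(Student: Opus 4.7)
The plan is to exploit the trivial topology of $\R^n$: every closed form there is exact, so the calibration form $\eta$ integrates to zero over any compact submanifold without boundary, contradicting the fact that such an integral should equal the positive volume.

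In more detail, suppose for contradiction that $N$ is a compact $k$-dimensional calibrated submanifold of $\R^n$, calibrated by some $k$-form $\eta$. The first step is simply to unpack Definition \ref{calibsub.dfn}: the condition $\eta|_N=\vol_N$ gives
\[
\int_N\eta=\int_N\vol_N=\Vol(N)>0,
\]
where positivity uses that $N$ is a non-empty compact $k$-manifold with $k\geq 1$ (calibrations are forms of positive degree by Definition \ref{calibration.dfn}).

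The second step is to use the Poincar\'e lemma: since $\R^n$ is contractible and $\d\eta=0$, we can write $\eta=\d\zeta$ for some globally defined $(k-1)$-form $\zeta$ on $\R^n$. Then Stokes' theorem applied on the compact manifold $N$ without boundary yields
\[
\int_N\eta=\int_N\d\zeta=\int_{\partial N}\zeta=0,
\]
which contradicts the previous display. (Alternatively, one can imitate the proof of Theorem \ref{calibmin.thm} directly: any compact $N\subset\R^n$ is null-homologous, so bounds a chain $K$, and Stokes gives $\int_N\eta=\int_K\d\eta=0$; this is essentially the same argument phrased homologically.)

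There is no serious obstacle here --- the entire content is the observation that $H^k_{\mathrm{dR}}(\R^n)=0$ for $k\geq 1$. The only tiny thing to check is that the degree $k$ is at least one, which is automatic since a calibration, being a form used to measure oriented tangent planes of positive dimension, has positive degree. This proposition also motivates the fact that interesting examples of calibrated submanifolds require either non-compact submanifolds in $\R^n$ or ambient manifolds with non-trivial topology and/or special holonomy.
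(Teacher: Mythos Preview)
Your proof is correct and follows essentially the same approach as the paper: both use the Poincar\'e Lemma to write $\eta=\d\zeta$ on $\R^n$ and then Stokes' Theorem on the compact boundaryless $N$ to deduce $\Vol(N)=\int_N\eta=0$. Your write-up merely adds a few extra justifications (positivity of volume, $k\geq 1$) and an alternative homological phrasing, but the argument is the same.
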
  

\begin{proof}
Suppose that $\eta$ is a calibration and $N$ is compact and calibrated by $\eta$.  Then $\d\eta=0$ so by the Poincar\'e Lemma $\eta=\d\zeta$, and hence
$$\Vol(N)=\int_N\eta=\int_N\d\zeta=0$$
by Stokes' Theorem.
\end{proof}

Although there are many calibrations, having calibrated submanifolds greatly restricts the calibrations you want to consider.  
The calibrations which have calibrated submanifolds have special significance and there is a particular connection with special holonomy, due to the following 
observations.

Let $G$ be the holonomy group of a Riemannian metric $g$ on an $n$-manifold $M$.  
Then $G$ acts on the $k$-forms on $\R^n$, so suppose that $\eta_0$ is a $G$-invariant $k$-form.  We can 
always rescale $\eta_0$ so that $\eta_0|_P\leq \vol_P$ for all oriented $k$-planes $P$ and equality holds for at least one $P$.  Since $\eta_0$ is $G$-invariant, 
if $P$ is calibrated then so is $\gamma\cdot P$ for any $\gamma\in G$, which usually means we have quite a few calibrated planes.  
We know by the \emph{holonomy principle} (see, for example, \cite[Proposition 2.5.2]{Joycebook2})
that we then get a parallel $k$-form $\eta$ on $M$ which is identified with $\eta_0$ at every point.  Since $\nabla\eta=0$, we have $\d\eta=0$ 
and hence $\eta$ is a calibration.  Moreover, we have a lot of calibrated tangent planes on $M$, so we can hope to find calibrated submanifolds.

\section{Complex submanifolds}

We would now like to address the question: where does the calibration condition come from?  The answer is from \emph{complex geometry}.  On $\R^{2n}=\C^n$ 
with coordinates $z_j=x_j+iy_j$, we have the complex structure $J$ and the distinguished K\"ahler 2-form 
$$\omega=\sum_{j=1}^n\d x_j\wedge \d y_j=\frac{i}{2}\sum_{j=1}^n\d z_j\wedge\d \overline{z}_j.$$
More generally we can work with a \emph{K\"ahler manifold} $(M,J,\omega)$.  Our first key result is the following.

\begin{thm}\label{cxcalib.thm} On a K\"ahler manifold $(M,J,\omega)$, 
$\frac{\omega^k}{k!}$ is a calibration whose calibrated submanifolds are the complex $k$-dimensional submanifolds: i.e.~submanifolds $N$ 
such that $J(T_pN)=T_pN$ for all $p\in N$.  
\end{thm}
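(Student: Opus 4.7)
The plan is to reduce the statement to a pointwise linear-algebraic problem, since both calibration conditions are pointwise. Closedness of $\omega^k/k!$ is immediate from the K\"ahler condition $\d\omega=0$, because $\d(\omega^k)=k\omega^{k-1}\wedge\d\omega=0$. The substantive content is therefore the comparison inequality (Wirtinger's inequality): for any point $p\in M$ and any oriented orthonormal frame $e_1,\ldots,e_{2k}\in T_pM$ spanning a $2k$-plane $P$, one has $\frac{\omega^k}{k!}(e_1,\ldots,e_{2k})\le 1$, with equality precisely when $P$ is a complex subspace of $(T_pM,J)$.

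To prove this I would fix $p$ and work in $T_pM\cong\R^{2n}$ with its compatible data $(g,J,\omega)$. The key step is to put the restriction $\omega|_P$ into skew-symmetric normal form: there exists an orthonormal, oriented basis $f_1,g_1,\ldots,f_k,g_k$ of $P$ such that
\[
\omega|_P \;=\; \sum_{i=1}^{k}\lambda_i\, f_i^{*}\wedge g_i^{*}, \qquad \lambda_i\in\R.
\]
Evaluating $\omega^k/k!$ on this frame gives $\lambda_1\cdots\lambda_k$, so the inequality reduces to $|\lambda_i|\le 1$ for each $i$. But $\lambda_i=\omega(f_i,g_i)=g(Jf_i,g_i)$, and since $f_i,g_i$ are unit vectors with $|Jf_i|=1$, Cauchy--Schwarz gives $|\lambda_i|\le 1$.

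For the equality case, $\lambda_1\cdots\lambda_k=1$ with each $|\lambda_i|\le 1$ forces every $\lambda_i=1$ (using that the chosen orientation matches); the equality case of Cauchy--Schwarz then yields $g_i=Jf_i$, so $P=\Span\{f_1,Jf_1,\ldots,f_k,Jf_k\}$ is $J$-invariant. Conversely, given a $J$-invariant plane $P$, choose a unitary basis $f_1,\ldots,f_k$ of $P$ for the induced Hermitian structure; then $f_1,Jf_1,\ldots,f_k,Jf_k$ is an oriented orthonormal real basis in which $\omega|_P=\sum_i f_i^{*}\wedge(Jf_i)^{*}$, and a direct expansion gives $\omega^k|_P/k!=\vol_P$.

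Combining these pointwise facts: $\omega^k/k!$ is a calibration, and a $2k$-dimensional oriented submanifold $N$ is calibrated by it iff each tangent space $T_pN$ is a complex subspace of $T_pM$, i.e.\ $J(T_pN)=T_pN$, which is exactly the condition for $N$ to be a complex submanifold. The main obstacle is the pointwise Wirtinger inequality with its sharp equality analysis; everything else (closedness, interpretation via Definition \ref{calibsub.dfn}, and the minimizing conclusion via Theorem \ref{calibmin.thm}) is formal.
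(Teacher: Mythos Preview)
Your argument is correct, and it proves Wirtinger's inequality by a route different from the paper's. The paper first invokes Lemma~\ref{starcab.lem} (the Hodge dual of a calibration is a calibration) to reduce to the range $2k\le n$, and then puts an arbitrary $2k$-plane $P\subset\C^n$ into standard position \emph{extrinsically}: it iteratively maximises $\langle Ju,v\rangle$ over orthonormal pairs in $P$ to produce K\"ahler angles $\theta_1\le\cdots\le\theta_k$ and an adapted unitary frame of the ambient $\C^n$, so that $\frac{\omega^k}{k!}|_P=\prod_j\cos\theta_j$. Your approach is \emph{intrinsic} to $P$: you diagonalise the skew form $\omega|_P$ on the inner product space $P$ via the spectral theorem, obtaining $\omega|_P=\sum_i\lambda_i\,f_i^*\wedge g_i^*$ and hence $\frac{\omega^k}{k!}|_P=\lambda_1\cdots\lambda_k\,\vol_P$, then bound each $|\lambda_i|=|g(Jf_i,g_i)|\le 1$ by Cauchy--Schwarz. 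The two decompositions are equivalent (your $\lambda_i$ are the paper's $\cos\theta_i$), but your argument is shorter, needs no case $2k\le n$ reduction, and avoids constructing an ambient frame. One small point worth tightening: from $\prod_i\lambda_i=1$ and $|\lambda_i|\le 1$ you only get $|\lambda_i|=1$, hence $g_i=\pm Jf_i$; either sign still gives $J$-invariance of $P$, so the conclusion stands (alternatively, normalise the $\lambda_i\ge 0$ from the outset by swapping $f_i\leftrightarrow g_i$ where needed).
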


Since $\d\omega^k=k\d\omega\wedge\omega^{k-1}=0$, Theorem \ref{cxcalib.thm} follows immediately from the following result.

\begin{thm}[Wirtinger's inequality]\label{Wirtinger.thm}  For any unit vectors $e_1,\ldots,e_{2k}\in\C^n$, $$\frac{\omega^k}{k!}(e_1,\ldots,e_{2k})\leq 1$$  with equality if and only if 
$\Span\{e_1,\ldots,e_{2k}\}$ is a complex $k$-plane  in $\C^n$.
\end{thm}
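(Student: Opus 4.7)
The plan is to reduce the problem to understanding the pullback of $\omega$ to the $2k$-plane spanned by the $e_i$, and then to diagonalize this skew-symmetric form. First, if $e_1,\ldots,e_{2k}$ fail to be linearly independent then $\omega^k(e_1,\ldots,e_{2k})=0$ and there is nothing to prove, so I would assume they span a $2k$-dimensional real subspace $P\subseteq\C^n$. Picking an oriented orthonormal basis $f_1,\ldots,f_{2k}$ for $P$ and writing $e_i=\sum_j A_{ij}f_j$, multilinearity gives $\omega^k(e_1,\ldots,e_{2k})=\dett(A)\,\omega^k(f_1,\ldots,f_{2k})$, and Hadamard's inequality applied to the unit rows of $A$ yields $|\dett(A)|\le 1$. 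Thus it suffices to prove the bound for an orthonormal basis of $P$.

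The key step is then to analyse $\omega|_P$, a skew-symmetric $2$-form on the Euclidean space $P$. By the normal form for skew-symmetric bilinear forms (equivalently, the spectral theorem applied to the skew-adjoint operator on $P$ defined by $\omega|_P$ via the metric), there is an oriented orthonormal basis $f_1',\ldots,f_{2k}'$ of $P$ and real numbers $\lambda_1,\ldots,\lambda_k\ge 0$ such that
$$\omega|_P=\sum_{j=1}^k\lambda_j\,f'^{\,*}_{2j-1}\wedge f'^{\,*}_{2j}.$$
Expanding $\omega^k/k!$ in this basis then gives $\frac{\omega^k}{k!}(f_1',\ldots,f_{2k}')=\lambda_1\lambda_2\cdots\lambda_k$, so the problem is reduced to bounding the individual $\lambda_j$.

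For each $j$ I would use the compatibility $\omega(X,Y)=g(JX,Y)$ of the K\"ahler structure, so that
$$\lambda_j=\omega(f'_{2j-1},f'_{2j})=g(Jf'_{2j-1},f'_{2j})\leq \|Jf'_{2j-1}\|\,\|f'_{2j}\|=1$$
by Cauchy--Schwarz, since $J$ is an isometry. Multiplying these bounds together gives $\lambda_1\cdots\lambda_k\le 1$, and combined with Hadamard this finishes the inequality.

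The main obstacle, and the most interesting part, is the equality case. Equality forces $|\dett(A)|=1$ (so the $e_i$ are already orthonormal) and each $\lambda_j=1$. The latter saturates Cauchy--Schwarz, forcing $f'_{2j}=\pm Jf'_{2j-1}$; the sign is fixed by orientation and the positivity of the $\lambda_j$. Consequently each pair $\Span\{f'_{2j-1},f'_{2j}\}$ is a complex line, and $P$ is a direct sum of these complex lines, hence a complex $k$-plane. Conversely, if $P$ is a complex $k$-plane, choosing $f'_{2j}=Jf'_{2j-1}$ makes all $\lambda_j=1$ and realizes equality, so the characterisation is sharp.
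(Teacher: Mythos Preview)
Your argument is correct and takes a genuinely different route from the paper's. You work intrinsically on the $2k$-plane $P$: block-diagonalise the skew form $\omega|_P$ to get eigenvalues $\lambda_j$, bound each by Cauchy--Schwarz via $\omega(X,Y)=g(JX,Y)$, and handle non-orthonormal unit vectors with Hadamard. The paper instead first reduces to the range $2k\leq n$ using the Hodge star (Lemma~\ref{starcab.lem}), and then builds an \emph{ambient} canonical form: it maximises $\langle Ju,v\rangle$ over orthonormal $u,v\in P$, shows the remainder of $P$ lies in $\Span\{u,v,Ju,Jv\}^\perp$, and iterates to produce a unitary basis $e_1,Je_1,\ldots,e_n,Je_n$ of $\C^n$ in which $P$ is spanned by vectors $e_{2j-1},\,\cos\theta_j\,Je_{2j-1}+\sin\theta_j\,e_{2j}$. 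Your $\lambda_j$ are exactly the paper's $\cos\theta_j$, but you obtain them via pure linear algebra on $P$ rather than by an inductive geometric construction in $\C^n$.

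Your approach is cleaner and more self-contained for Wirtinger itself, and it avoids the Hodge-star reduction entirely. What the paper's approach buys is an explicit standard position for $P$ \emph{inside} $\C^n$, with named angles $\theta_j$; this canonical form (and the maximisation trick producing it) is recycled verbatim later in the notes---in the proofs that $\varphi$, $*\varphi$ and $\Phi$ are calibrations, and in Lemma~\ref{angles.lem} for the Angle Theorem---so for the paper's purposes the extra structure is worth the longer argument. One small point: when you insist on both $\lambda_j\geq 0$ and an \emph{oriented} basis simultaneously, the last $\lambda_k$ may be forced to carry a sign; this does not affect the inequality (the product is then $\leq 0$) or the equality analysis, but it is worth saying explicitly.
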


Before proving this we make the following observation.

\begin{lem}\label{starcab.lem}
If $\eta$ is a calibration and $*\eta$ is closed then $*\eta$ is a calibration.  Moreover an oriented tangent plane $P$ is calibrated by $\eta$ if and only if there is an orientation on the orthogonal complement $P^{\perp}$ so that it is calibrated by $*\eta$.
\end{lem}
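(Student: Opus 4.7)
\medskip
\noindent\textbf{Proof proposal for Lemma \ref{starcab.lem}.}

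The plan is to reduce everything to the pointwise algebraic identity that characterises the Hodge star. At any point $p \in M$, fix a positively oriented orthonormal frame $e_1,\ldots,e_n$ of $T_pM$. The defining property of $*$ then gives, for any $k$-form $\eta$,
\[
(*\eta)(e_{k+1},\ldots,e_n) \;=\; \eta(e_1,\ldots,e_k).
\]
This single identity, together with the fact that $\d(*\eta)=0$ is assumed, will drive the whole proof.

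First I would verify the comass bound for $*\eta$. Take any orthonormal vectors $v_1,\ldots,v_{n-k}$ at $p$ and let $Q=\Span\{v_1,\ldots,v_{n-k}\}$ with the orientation determined by this ordered basis. Complete the $v_i$'s to a positively oriented orthonormal frame $u_1,\ldots,u_k,v_1,\ldots,v_{n-k}$ of $T_pM$ (possibly flipping the sign of one $u_i$ to achieve positivity). Applying the identity above with $e_i=u_i$ for $i\le k$ and $e_{k+i}=v_i$ gives
\[
(*\eta)(v_1,\ldots,v_{n-k})\;=\;\eta(u_1,\ldots,u_k)\;\le\;1,
\]
where the final inequality is the hypothesis that $\eta$ is a calibration. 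Combined with $\d(*\eta)=0$, this establishes that $*\eta$ is a calibration.

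The second assertion is then immediate from the same identity read in both directions. If an oriented $k$-plane $P$ is calibrated by $\eta$, choose a positively oriented orthonormal basis $e_1,\ldots,e_k$ of $P$, extend to a positively oriented orthonormal frame $e_1,\ldots,e_n$ of $T_pM$, and give $P^\perp$ the orientation making $e_{k+1},\ldots,e_n$ positive; then $(*\eta)(e_{k+1},\ldots,e_n)=\eta(e_1,\ldots,e_k)=1$, so $P^\perp$ is calibrated by $*\eta$. Conversely, starting with a positively oriented orthonormal basis $f_1,\ldots,f_{n-k}$ of $P^\perp$ for which $(*\eta)$ attains $1$, completing to a positive frame $e_1,\ldots,e_k,f_1,\ldots,f_{n-k}$ exhibits $P$ as calibrated by $\eta$ with the induced orientation.

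I expect the only real obstacle to be bookkeeping with orientations: namely, confirming that the completion procedure is always possible with a positive global orientation (adjusting the sign of one completing vector if necessary), and checking that the orientation of $P^\perp$ that appears in the equivalence is precisely the one induced by the convention ``$(e_1,\ldots,e_k,e_{k+1},\ldots,e_n)$ positive''. Once this convention is fixed the signs in the Hodge-star identity are automatic, and the lemma follows.
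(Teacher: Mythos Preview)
Your proposal is correct and follows essentially the same argument as the paper: both proofs complete an orthonormal set spanning an $(n-k)$-plane to a positively oriented orthonormal frame of $T_pM$ and then invoke the defining identity $(*\eta)(e_{k+1},\ldots,e_n)=\eta(e_1,\ldots,e_k)$ to transfer the comass bound and the calibrated condition from $\eta$ to $*\eta$. Your discussion of the orientation bookkeeping is slightly more explicit than the paper's, but the method is identical.
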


\begin{proof}
Suppose that $\eta$ is a calibration $k$-form on $(M,g)$ with $\d\!*\!\eta=0$.  Let $p\in M$.  Take any $n-k$ orthonormal tangent vectors $e_{k+1},\ldots,e_n$ at $p$.  Then there exist $e_1,\ldots,e_k\in T_pM$ so that $\{e_1,\ldots,e_n\}$ is an oriented orthonormal basis
for $T_pM$.  Since $\{e_1,\ldots,e_n\}$ is an oriented orthonormal basis, we can use the definition of the Hodge star to calculate 
$$*\eta(e_{k+1},\ldots,e_n)=\eta(e_1,\ldots,e_k)\leq 1.$$
Hence $*\eta$ is a calibration by Definition \ref{calibration.dfn}.  Moreover,  the oriented plane $P=\Span\{e_{k+1},\ldots,e_n\}$ is calibrated by $*\eta$ if and only if there is an orientation on $\Span\{e_1,\ldots,e_k\}=P^{\perp}$ so that it is calibrated 
by $\eta$, since $\eta(e_1,\ldots,e_k)=\pm *\eta(e_{k+1},\ldots,e_n)=\pm 1$.
\end{proof}

We can now prove Wirtinger's inequality.

\begin{proof}[Proof of Theorem \ref{Wirtinger.thm}.]
We see that $|\frac{\omega^k}{k!}|^2=\frac{n!}{k!(n-k)!}$ and $\vol_{\C^n}=\frac{\omega^n}{n!}$ so $*\frac{\omega^k}{k!}=\frac{\omega^{n-k}}{(n-k)!}$.  Hence, 
by Lemma \ref{starcab.lem}, it is enough to study the case where $k\leq \frac{n}{2}$.

Let $P$ be any $2k$-plane in $\C^n$ with $2k\leq n$.  We shall find a canonical form for $P$.  First consider $\langle Ju,v\rangle$ for orthonormal vectors $u,v\in P$.  This 
must have a maximum, so let $\cos\theta_1=\langle Ju,v\rangle$ be this maximum realised by some orthonormal vectors $u,v\in P$, where $0\leq\theta_1\leq\frac{\pi}{2}$.  

Suppose that $w\in P$ is a unit vector orthogonal to 
$\Span\{u,v\}$, where $\cos\theta_1=\langle Ju,v\rangle$.  The function
$$f_w(\theta)=\langle Ju,\cos\theta v+\sin\theta w\rangle$$
has a maximum at $\theta=0$ so $f_w'(0)=\langle Ju,w\rangle =0$.  Similarly we have that $\langle Jv,w\rangle =0$, and thus $w\in\Span\{u,v,Ju,Jv\}^{\perp}$.

We then have two cases.  If $\theta_1=0$ then $v=Ju$ so we can set $u=e_1,v=Je_1$ and see that $P=\Span\{e_1,Je_1\}\times Q$ where $Q$ is a $2(k-1)$-plane in
$\C^{n-1}=\Span\{e_1,Je_1\}^{\perp}$.  If $\theta_1\neq 0$ we have that $v=\cos\theta_1 Ju+\sin\theta_1 w$ where $w$ is a unit vector 
orthogonal to $u$ and $Ju$, so we can let $u=e_1$, $w=e_2$ and see that $P=\Span\{e_1,\cos\theta_1 Je_1+\sin\theta_1 e_2\}\times Q$ 
where $Q$ is a $2(k-1)$-plane in $\C^{n-2}=\Span\{e_1,Je_1,e_2,Je_2\}^{\perp}$.  

Proceeding by induction we see that we have an oriented basis $\{e_1,Je_1,\ldots,e_n,Je_n\}$ for $\C^n$ so that
$$P=\Span\{e_1,\cos\theta_1 Je_1+\sin\theta_1 e_2,\ldots,e_{2k-1},\cos\theta_k Je_{2k-1}+\sin\theta_k e_{2k}\},$$ where
$0\leq\theta_1\leq\ldots\leq\theta_{k-1}\leq\frac{\pi}{2}$ and $\theta_{k-1}\leq\theta_k\leq\pi-\theta_{k-1}$.

Since we can write $\omega=\sum_{j=1}^n e^j\wedge Je^j$ we see that $\frac{\omega^k}{k!}$ restricts to $P$ to give a product of $\cos\theta_j$ which is 
certainly less than or equal to $1$.  Moreover, equality holds if and only if all of the $\theta_j=0$ which means that $P$ is complex.
\end{proof}

Putting together Theorem \ref{cxcalib.thm} and Theorem \ref{calibmin.thm} yields the following.

\begin{cor}
$\!\!$Compact complex submanifolds of K\"ahler manifolds are homologically volume-minimizing.
\end{cor}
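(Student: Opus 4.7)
The plan is essentially to observe that the corollary is an immediate combination of the two preceding theorems, so the only work is to verify that their hypotheses are met and to assemble the conclusion cleanly.

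First I would let $(M,J,\omega)$ be a Kähler manifold and $N \subseteq M$ a compact complex submanifold of complex dimension $k$, so that $N$ has real dimension $2k$. By Theorem \ref{cxcalib.thm}, the form $\eta = \omega^k/k!$ is a calibration on $M$, and its calibrated submanifolds are precisely the complex $k$-dimensional submanifolds. Since $N$ is complex of dimension $k$, this means $N$ is calibrated by $\eta$ in the sense of Definition \ref{calibsub.dfn}; that is, $\eta|_N = \vol_N$.

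Next I would apply Theorem \ref{calibmin.thm} directly to $N$ and $\eta$. Since $N$ is compact and calibrated, the theorem yields that $N$ is volume-minimizing in its homology class: for any $N'$ homologous to $N$, we have $\Vol(N) = \int_N \eta = \int_{N'} \eta \leq \Vol(N')$, where the first equality uses the calibrated condition, the second uses $\d\eta = 0$ together with Stokes' theorem on a bounding chain $K$ with $\partial K = N' - N$, and the inequality uses the pointwise bound $\eta(e_1,\ldots,e_{2k}) \leq 1$ from Wirtinger's inequality (Theorem \ref{Wirtinger.thm}).

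There is really no obstacle here beyond citing the two theorems; the single conceptual point to check is that Theorem \ref{cxcalib.thm} genuinely identifies complex submanifolds with $\eta$-calibrated ones, which was established using Wirtinger's inequality in the preceding proof. The corollary is then just a one-line consequence, and the entire proof can be written as: \emph{Compact complex submanifolds of Kähler manifolds are calibrated by $\omega^k/k!$ by Theorem \ref{cxcalib.thm}, hence volume-minimizing in their homology class by Theorem \ref{calibmin.thm}.}
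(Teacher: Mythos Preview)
Your proposal is correct and matches the paper's approach exactly: the paper states the corollary as an immediate consequence of Theorem~\ref{cxcalib.thm} and Theorem~\ref{calibmin.thm}, which is precisely what you do. Your expanded explanation of how Stokes' theorem and Wirtinger's inequality enter is faithful to the argument of Theorem~\ref{calibmin.thm}, but the paper itself simply cites the two theorems without unpacking them again.
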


We know that complex submanifolds are defined by holomorphic functions; i.e.~solutions to the Cauchy--Riemann equations, which are a first-order PDE system, as one would expect for calibrated submanifolds.

\begin{ex}
$N=\{(z,\frac{1}{z})\in\C^2\,:\,z\in\C\setminus\{0\}\}$ is a complex curve in $\C^2$, and thus is calibrated.  
\end{ex}

\begin{ex}
An important non-trivial example of a K\"ahler manifold is $\C\P^n$, where the zero set of a system of polynomial equations defines a (possibly singular) complex submanifold.
\end{ex}

\section{Special Lagrangians}

Complex submanifolds are very familiar, but can we find any other interesting classes of calibrated submanifolds?  The answer is that indeed we can, particularly
when the manifold has special holonomy.  We begin with the case of holonomy $\SU(n)$ -- so-called \emph{Calabi--Yau manifolds}.  The model example for Calabi--Yau manifolds is $\C^n$ with complex structure $J$, K\"ahler form $\omega$ and holomorphic volume form 
$$\Upsilon=\d z_1\wedge\ldots\wedge\d z_n,$$
if $z_1,\ldots,z_n$ are complex coordinates on $\C^n$.

\begin{thm}
Let $M$ be a Calabi--Yau manifold with holomorphic volume form $\Upsilon$.  Then $\Ree(e^{-i\theta}\Upsilon)$ is a calibration for any $\theta\in\R$.
\end{thm}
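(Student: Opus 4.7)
The plan is to check the two conditions in Definition \ref{calibration.dfn} for $\eta := \Ree(e^{-i\theta}\Upsilon)$: namely that $\d\eta = 0$ and that $\eta(e_1,\ldots,e_n) \leq 1$ on every orthonormal $n$-tuple of tangent vectors.

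Closedness is immediate. Since the holonomy of the Calabi--Yau metric lies in $\SU(n)$, which stabilises the model holomorphic volume form, the holonomy principle invoked at the end of Section 2 forces $\Upsilon$ to be parallel for the Levi-Civita connection. In particular $\d\Upsilon=0$, so $\d\eta=0$ for every $\theta\in\R$.

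The substantive content is the pointwise bound. Because $\Ree(e^{-i\theta}\Upsilon)\leq|\Upsilon|$, it is enough to prove the $\theta$-independent inequality $|\Upsilon(e_1,\ldots,e_n)|\leq 1$ at each $p\in M$. Using an $\SU(n)$-frame to identify $T_pM$ isometrically with $\C^n$ so that $\Upsilon$ corresponds to $\d z_1\wedge\cdots\wedge \d z_n$, this becomes a purely linear-algebraic statement on $\C^n$. I would expand each $e_k$ as $a_k+ib_k$ with $a_k,b_k\in\R^n$, assemble the real $n\times n$ matrices $A=(a_{jk})$ and $B=(b_{jk})$, and set $Z=A+iB$. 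Then $\Upsilon(e_1,\ldots,e_n)=\det Z$, while orthonormality reads $A^{\rm T}A+B^{\rm T}B=I_n$, giving
$$Z^*Z=I_n+iK,\qquad K=A^{\rm T}B-B^{\rm T}A,$$
with $K$ real and skew-symmetric.

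The main obstacle is controlling $|\det Z|^2=\det(Z^*Z)$, because the skew-symmetric correction $iK$ means the bound does not follow from $A^{\rm T}A+B^{\rm T}B=I_n$ by itself. To handle it, I would diagonalise $K$ by an orthogonal change of basis on $\R^n$, bringing it to block-diagonal normal form with $2\times 2$ blocks $\bigl(\begin{smallmatrix} 0 & \mu_j \\ -\mu_j & 0 \end{smallmatrix}\bigr)$ (and a single zero on the diagonal if $n$ is odd). The eigenvalues of $iK$ then come in pairs $\pm\mu_j$, so
$$|\det Z|^2=\det(I_n+iK)=\prod_j(1-\mu_j^2).$$
Positive semi-definiteness of $Z^*Z$ forces $|\mu_j|\leq 1$, each factor lies in $[0,1]$, and $|\det Z|\leq 1$ follows, completing the calibration bound. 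As a bonus, equality requires $K=0$, so that $\Span\{e_1,\ldots,e_n\}$ is Lagrangian and $e^{-i\theta}\Upsilon$ is a positive real multiple of $\vol_N$ on it, which foreshadows the description of the calibrated submanifolds in the sequel.
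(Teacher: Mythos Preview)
Your proof is correct, but the route differs from the paper's. Both reduce to the pointwise inequality $|\Upsilon(e_1,\ldots,e_n)|\leq 1$ on $\C^n$; the divergence is in how that bound is obtained. The paper writes the orthonormal frame as $f_j=Ae_j$ for $A\in\GL(n,\C)$ (with $e_j$ the standard real basis), invokes the identity $|\det_{\C}A|^2=|\det_{\R}A|$, and reads the right-hand side as the $2n$-volume $|f_1\wedge Jf_1\wedge\cdots\wedge f_n\wedge Jf_n|$, which is $\leq 1$ by Hadamard's inequality on unit vectors; equality forces $f_1,Jf_1,\ldots,f_n,Jf_n$ orthonormal, i.e.\ the plane is Lagrangian. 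Your argument instead computes $Z^*Z=I_n+iK$ and diagonalises the real skew-symmetric $K$, obtaining $|\det Z|^2=\prod_j(1-\mu_j^2)$. The paper's approach is shorter and more conceptual (one line once the determinant identity is known), while yours is more hands-on but yields extra structure: the $\mu_j$ are the values $\omega(e_j,e_k)$ in a suitable basis, so your factorisation exhibits explicitly how the K\"ahler angles control the defect from equality. Incidentally, your remark that $K=0$ gives the Lagrangian condition is exactly right, since $K_{jk}=\omega(e_j,e_k)$.
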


Since $\d\Upsilon=0$, the result follows immediately from the following.

\begin{thm}\label{Lagcalib.thm}
On $\C^n$, $|\Upsilon(e_1,\ldots,e_n)|\leq 1$ for all unit vectors $e_1,\ldots,e_n$ with equality if and only if $P=\Span\{e_1,\ldots,e_n\}$ is 
a Lagrangian plane, i.e.~$P$ is an $n$-plane such that $\omega|_P\equiv 0$.
\end{thm}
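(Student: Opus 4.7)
The plan is to express $\Upsilon(e_1,\ldots,e_n)$ as the determinant of an $n\times n$ complex matrix and then apply (the complex version of) Hadamard's inequality.

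Identifying $\R^{2n}$ with $\C^n$ in the standard way, for each unit vector $e_j$ form the column vector $v_j\in\C^n$ with entries $(v_j)_k=\d z_k(e_j)=\d x_k(e_j)+i\,\d y_k(e_j)$, and let $M$ be the complex $n\times n$ matrix with columns $v_1,\ldots,v_n$. By multilinearity and antisymmetry of $\Upsilon=\d z_1\wedge\cdots\wedge\d z_n$, one has $\Upsilon(e_1,\ldots,e_n)=\det M$, so the problem reduces to bounding this complex determinant.

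The key observation is that the standard Hermitian inner product of $v_j$ and $v_l$ decomposes cleanly as
$$\sum_{k=1}^n\overline{(v_j)_k}\,(v_l)_k=g(e_j,e_l)+i\,\omega(e_j,e_l),$$
a short computation using $\omega=\sum_k\d x_k\wedge\d y_k$ and the standard Euclidean metric. In particular $\|v_j\|^2=g(e_j,e_j)=1$ since $e_j$ is a unit vector, and Hadamard's inequality gives $|\det M|\leq\prod_j\|v_j\|=1$, which is the required bound.

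For the equality case, Hadamard's inequality is sharp precisely when the columns $v_j$ are mutually Hermitian orthogonal, which by the formula above translates to $g(e_j,e_l)=0=\omega(e_j,e_l)$ for all $j\neq l$; together with the skew-symmetry $\omega(e_j,e_j)=0$ this says $\omega|_P=0$, i.e.~$P=\Span\{e_1,\ldots,e_n\}$ is Lagrangian (and the $e_j$ form a real orthonormal basis of it). Conversely, for any orthonormal basis of a Lagrangian plane, the columns $v_j$ are Hermitian orthonormal in $\C^n$, so $M\in\UU(n)$ and $|\det M|=1$.

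The main obstacle is just setting up the Hermitian inner product computation correctly; once that identity is in hand, the inequality follows at once from Hadamard and the equality analysis is essentially forced. An alternative route would be a canonical-form argument under $\UU(n)$ analogous to the proof of Wirtinger's inequality, but the Hadamard approach is cleaner here as there are no extra angle parameters to track.
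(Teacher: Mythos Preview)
Your proof is correct and close in spirit to the paper's, but the packaging is a bit different and arguably cleaner. The paper also writes $\Upsilon(f_1,\ldots,f_n)=\det_{\C} A$ for a suitable matrix $A$, but then passes to the real $2n\times 2n$ picture via $|\det_{\C}A|^2=|\det_{\R}A|=|f_1\wedge Jf_1\wedge\cdots\wedge f_n\wedge Jf_n|$ and bounds this by $\prod_j|f_j|\,|Jf_j|=1$, deducing that equality forces $f_1,Jf_1,\ldots,f_n,Jf_n$ to be orthonormal, i.e.\ $JP=P^{\perp}$. You instead apply Hadamard directly to the $n\times n$ complex matrix $M$, using the identity $\langle v_j,v_l\rangle_{\C^n}=g(e_j,e_l)+i\,\omega(e_j,e_l)$; this skips the real-determinant detour and makes the link to $\omega|_P=0$ immediate. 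A small bonus of your route is that it literally treats arbitrary unit vectors (not just orthonormal frames), since complex Hadamard only needs the column norms, whereas the paper's argument tacitly starts from an orthonormal basis of $P$. Both arguments are, at bottom, Hadamard's inequality, so the difference is one of bookkeeping rather than of method.
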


\begin{proof}
Let $e_1,\ldots,e_n$ be the standard basis for $\R^n$ and let $P$ be an $n$-plane in $\C^n$.  There exists $A\in\GL(n,\C)$ so that $f_1=Ae_1,\ldots,f_n=Ae_n$ is an orthonormal basis for $P$.  Then 
$\Upsilon(Ae_1,\ldots,Ae_n)=\det_{\C}(A)$ so 
$$|\Upsilon(f_1,\ldots,f_n)|^2=|\dett_{\C}(A)|^2=|\dett_{\R}(A)|=|f_1\wedge Jf_1\wedge\ldots\wedge f_n\wedge J f_n|\leq |f_1||Jf_1|\ldots|f_n||Jf_n|=1$$
with equality if and only if $f_1,Jf_1,\ldots,f_n, Jf_n$ are orthonormal.  However, this is exactly equivalent to the Lagrangian condition, since
$\omega(u,v)=g(Ju,v)$ so $\omega|_P\equiv 0$ if and only if $JP=P^{\perp}$.    
\end{proof}

\begin{dfn}
A submanifold $N$ of $M$ calibrated by $\Ree(e^{-i\theta}\Upsilon)$ is called \emph{special Lagrangian} with phase $e^{i\theta}$.  If $\theta=0$ we say 
that $N$ is simply special Lagrangian.  By Theorem \ref{Lagcalib.thm}, we see that $N$ is special Lagrangian if and only if $\omega|_N\equiv 0$ (i.e.~$N$ is Lagrangian) 
and $\Imm\Upsilon|_N\equiv 0$ (up to a choice of orientation so that $\Ree\Upsilon|_N>0$).
\end{dfn}

\begin{ex}
Consider $\C=\R^2$ with coordinates $z=x+iy$, complex structure $J$ given by $Jw=iw$, K\"ahler form $\omega=\d x\wedge\d y=\frac{i}{2}\d z\wedge\d\overline{z}$ and holomorphic volume form $\Upsilon=\d z=\d x+i\d y$.  We want to consider the special Lagrangians in $\C$, which are 1-dimensional submanifolds or curves $N$ in 
$\C=\R^2$.

Since $\omega$ is a 2-form, it vanishes on any curve in $\C$.  Hence every curve in $\C$ is Lagrangian.  For $N$ to be special Lagrangian with phase $e^{i\theta}$ 
we need that 
$$\Ree(e^{-i\theta}\Upsilon)=\cos\theta\d x+\sin\theta\d y$$
is  the volume form on $N$, or equivalently that 
$$\Imm(e^{-i\theta}\Upsilon)=\cos\theta\d y-\sin\theta\d x$$
vanishes on $N$.  This means that $\cos\theta\partial_x+\sin\theta\partial_y$ is everywhere a unit tangent vector to $N$, so $N$ is a straight line 
given by $N=\{(t\cos\theta ,t\sin\theta )\in\R^2\,:\,t\in\R\}$ (up to translation), so it makes an angle $\theta$ with the $x$-axis, hence motivating the term 
``phase $e^{i\theta}$''.  

Notice that this result is compatible with the fact that special Lagrangians are minimal, and hence must be geodesics in $\R^2$; i.e.~straight lines.
\end{ex}

\begin{ex}
Consider $\C^2=\R^4$.  We know that $\omega=\d x_1\wedge\d y_1+\d x_2\wedge\d y_2$.  Since $\Upsilon=\d z_1\wedge \d z_2=(\d x_1+i\d y_1)\wedge (\d x_2+i\d y_2)$, we also know that $\Ree\Upsilon=\d x_1\wedge\d x_2+\d y_2\wedge\d y_1$, which 
looks somewhat similar.  In fact, if we let $J'$ denote the complex structure given by $J'(\partial_{x_1})=\partial_{x_2}$ and $J'(\partial_{y_2})=\partial_{y_1}$, 
then $\Ree\Upsilon=\omega'$, the K\"ahler form corresponding the complex structure $J'$.  Hence special Lagrangians in $\C^2$ are complex curves for a different 
complex structure.  

In fact, we have a hyperk\"ahler triple of complex structures $J_1,J_2,J_3$, where $J_1=J$ is the standard one and $J_3=J_1J_2=-J_2J_1$ so that $J_1=J_2J_3=-J_3J_2$ and $J_2=J_3J_1=-J_1J_3$, and the corresponding K\"ahler forms are 
$\omega=\omega_1$, $\omega_2$, $\omega_3$ which are orthogonal and the same length with $\Upsilon=\omega_2+i\omega_3$. 
\end{ex}

This shows we should only consider complex dimension 3 and higher to find new calibrated submanifolds.

\begin{ex} Let $f:\R^n\rightarrow \R^n$ be a smooth function and let $N=\text{Graph}(f)\subseteq\R^{2n}=\C^n$. We want to see when $N$ is special Lagrangian.  
 We see that tangent vectors to $N$ are given by 
$$e_1+i\nabla_{e_1}f,\ldots,e_n+i\nabla_{e_n}f.$$
Hence $N$ is Lagrangian if and only if
$$\omega(e_j+i\nabla_{e_j}f,e_k+i\nabla_{e_k}f)=\nabla_{e_k}f_j-\nabla_{e_j}f_k=0$$
for all $j,k$.  Since $\R^n$ is simply connected, this occurs if and only if there exists $F$ such that $f_j=\nabla_{e_j}F$; i.e.~$f=\nabla F$.

Recall that $\Upsilon=\d z_1\wedge\ldots\wedge \d z_n$.
  We know that $N$ is special Lagrangian if and only if $N$ is Lagrangian and $\Imm\Upsilon$ vanishes on $N$.  Now
$$\Upsilon(a_1+ib_1,\ldots,a_n+ib_n)=\dett_{\C}(A+iB)$$
where $A,B$ are the matrices with columns $a_i,b_j$ respectively.  Hence 
$$\Upsilon(e_1+i\nabla_{e_1}\nabla F,\ldots,e_n+i\nabla_{e_n}\nabla F)=\dett_{\C}(I+i\text{Hess}\,F),$$
 where $\Hess F=(\frac{\partial^2 F}{\partial x_i\partial x_j})$.  

Therefore $N=\text{Graph}(f)$ is special Lagrangian (up to a choice of orientation) if and only if $f=\nabla F$ and
$$\Imm\dett_{\C}(I+i\Hess F)=0.$$
 
If $n=2$, 
$$I+i\text{Hess}\,F=\left(\begin{array}{cc} 1+iF_{xx} & iF_{xy} \\ iF_{yx} & 1+iF_{yy}\end{array}\right).$$
Therefore, the determinant gives
$$1-F_{xx}F_{yy}+F_{xy}^2+i(F_{xx}+F_{yy}),$$
then the imaginary part is $F_{xx}+F_{yy}$.  Therefore, $N$ is special Lagrangian if and only if $\Delta F=0$.  

As we know, a graph in $\C^2$ of $f=u+iv:\C\to\C$ is a complex surface if and only if $u+iv$ is holomorphic, which implies that $u,v$ are harmonic.  We know that 
special Lagrangians in $\C^2$ are complex surfaces for a different complex structure, so this is expected.

If $n=3$,
$$I+i\text{Hess}\, F=\left(\begin{array}{ccc} 1+iF_{xx} & i F_{xy} & iF_{xz} \\ iF_{yx} & 1+iF_{yy} & iF_{yz} \\ iF_{zx} & iF_{zy} & 1+iF_{zz} \end{array}\right).$$
Hence, 
\begin{align*}
\Imm\dett_{\C}(I+i\Hess F)=&\;F_{xx}+F_{yy}+F_{zz}\\
&-F_{xx}(F_{yy}F_{zz}-F_{yz}^2)-F_{xy}(F_{yz}F_{zx}-F_{xy}F_{zz})-F_{zx}(F_{xy}F_{yz}-F_{yy}F_{zx}).
\end{align*}
Therefore, $N$ is special Lagrangian if and only if
\begin{align*}
-\Delta F&=F_{xx}+F_{yy}+F_{zz}\\
&=F_{xx}(F_{yy}F_{zz}-F_{yz}^2)-F_{xy}(F_{xy}F_{zz}-F_{yz}F_{zx})+F_{zx}(F_{xy}F_{yz}-F_{yy}F_{zx})\\
&=\dett\Hess F.
\end{align*} 
\end{ex}

We now wish to describe some very important examples of special Lagrangians, which are asymptotic to pairs of planes. 

\begin{ex}
$\SU(n)$ acts transitively on the space of special Lagrangian planes with isotropy $\SO(n)$.  So any special Lagrangian plane is given by $A\cdot\R^n$ for $A\in\SU(n)$ where $\R^n$ is the standard real $\R^n$ in $\C^n$.

Given $\theta=(\theta_1,\ldots,\theta_n)$ we can define a plane $P(\theta)=\{(e^{i\theta_1}x_1,\ldots,e^{i\theta_n}x_n)\in\C^n\,:\,(x_1,\ldots,x_n)\in\R^n\}$
 (where we can swap orientation).  We see that $P(\theta)$  
is special Lagrangian if and only if $\Ree\Upsilon|_P=\pm\cos(\theta_1+\ldots+\theta_n)=1$ so that $\theta_1+\ldots+\theta_n\in\pi\Z$.  Given any 
$\theta_1,\ldots,\theta_n\in(0,\pi)$ with $\theta_1+\ldots+\theta_n=\pi$, there exists a special Lagrangian $N$ (called a \emph{Lawlor neck}) 
asymptotic to $P(0)\cup P(\theta)$: see, for example, \cite[Example 8.3.15]{Joycebook2} or $\S$\ref{s:angle} for details.  It is diffeomorphic to $\mathcal{S}^{n-1}\times\R$.  By rotating coordinates we have a special Lagrangian with phase $i$ 
asymptotic to $P(-\frac{\theta}{2})\cup P(\frac{\theta}{2})$.

The simplest case is when $\theta_1=\ldots=\theta_n=\frac{\pi}{n}$: here $N$ is called the \emph{Lagrangian catenoid}.  When $n=2$, under a coordinate change
the Lagrangian catenoid becomes the complex curve $\{(z,\frac{1}{z})\in\C^2\,:\,z\in\C\setminus\{0\}\}$ that we saw before.  When $n=3$, the only possibilities
 for the angles are $\sum_i\theta_i=\pi,2\pi$, but if $\sum_i\theta_i=2\pi$ we can rotate coordinates and change the order of the planes so that $P(0)\cup P(\theta)$ becomes $P(0)\cup P(\theta')$ where $\sum_i\theta_i'=\pi$.  Hence, given any pair of transverse special Lagrangian planes in $\C^3$, there exists a Lawlor neck 
 asymptotic to their union.
\end{ex}

\begin{remark}  Using complex geometry it is easy to classify all of the smooth special Lagrangians in $\C^2$ asymptotic to a pair of transverse planes, and one sees that the Lawlor necks in $\C^2$ are the unique exact special Lagrangians with this property. 
It is now known that the Lawlor necks are the unique smooth exact special Lagrangian asymptotic to a pair of planes in all dimensions \cite{IJO}.
\end{remark}

We can find special Lagrangians in Calabi--Yau manifolds using the following easy result.

\begin{prop}\label{SLfixedpt.prop}
Let $(M,\omega,\Upsilon)$ be a Calabi--Yau manifold  and let $\sigma:M\rightarrow M$ be such that $\sigma^2=\text{\emph{Id}}$, $\sigma^*(\omega)=-\omega$, $\sigma^*(\Upsilon)=\overline{\Upsilon}$.  Then $\text{Fix}(\sigma)$ is special Lagrangian, if it is non-empty.
\end{prop}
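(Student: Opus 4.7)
The plan is to work pointwise at a fixed point $p \in N := \text{Fix}(\sigma)$ and extract all the special Lagrangian structure from the three algebraic conditions on $\sigma$.

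First I would establish that $N$ is a smooth submanifold near $p$ with $T_pN$ equal to the $(+1)$-eigenspace of $\d\sigma_p$. This follows from the standard linearisation argument for smooth involutions (averaging $\sigma^*$ with the identity produces local coordinates in which $\sigma$ is linear, so its fixed set is a linear subspace). Let $V_{\pm} \subset T_pM$ denote the $\pm 1$ eigenspaces; then $T_pM = V_+ \oplus V_-$ and $T_pN = V_+$.

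Next I would compute the dimension of $V_+$ using $\sigma^*\omega = -\omega$. For $u,v \in V_+$,
$$\omega(u,v) = \omega(\d\sigma_p u, \d\sigma_p v) = (\sigma^*\omega)(u,v) = -\omega(u,v),$$
so $\omega|_{V_+} \equiv 0$, i.e.~$V_+$ is isotropic; the same calculation for $V_-$ shows $V_-$ is isotropic. Since a symplectic form on $T_pM \cong \R^{2n}$ admits isotropic subspaces of dimension at most $n$, while $\dim V_+ + \dim V_- = 2n$, we must have $\dim V_+ = \dim V_- = n$. Thus $N$ has real dimension $n$, and by the first display $\omega|_N \equiv 0$, so $N$ is Lagrangian.

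Finally I would handle the calibration condition. Take an oriented basis $u_1,\ldots,u_n$ of $T_pN = V_+$. Then
$$\Upsilon(u_1,\ldots,u_n) = \Upsilon(\d\sigma_p u_1,\ldots,\d\sigma_p u_n) = (\sigma^*\Upsilon)(u_1,\ldots,u_n) = \overline{\Upsilon(u_1,\ldots,u_n)},$$
so $\Upsilon|_N$ is real, i.e.~$\Imm\Upsilon|_N \equiv 0$. Combined with $\omega|_N \equiv 0$, Theorem \ref{Lagcalib.thm} tells us that at every point of $N$ either $\Ree\Upsilon$ or $-\Ree\Upsilon$ restricts to the volume form; choosing the orientation on $N$ so that $\Ree\Upsilon|_N > 0$ gives $\Ree\Upsilon|_N = \vol_N$, and hence $N$ is special Lagrangian.

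The only mildly subtle step is the dimension count in the second paragraph — everything else is a one-line application of the three defining identities of $\sigma$. The smoothness of $\text{Fix}(\sigma)$ is a standard fact about smooth involutions that I would simply quote rather than prove.
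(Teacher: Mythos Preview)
The paper states this proposition without proof, so there is no argument to compare against. Your proof is correct and is essentially the standard one: the eigenspace decomposition of $\d\sigma_p$ together with $\sigma^*\omega=-\omega$ forces both $V_\pm$ to be $\omega$-isotropic and hence each of dimension exactly $n$, giving the Lagrangian condition; and $\sigma^*\Upsilon=\overline{\Upsilon}$ makes $\Upsilon|_{T_pN}$ real, so $\Imm\Upsilon|_N\equiv 0$. The final appeal to Theorem~\ref{Lagcalib.thm} is the right way to close the argument, since the Lagrangian condition guarantees $|\Upsilon|=1$ on orthonormal frames and reality then forces $\Ree\Upsilon|_N=\pm\vol_N$ on each component.
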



\begin{ex}
Let $X=\{[z_0,\ldots,z_4]\in\C\P^4\,:\,z_0^5+\ldots+z_4^5=0\}$ (the \emph{Fermat quintic}) with its Calabi--Yau structure (which exists  
by Yau's solution of the Calabi conjecture since the first Chern class of $X$ vanishes).  Let $\sigma$ be the restriction of complex conjugation on $\C\P^4$ to $X$.  
Then the fixed point set of $\sigma$, which is the real locus in $X$, is a special Lagrangian 3-fold (if it is non-empty).  (There is a subtlety here: $\sigma$ is certainly an anti-holomorphic isometric involution for the induced metric on $X$, but this is \emph{not} the same as the Calabi--Yau metric on 
$X$.  Nevertheless, it is the case that $\sigma$ satisfies the conditions of Proposition \ref{SLfixedpt.prop}.)
\end{ex}

\begin{ex}
There exists a Calabi--Yau metric on $T^*\mathcal{S}^n$ (the Stenzel metric \cite{Stenzel}) so that the base $\mathcal{S}^n$ is special Lagrangian:   When $n=2$ this is a hyperk\"ahler 
metric called the Eguchi--Hanson metric \cite{EguchiHanson}.
\end{ex}


\section{Constructing calibrated submanifolds}

It is easy to construct complex submanifolds in K\"ahler manifolds algebraically.  Constructing other calibrated submanifolds is much more 
challenging because one needs to solve a nonlinear PDE, even in Euclidean space.  
There are  approaches in Euclidean space and other simple spaces which have involved reducing the problem to ODEs or other problems which do not require PDE (for example, algebraic methods).  For example, 
we have the following methods, which you can find out more about in \cite{Joycebook2} or the references provided.
\begin{itemize}
\item  Symmetries/evolution equations \cite{Goldstein, HL, Haskins.SLcones1, Ionel.Minoo, Joyce.evol, Joyce.quadrics, Joyce.sym, Joyce.U1fib, Joyce.U1a, Joyce.U1b, Joyce.U1c, Lotayevol, Lotaysym}.
\item Use of integrable systems to study calibrated cones \cite{Carberry, Carberry.McIntosh, Haskins.SLcones2, Joyce.int, McIntosh}. 
\item Calibrated cones and ruled smoothings of these cones \cite{BryantOct, Bryant.SL, Fox.coass, Fox.Cayley, Joyce.ruled, Lotayevol,  Lotay2R, LotayLag}. 
\item  Vector sub-bundle constructions \cite{Ionel.Kar.Minoo, Kar.Leung, Kar.Minoo}.
\item Classification of calibrated submanifolds satisfying pointwise constraints on their second fundamental form \cite{Bryant.SL, Fox.thesis, Ionel.SL, LotayLag, Lotayassoc}.
\end{itemize}

However, an important direction which has borne fruit in calibrated geometry and special holonomy recently has been to study the nonlinear PDE head on, especially 
by perturbative and gluing methods.   

%
%

We want to solve nonlinear PDE, so how do we tackle this?  The idea is to use the linear case to help.  Suppose we are on a compact manifold $N$ and recall the 
theory of linear \emph{elliptic} operators $L$ of order $l$ on $N$, including: 
\begin{itemize}
\item the definition of ellipticity of $L$ via the \emph{principal symbol} $\sigma_L$ (which encodes the highest order  derivatives in the operator) being an isomorphism;
\item the use of \emph{H\"older spaces} $C^{k,a}$ to give elliptic regularity theory (so-called \emph{Schauder theory}), namely that if $w\in C^{k,a}$ and 
$Lv=w$ then $v\in C^{k+l,a}$ and there is a universal constant $C$ so that
$$\|v\|_{C^{k+l,a}}\leq C(\|Lv\|_{C^{k,a}}+\|v\|_{C^0})$$
(and we can drop the $\|v\|_{C^0}$ term if $v$ is orthogonal to $\Ker L$);
\item the adjoint operator $L^*$ and that $\sigma_{L^*}=(-1)^l\sigma_L^*$ so that $L^*$ is elliptic if and only if $L$ is elliptic; and
\item the Fredholm theory of $L$, namely that $\Ker L$ (and hence $\Ker L^*$)  is finite-dimensional, and we can solve $Lv=w$ if and only if $w\in (\Ker L^*)^{\perp}$.
\end{itemize}

We shall discuss this in a model example which we shall use throughout this section.

\begin{ex}
The Laplacian on functions is given by $\Delta f=\d^*\d f$ which in normal coordinates at a point is given by $f\mapsto -\sum_i \frac{\partial^2f}{\partial x_i^2}$, so it is a linear second order differential operator.  
We see that its principal symbol is $\sigma_{\Delta}(x,\xi)f= -|\xi|^2f$ which is an isomorphism for $\xi\in T^*_xN\setminus\{0\}$, so $\Delta$ is elliptic.  We therefore have 
that if $h\in C^{k,a}(N)$ and $\Delta f=h$ then $f\in C^{k+2,a}(N)$, and we have an estimate
  $$\|f\|_{C^{k+2,a}}\leq C(\|\Delta f\|_{C^{k,a}}+\|f\|_{C^0}).$$
  We also know that $\Delta^*=\Delta$ and $\Ker\Delta$ is given by the constant functions (since if $f\in\Ker\Delta$ then 
$$0=\langle f,\Delta f\rangle_{L^2}=\langle f,\d^*\d f\rangle_{L^2}=\|\d f\|_{L^2}^2$$
so $\d f=0$).  Hence, we can solve $\Delta f=h$ if and only if $h$ is orthogonal to the constants, i.e.~$\int_N h\vol_N=0$.

  The operator defining the minimal graph equation for a hypersurface is 
$$P(f)=-\Div\left(\frac{\nabla f}{\sqrt{1+|\nabla f|^2}}\right),$$ which is a nonlinear second order operator whose linearisation $L_0P$ at $0$ is $\Delta$.  
Thus $P$ is a nonlinear elliptic operator at $0$.  If we linearise $P$ at $f_0$ we find a more complicated expression depending on $f_0$, but it is 
still a perturbation of the Laplacian.
\end{ex}

Suppose we are on a compact manifold $N$ and we want to solve $P(f)=0$ where $P$ is the minimal graph operator on functions $f$.  Let us consider 
regularity for $f$.
We can re-arrange $P(f)=0$ by taking all of the second derivatives to one side as:
$$R(x,\nabla f(x))\nabla^2 f(x)=E(x,\nabla f(x))$$
where $x\in N$. 
  Since $L_0P=\Delta$ is elliptic and ellipticity is an open condition we know that the operator $L_f$ (depending on $f$) given by
$$L_f(h)(x)=R(x,\nabla f(x))\nabla^2h(x)$$ is a \emph{linear} elliptic operator
whenever $\|\nabla f\|_{C^0}$ is small, in particular if $\|f\|_{C^{1,a}}$ is sufficiently small. The operator $L_f$ does not have smooth coefficients, but 
if $f\in C^{k,a}$ then the coefficients $R\in C^{k-1,a}$.

Suppose that $f\in C^{1,a}$ and $\|f\|_{C^{1,a}}$ is small with $P(f)=0$. Then $L_f(f)=E(f)$ and $L_f$ is a linear \emph{second order} elliptic operator with coefficients in $C^{0,a}$ and 
 $E(f)$ is in $C^{0,a}$.  
 So by  elliptic regularity we can deduce that $f\in C^{2,a}$.  We have gained one degree of regularity, so we can ``bootstrap'', 
i.e.~proceed by induction and deduce that any $C^{1,a}$ solution to $P(f)=0$ is smooth.  

\begin{ex}
$C^{1,a}$-minimal submanifolds (and thus calibrated submanifolds) are \emph{smooth}.  
\end{ex}

\begin{remark}
More sophisticated techniques can be used to deduce that $C^1$-minimal submanifolds are real analytic \cite{Morrey}.  Notice that elliptic regularity results are \emph{not} valid for $C^k$ spaces, so this result is not obvious.
\end{remark}

We can also arrange our simple equation $P(f)=0$ as $\Delta f+Q(\nabla f,\nabla^2f)=0$, where $Q$ is nonlinear but linear in $\nabla^2f$.  
If we know that $\int_N P(f)\vol_N=0$, i.e.~that $P(f)$ is 
orthogonal to the constants, then
 we can always solve $\Delta f_0=-Q(\nabla f,\nabla^2f)$.  We do know that $\int_NP(f)\vol_N=0$ since $P$ has a divergence form.  This means we are in the setting 
for implementing the Implicit Function Theorem for Banach spaces to conclude that we can always solve $P(f)=0$ for some $f$ near $0$, and $f$ will be smooth by our regularity argument above. In general, we will use the following.

\begin{thm}[Implicit Function Theorem] Let $X,Y$ be Banach spaces, let $U\ni 0$ be open in $X$, let $P:U\rightarrow Y$ with $P(0)=0$ and $L_0P:X\rightarrow Y$  surjective with finite-dimensional  kernel $K$.  

Then for some $U$, 
$P^{-1}(0)=\{u\in U\,:\, P(u)=0\}$ is a manifold of dimension $\dim K$.
Moreover, if we write $X=K\oplus Z$, $P^{-1}(0)=\text{\emph{Graph}}\, G$ for some map $G$ from an open set in $K$ to $Z$ with $G(0)=0$.
\end{thm}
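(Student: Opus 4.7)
The plan is to reduce this to the classical Inverse Function Theorem on Banach spaces and then read off the graph statement from the local inverse. The first step is to fix a topological decomposition $X = K \oplus Z$ with $Z$ closed; this is where finite-dimensionality of $K$ is essential, since any finite-dimensional subspace of a Banach space is complemented (pick a basis $k_1,\ldots,k_m$ of $K$, use Hahn--Banach to produce continuous linear functionals $\lambda_i$ with $\lambda_i(k_j) = \delta_{ij}$, and set $Z = \bigcap_i \Ker \lambda_i$). With this decomposition in hand, the restriction $A := L_0P|_Z : Z \to Y$ is injective because $Z \cap K = \{0\}$, and surjective because $L_0P$ is surjective with $L_0P(K) = 0$. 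The Open Mapping Theorem then upgrades $A$ to a Banach space isomorphism.

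Next, I would bundle the problem into an invertible one. Define
$$\Phi : U \subseteq K \oplus Z \longrightarrow K \oplus Y, \qquad \Phi(k,z) = (k, P(k+z)).$$
The derivative at $0$ is $L_0\Phi(k,z) = (k, A(z))$, which is a Banach space isomorphism since $A$ is. Applying the classical Banach space Inverse Function Theorem to $\Phi$ produces open neighbourhoods $U' \subseteq U$ of $0$ in $X$ and $V \subseteq K \oplus Y$ of $0$, together with a smooth inverse $\Psi : V \to U'$ with $\Psi(0) = 0$.

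Now I would read off the conclusion. By construction, $P(k+z) = 0$ if and only if $\Phi(k,z) \in K \oplus \{0\}$, so
$$P^{-1}(0) \cap U' = \Psi\bigl((K \oplus \{0\}) \cap V\bigr).$$
Let $W \subseteq K$ be the open set $\{k \in K : (k,0) \in V\}$ and define $\widetilde\Psi : W \to U'$ by $\widetilde\Psi(k) = \Psi(k,0)$. Since the $K$-component of $\Phi(k,z)$ is $k$, the $K$-component of $\Psi(k,0)$ is $k$ as well, so we may write $\widetilde\Psi(k) = k + G(k)$ with $G : W \to Z$ smooth and $G(0) = 0$. This exhibits $P^{-1}(0) \cap U'$ as the graph of $G$, and in particular as a smooth submanifold of dimension $\dim K$.

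The substantive analytical content all sits inside the Banach space Inverse Function Theorem applied to $\Phi$, whose proof is the standard one: rewrite $\Phi(u) = v$ as the fixed point equation $u = u - (L_0\Phi)^{-1}(\Phi(u) - v)$, check that for $v$ in a sufficiently small ball this map is a contraction on a small closed ball in $X$ (using continuity of $u \mapsto L_uP - L_0P$), and apply the Banach Fixed Point Theorem; smoothness of the inverse then follows from the chain rule. The only place where one must be a little careful is the complementation step for $K$, since without finite-dimensionality one cannot in general split $X = K \oplus Z$ as a topological direct sum, and the reduction to the Inverse Function Theorem breaks down.
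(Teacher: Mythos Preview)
Your argument is correct and is the standard reduction of the Implicit Function Theorem to the Inverse Function Theorem: complement the finite-dimensional kernel using Hahn--Banach, observe that the restricted linearisation is an isomorphism by the Open Mapping Theorem, augment $P$ to an invertible map $\Phi(k,z)=(k,P(k+z))$, and read off the graph structure from the local inverse. The only implicit hypothesis you rely on that is not explicit in the statement is that $P$ is at least $C^1$ (so that the Inverse Function Theorem applies and $G$ is smooth), but this is clearly intended.

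As for comparison: the paper does not supply a proof of this statement at all. The Implicit Function Theorem is quoted as a classical black-box result and then applied in the subsequent deformation theory arguments (for special Lagrangians, coassociatives, and the Kuranishi-type setup for associatives). So there is nothing in the paper to compare your argument against; you have simply filled in a standard proof that the author chose to omit.
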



This gives us a way to describe all perturbations of a given 
calibrated submanifold, as we now see in the special Lagrangian case, due to McLean \cite{McLean}.

\begin{thm}\label{SLdef.thm}
Let $N$ be a compact special Lagrangian in a Calabi--Yau manifold $M$.  Then the moduli space of deformations of $N$ is a smooth manifold of dimension 
$b^1(N)$.
\end{thm}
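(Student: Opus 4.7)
The plan is to parameterise nearby submanifolds of $M$ by $1$-forms on $N$, express the special Lagrangian condition as a nonlinear PDE on such forms, and then apply the Implicit Function Theorem exactly as motivated earlier for the minimal graph equation. First, the Weinstein Lagrangian neighbourhood theorem identifies a tubular neighbourhood of $N$ in $M$ symplectomorphically with a neighbourhood of the zero section in $T^*N$ equipped with its canonical symplectic form. Every $C^1$-close submanifold is then the graph $\Gamma_\alpha\subset T^*N$ of a small $1$-form $\alpha\in\Gamma(T^*N)$, and pulling back $\omega$ via the graph map yields $\d\alpha$, so $\Gamma_\alpha$ is Lagrangian exactly when $\d\alpha=0$.

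I would then define
$$F:U\subseteq C^{k+1,a}(T^*N)\to C^{k,a}(\Lambda^2T^*N)\oplus C^{k,a}(\Lambda^nT^*N),\qquad F(\alpha)=\bigl(\omega|_{\Gamma_\alpha},\,\Imm\Upsilon|_{\Gamma_\alpha}\bigr),$$
so that $F^{-1}(0)$ is exactly the set of nearby special Lagrangian deformations of $N$. To linearise at $\alpha=0$, I would use the isomorphism between normal vector fields $v$ to $N$ and $1$-forms on $N$ given by $v\mapsto \iota_v\omega|_N$ (an isomorphism because $N$ is Lagrangian), and verify in the flat Calabi--Yau model $N=\R^n\subset\C^n$ the pointwise identity $\iota_v\Imm\Upsilon|_N=-{*}\alpha$. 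Combining this with Cartan's formula and $\d\omega=\d\Imm\Upsilon=0$ gives
$$L_0F(\alpha)=\bigl(\d\alpha,\,-\d{*}\alpha\bigr),$$
whose kernel is the space of closed and coclosed $1$-forms, namely the harmonic $1$-forms $\mathcal{H}^1(N)$, of dimension $b^1(N)$ by Hodge theory.

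The main obstacle is showing $L_0F$ is surjective onto the correct target, since the operator $(\d,\d^*):\Omega^1\to\Omega^2\oplus\Omega^0$ is overdetermined. I would resolve this cohomologically: $\omega|_{\Gamma_\alpha}=\d\alpha$ is automatically exact, and $\Imm\Upsilon|_{\Gamma_\alpha}$ is closed with cohomology class invariant under deformation of the embedding, hence equal to $[\Imm\Upsilon|_N]=0$ and therefore also exact. Restricting the target of $F$ to exact $2$-forms and exact $n$-forms, the Hodge decomposition $\Omega^1=\mathcal{H}^1\oplus \d\Omega^0\oplus \d^*\Omega^2$ shows $L_0F$ is surjective onto this restricted target. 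The Implicit Function Theorem for Banach spaces then presents $F^{-1}(0)$ as a smooth Banach submanifold of dimension $\dim\mathcal{H}^1(N)=b^1(N)$, and the elliptic bootstrap argument of the previous section, applied to the quasi-linear elliptic equation underlying $F$, confirms smoothness of all solutions.
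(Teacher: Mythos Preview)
Your proposal is correct and follows essentially the same route as the paper: parameterise nearby submanifolds by $1$-forms via the Lagrangian neighbourhood theorem, compute the linearisation as $(\d\alpha,\d^*\alpha)$ (you keep the second component as the $n$-form $-\d{*}\alpha$ rather than applying $*$ to get a function, which is immaterial), restrict the target using the cohomological observation that $[\Imm\Upsilon|_{\Gamma_\alpha}]=0$, and prove surjectivity onto this restricted target before invoking the Implicit Function Theorem and elliptic regularity. The only cosmetic difference is that the paper establishes surjectivity by explicitly solving $\Delta f=\d^*(\gamma-\beta)$, whereas you appeal directly to the Hodge decomposition $\Omega^1=\mathcal{H}^1\oplus\d\Omega^0\oplus\d^*\Omega^2$; these are of course equivalent.
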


\begin{remark}
One should compare this result to the deformation theory for complex submanifolds in K\"ahler manifolds.  There, one does not get that the moduli space is 
a smooth manifold: in fact, it can be singular, and one has \emph{obstructions} to deformations.  It is somewhat remarkable that special Lagrangian calibrated 
geometry enjoys a much better deformation theory than this classical calibrated geometry.  The deformation theory of embedded compact complex submanifolds in Calabi--Yau manifolds has recently been revisited using analytic techniques \cite{Moore.cpt}.  
\end{remark}

\begin{proof}
The tubular neighbourhood theorem gives us a diffeomorphism $\exp:S\subseteq\nu(N)\rightarrow T\subseteq M$ which maps the zero section to $N$; in other words, we can write any nearby submanifold to $N$ as the graph of a normal vector field on $N$.  We know that  $N$ is Lagrangian, so the complex structure $J$ gives an isomorphism 
between $\nu(N)$ and $TN$ and the metric gives an isomorphism between $TN$ and $T^*N$: $v\mapsto g(Jv,.)=\omega(v,.)=\alpha_v$. Therefore any deformation of $N$ in $T$ is given as the graph of a 1-form. 
%
In fact, using the Lagrangian neighbourhood theorem, we can arrange that any $N'\in T$ is the graph of a 1-form $\alpha$, so that if $f_\alpha:N\rightarrow N_{\alpha}$ is the natural diffeomorphism then
$$f_{\alpha}^*(\omega)=\d\alpha\quad\text{and}\quad -*f_{\alpha}^*(\Imm\Upsilon)=F(\alpha,\nabla\alpha)=\d^*\alpha+Q(\alpha,\nabla\alpha),$$
where the second formula follows from a calculation using the special Lagrangian condition on $N$ and the fact that the ambient structure is Calabi--Yau.
Hence, $N_{\alpha}$ is special Lagrangian if and only if $P(\alpha)=(F(\alpha,\nabla\alpha),\d\alpha)=0$.  This means that infinitesimal special Lagrangian
 deformations are given by closed and coclosed 1-forms, which is the kernel of $L_0P$.

Since $\Imm\Upsilon=0$ on $N$ we have that $[\Imm\Upsilon]=0$ on $N_\alpha$, which means that $f_{\alpha}^*(\Imm\Upsilon)$ is exact. Thus $F(\alpha,\nabla\alpha)=-*f_{\alpha}^*(\Imm\Upsilon)$ is coexact and so
$$P:C^{\infty}(S)\rightarrow \d^*(C^{\infty}(T^*N))\oplus\d(C^{\infty}(T^*N))\subseteq C^{\infty}(\Lambda^0T^*N\oplus\Lambda^2T^*N).$$
If we let $X=C^{1,a}(T^*N)$, $Y=\d^*(C^{1,a}(T^*N))\oplus\d(C^{1,a}(T^*N))$ and $U=C^{1,a}(S)$ we can apply the Implicit Function Theorem if we know that 
$$L_0P:\alpha\in X\mapsto (\d^*\alpha,\d\alpha)\in Y$$
is surjective, i.e.~given $\d\beta+\d^*\gamma\in Y$ does there exist $\alpha$ such that $\d\alpha=\d\beta$ and $\d^*\alpha=\d^*\gamma$?  If we let 
$\alpha=\beta+\d f$ then we need $\Delta f=\d^*\d f=\d^*(\gamma-\beta)$.  Since $$\int_N\d^*(\gamma-\beta)\vol_N=\pm\int_N\d*(\gamma-\beta)=0$$ we can solve the equation for $f$, 
and hence $L_0P$ is 
surjective.

Therefore $P^{-1}(0)$ is a manifold of dimension $\dim\Ker L_0P=b^1(N)$ by Hodge theory.  Moreover, if $P(\alpha)=0$ then $N_{\alpha}$ is special Lagrangian, 
hence minimal and since $\alpha\in C^{1,a}$ we deduce that $\alpha$ is in fact smooth.
\end{proof}

\begin{ex}
The special Lagrangian $\mathcal{S}^n$ in $T^*\mathcal{S}^n$ has $b^1=0$ and so is rigid.
\end{ex}

Observe that if we have a special Lagrangian $T^n$ in $M$ then $b^1(T^n)=n$ and, if the torus is close to flat then its deformations locally foliate
 $M$ (as there will be $n$ nowhere vanishing harmonic 1-forms), so we can hope to find special Lagrangian torus fibrations.  This cannot happen in compact manifolds without singular fibres,
but still motivates the SYZ conjecture in Mirror Symmetry. The deformation result also  motivates the following theorem \cite{Bryant.embed}.

\begin{thm}
Every compact oriented real analytic Riemannian 3-manifold can be isometrically embedded in a Calabi--Yau 3-fold as the fixed point set of an involution.
\end{thm}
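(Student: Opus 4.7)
The plan is to construct the ambient Calabi--Yau 3-fold $M$ as a tubular neighbourhood of $N$ inside its own complexification, in such a way that complex conjugation becomes the required involution; Proposition \ref{SLfixedpt.prop} then identifies $N=\text{Fix}(\sigma)$ as a special Lagrangian and the induced metric will be $g$ by construction.

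The first step is complexification. I would cover $N$ by real analytic coordinate charts $(U_\alpha,x_\alpha^1,x_\alpha^2,x_\alpha^3)$, set $z_\alpha^j=x_\alpha^j+iy_\alpha^j$, and extend the (real analytic) transition functions holomorphically. Patching yields a complex 3-manifold $M$ containing $N$ as a totally real submanifold, together with the antiholomorphic involution $\sigma:x+iy\mapsto x-iy$ whose fixed locus is exactly $N$. In the same way, the metric $g$ extends to a holomorphic symmetric $2$-tensor $g_\C$ on a neighbourhood of $N$ in $M$; using the given orientation I can take a holomorphic square root of its determinant to build a closed holomorphic $3$-form $\Upsilon$ satisfying $\Upsilon|_N=\vol_g$ and $\sigma^*\Upsilon=\overline{\Upsilon}$.

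The central, and hardest, step is to produce the K\"ahler form $\omega$. Along $N$ the requirements that $\omega$ be of type $(1,1)$, that $N$ be Lagrangian ($\omega|_{T_xN}\equiv 0$), and that the associated Hermitian metric $g_\omega(\cdot,\cdot)=\omega(\cdot,J\cdot)$ restricted to $T_xN$ agree with $g_x$ together determine $\omega$ pointwise on $T_xM=T_xN\oplus JT_xN$. This furnishes real analytic, non-characteristic Cauchy data on $N$ for the exterior differential system
\[
\d\omega=0,\qquad \omega^3=c\,\Upsilon\wedge\overline{\Upsilon},
\]
where $c$ is the Calabi--Yau normalisation constant. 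Applying the Cartan--K\"ahler theorem (equivalently, Cauchy--Kovalevskaya to the Monge--Amp\`ere equation written for a local K\"ahler potential) I obtain a real analytic $\omega$ solving the system on a neighbourhood of $N$. Positivity of $\omega$ persists by continuity from the positive data on $N$, and uniqueness in the Cauchy problem forces $\sigma^*\omega=-\omega$, because $-\sigma^*\omega$ solves the same initial value problem with the same initial data. Compactness of $N$ allows the local neighbourhoods to be patched into a single tubular neighbourhood, and Proposition \ref{SLfixedpt.prop} finishes the proof.

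The main obstacle is this Cartan--K\"ahler existence step: one must verify that $N$ is non-characteristic for the system and that the EDS is involutive in Cartan's sense, so that real analyticity genuinely yields a real analytic solution; this is the technical heart of Bryant's argument. A secondary subtlety is the consistency check that the pointwise-prescribed $\omega|_N$ already satisfies the algebraic Calabi--Yau constraint $\omega^3=c\,\Upsilon\wedge\overline{\Upsilon}$ on $N$, which reduces to observing that an oriented $g$-orthonormal frame on $T_xN$ extends, via $J$, to a unitary frame for $g_\omega$ compatible with $\Upsilon|_N=\vol_g$.
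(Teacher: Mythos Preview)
The paper does not actually give a proof of this theorem; it is simply stated and attributed to Bryant \cite{Bryant.embed}, with the preceding deformation result (Theorem \ref{SLdef.thm}) offered only as motivation. So there is no ``paper's own proof'' against which to compare your attempt.

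That said, your outline is essentially the strategy Bryant employs: complexify $N$ to obtain a complex manifold $M$ with an antiholomorphic involution $\sigma$ fixing $N$, extend the metric holomorphically to build the holomorphic volume form $\Upsilon$, and then invoke the Cartan--K\"ahler theorem to solve the exterior differential system for a closed real $(1,1)$-form $\omega$ satisfying the Monge--Amp\`ere constraint $\omega^3=c\,\Upsilon\wedge\overline{\Upsilon}$ with the prescribed Cauchy data along $N$. You have also correctly located the genuine work: verifying involutivity of the EDS and that $N$ is non-characteristic, so that Cartan--K\"ahler actually applies. Your use of uniqueness for the Cauchy problem to force $\sigma^*\omega=-\omega$ is the right mechanism for obtaining the involution compatibility. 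As a sketch this is sound; to turn it into a proof you would need to carry out the Cartan--K\"ahler analysis explicitly, which is where Bryant's paper does the real work.
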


\begin{remark} Theorem \ref{SLdef.thm} has also been extended to certain non-compact, singular and boundary settings, for example in \cite{Butscherdef, JoyceCS2, Pacini1}.
\end{remark}


Another well-known way to get a solution of a linear PDE from two solutions is simply to add them.  However, for  a nonlinear PDE $P(v)=0$ this will not 
work.  Intuitively, we can try to add two solutions to give us a solution $v_0$ for which $P(v_0)$ is small.  
Then we may try to perturb $v_0$ by $v$ to solve $P(v+v_0)=0$.  

 Geometrically, this occurs when we have two calibrated submanifolds $N_1,N_2$ and then glue them 
together to give a submanifold $N$ which is ``almost'' calibrated, then we deform $N$ to become calibrated.  If the two submanifolds $N_1,N_2$ are glued using a very long neck then one can imagine that $N$ is 
almost the disjoint union of $N_1,N_2$ and so close to being calibrated.  If instead one scales $N_2$ by a factor $t$ and then glues it into a singular point of 
$N_1$, we can again imagine that as $t$ becomes very small $N$ resembles $N_1$ and so again is close to being calibrated.  These two examples are in fact related, 
because if we rescale the shrinking $N_2$ to fixed size, then we get a long neck between $N_1$ and $N_2$ of length of order $-\log t$.  However, although these 
pictures are appealing, they also reveal the difficulty in this approach: as $t$ becomes small, $N$ 
becomes more ``degenerate'', giving rise to analytic difficulties which are encoded in the geometry of $N_1,N_2$ and $N$.

These ideas are used extensively in geometry, and particularly successfully in calibrated geometry e.g.~\cite{Butscher, HaskinsKapouleas, JoyceCS5, JoyceCS3, JoyceCS4, YILee, Lotaydesing, Lotaydesing2, Pacini2}.  A particular simple case is the following, which we will describe to show the basic idea of the gluing method.

\begin{thm}
Let $N$ be a compact connected 3-manifold and let $i:N\rightarrow M$ be a special Lagrangian immersion with tranverse self-intersection points in a Calabi--Yau
manifold $M$.  Then there exist embedded special Lagrangians $N_t$ such that $N_t\rightarrow N$ as $t\rightarrow 0$.
\end{thm}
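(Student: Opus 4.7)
The plan is to desingularize the immersion $i\colon N\to M$ by gluing in a small rescaled Lawlor neck at each self-intersection point, and then to perturb the resulting approximately special Lagrangian submanifold to a genuine one using a quantitative implicit function theorem in the spirit of the proof of Theorem \ref{SLdef.thm}.

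First I would set up the model at each self-intersection point. At a transverse self-intersection $p\in M$, the two branches of $i(N)$ through $p$ give a pair of transverse special Lagrangian 3-planes $\Pi_1,\Pi_2$ in $T_pM\cong\C^3$ (with the flat Calabi--Yau structure pulled back by a Darboux-type chart). By $\SU(3)$ action we may write this pair, after a sign choice on one plane, in the form $P(0)\cup P(\theta)$ with $\theta_1+\theta_2+\theta_3=\pi$, so the earlier example provides a smooth exact special Lagrangian Lawlor neck $L\subset\C^3$, diffeomorphic to $\SS^2\times\R$, asymptotic to $\Pi_1\cup\Pi_2$. Let $tL$ denote the rescaling by $t>0$, which is again special Lagrangian and asymptotic to the same planes but converging to them outside any fixed neighbourhood of the origin as $t\to0$. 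Using a cut-off function supported in an annulus of radii comparable to $t^{\alpha}$ for some $\alpha\in(0,1)$, I would excise small discs of $i(N)$ around each self-intersection and splice in the corresponding rescaled neck through the Darboux chart, to produce a smooth embedded compact 3-submanifold $\tilde N_t\subset M$ which coincides with $i(N)$ away from the gluing regions and with $tL$ near each resolved point.

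Next I would show that $\tilde N_t$ is approximately special Lagrangian and apply a perturbative argument. By the Lagrangian neighbourhood theorem, special Lagrangian deformations of $\tilde N_t$ are parametrised by 1-forms $\alpha$ solving
\[
P_t(\alpha)=(\d^*\alpha+Q_t(\alpha,\nabla\alpha),\,\d\alpha)+e_t=0,
\]
where $e_t=P_t(0)$ encodes the failure of $\tilde N_t$ to be special Lagrangian; this error is supported in the gluing annuli and, because both $i(N)$ and $tL$ are exactly special Lagrangian, can be estimated by the size of the cut-off derivatives, giving a bound of the form $\|e_t\|\leq Ct^{\beta}$ in a suitable weighted H\"older space. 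The linearisation at $0$ is $L_0P_t\colon\alpha\mapsto(\d^*\alpha,\d\alpha)$, and I would use weighted H\"older spaces $C^{k,a}_{\delta}$ on the neck regions (essentially conformally rescaling so that the neck looks uniform, as in McLean's argument adapted to a manifold with long necks) to obtain uniform Fredholm estimates. Since the topology of the gluing is fixed and $\SS^2\times\R$ has no harmonic 1-forms of the required decay, a standard parametrix patching between the $i(N)$ part and the neck part should produce a right inverse to $L_0P_t$ with norm bounded by $Ct^{-\gamma}$ for some $\gamma<\beta$.

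Finally, with the linear right inverse and the quadratic estimate $\|Q_t(\alpha,\nabla\alpha)-Q_t(\alpha',\nabla\alpha')\|\leq C(\|\alpha\|+\|\alpha'\|)\|\alpha-\alpha'\|$ in hand, a contraction mapping argument (equivalently the Banach-space implicit function theorem) produces a solution $\alpha_t$ with $\|\alpha_t\|\to 0$ as $t\to0$, yielding an embedded special Lagrangian $N_t$; the elliptic bootstrap outlined earlier shows $\alpha_t$ is smooth, and embeddedness is preserved for $t$ small by the smallness of $\alpha_t$. Convergence $N_t\to i(N)$ as currents (or in the appropriate Hausdorff sense) follows because $\tilde N_t$ converges to $i(N)$ and the correction $\alpha_t$ tends to zero. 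The main obstacle is the third step: establishing a uniform right inverse of $L_0P_t$ with the correct weight $\gamma<\beta$ as the neck degenerates; this requires a careful choice of weighted norms on the three pieces (the resolved $N$, the transition annuli, and the rescaled Lawlor neck) and an analysis of the small eigenvalues of the Hodge Laplacian on $\tilde N_t$, which is precisely where the geometry of $N$, the angles $\theta$ and the Lawlor neck $L$ all enter the analytic estimate.
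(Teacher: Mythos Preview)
Your outline is essentially the paper's proof: glue rescaled Lawlor necks at the self-intersection points to get an approximate solution, then close up by a contraction-mapping argument in weighted spaces, with the main work being the uniform invertibility of the linearisation as $t\to 0$.

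There is one simplification in the paper worth noting against your setup. The paper first arranges the glued manifold $S_t$ to be \emph{exactly} Lagrangian (a Lagrangian connect sum), which you implicitly need anyway to invoke the Lagrangian neighbourhood theorem; you should say this. More importantly, once $S_t$ is Lagrangian the paper restricts to \emph{exact} deformations $\alpha=\d f$, so that $\d\alpha=0$ automatically and the problem reduces to the single scalar equation $\Delta_t f=-P_t(0)-Q_t(\nabla f,\nabla^2 f)$ for a perturbed Laplacian on functions. This buys a lot: the kernel and cokernel are just the constants, and connectedness of the glued manifold (which is why the hypothesis ``connected'' appears) makes these one-dimensional, so the only obstruction is $\int_{S_t}P_t(0)\,\vol$, which is handled by the divergence form of the operator. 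By contrast, your linearisation $(\d^*,\d)$ on $1$-forms forces you to track the full Hodge cohomology of $\tilde N_t$ (in particular $b^1$ changes under the surgery), and your remark that $\mathcal{S}^2\times\R$ has no relevant harmonic $1$-forms is the right ingredient but you then also need to match with $H^1$ of the punctured $N$. Both routes work; the paper's scalar reduction is cleaner and explains transparently why connectedness matters.
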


\begin{remark}
One might ask about the sense of convergence here: for definiteness, we can say that $N_t$ converges to $N$ in the sense of currents; that is, if we have any 
compactly supported 3-form $\chi$ on $M$ then $\int_{N_t}\chi\rightarrow\int_N\chi$ as $t\rightarrow 0$.  However, all sensible notions of convergence of 
submanifolds will be true in this setting.
\end{remark}

\begin{proof}  Here we only provide a sketch of the proof: see, for example, \cite[$\S$9]{JoyceCS5} for a detailed proof.

At each self-intersection point of $N$ the tangent spaces are a pair of transverse 3-planes, which we can view as a pair of tranverse special Lagrangian 3-planes $P_1,P_2$ in $\C^3$.  Since 
we are in dimension 3, we know that there exists a (unique up to scale) special Lagrangian Lawlor neck $L$ asymptotic to $P_1\cup P_2$.  We can then glue $tL$ into $N$ near 
each intersection point to get a compact embedded  submanifold $S_t=N\# tL$ (if we glue in a Lawlor neck for every self-intersection point).  We can also arrange that $S_t$ is Lagrangian, 
i.e.~that it is a Lagrangian connect sum.

Now we want to perturb $S_t$ to be special Lagrangian.  Since $S_t$ is Lagrangian, by the deformation theory we can write any nearby submanifold as the graph of a 1-form $\alpha$, and 
this graph will be special Lagrangian if and only if (using the same notation as in our deformation theory discussion) $$P_t(\alpha)=(-*f_{\alpha}^*(\Imm\Upsilon),f_\alpha^*(\omega))=0.$$ 
 Since $S_t$ is Lagrangian but not special Lagrangian we have that
$$f_\alpha^*(\omega)=\d\alpha\quad\text{and}\quad -*f_{\alpha}^*(\Imm\Upsilon)=P_t(0)+\d_t^*\alpha+Q_t(\alpha,\nabla\alpha)$$
where $P_t(0)=-*\Im\Upsilon|_{S_t}$ and $\d_t^*=L_0P_t$, which is a perturbation of the usual $\d^*$ since we are no longer linearising at a point where $P_t(0)=0$.  By choosing $\alpha=\d f$, we then have to solve $$\Delta_tf=-P_t(0)-Q_t(\nabla f,\nabla^2 f)$$
where $\Delta_t$ is a perturbation of the Laplacian.

For simplicity, let us suppose that $\Delta_t$ is the Laplacian on $S_t$. 
The idea is to view our equation as a fixed point problem. We know that if we let $X^k=\{f\in C^{k,a}(N)\,:\,\int_Nf\vol_N=0\}$ then  
$\Delta_t: X^{k+2}\rightarrow X^k$
 is an isomorphism so it has an inverse
$G_t$.  We know by our elliptic regularity result that there exists a constant $C(\Delta_t)$ such that
$$\|f\|_{C^{k+2,a}} \leq C(\Delta_t)\|\Delta_tf\|_{C^{k,a}} \Leftrightarrow \|G_th\|_{C^{k+2,a}}\leq C(\Delta_t)\|h\|_{C^{k,a}}$$
for any $f\in X^{k+2}$, $h\in X^k$.  

We thus see that $P_t(f)=0$ for $f\in X^{k+2}$ if and only if
$$f = G_t(- P_t(0) - Q_t(f))= F_t(f).$$
The idea is now to show that $F_t$ is a contraction sufficiently near $0$ for all $t$ small enough.  Then it will have a (unique) fixed point near $0$, which 
will also be smooth because it satisfies $P_t(f)=0$ and hence defines a special Lagrangian as the graph of $\d f$ over $S_t$.

We know that $F_t:X^{k+2}\rightarrow X^{k+2}$ with
\begin{align*}
\|F_t(f_1) - F_t(f_2)\|_{C^{k+2,a}} &= \|G_t(Q_t(f_1) - Q_t(f_2))\|_{C^{k+2,a}}\leq C(\Delta_t)\|Q_t(f_1) - Q_t(f_2)\|_{C^{k,a}}.
\end{align*}
Since $Q_t$ and its first derivatives vanish at $0$ we know that
$$\|Q_t(f_1) - Q_t(f_2)\|_{C^{k,a}} \leq C(Q_t)\|f_1 - f_2\|_{C^{k+2,a}} (\|f_1\|_{C^{k+2,a}} + \|f_2\|_{C^{k+2,a}} ).$$
We deduce that
\[\|F_t(f_1)-F_t(f_2)\|_{C^{k+2,a}}\leq C(\Delta_t)C(Q_t)\|f_1-f_2\|_{C^{k+2,a}}(\|f_1\|_{C^{k+2,a}} + \|f_2\|_{C^{k+2,a}} )\]
and
\[\|F_t(0)\|_{C^{k+2,a}}=\|G_t(P_t(0))\|_{C^{k+2,a}}\leq C(\Delta_t)\|P_t(0)\|_{C^{k,a}}.\]
Hence, $F_t$ is a contraction on $\overline{B_{\epsilon_t}(0)} \subseteq X^{k+2}$ if we can choose $\epsilon_t$ so that 
$$2C(\Delta_t)\|P_t(0)\|_{C^{k,a}} \leq  \epsilon_t \leq \frac{1}{4C(\Delta_t)C(Q_t)}.$$
(This also proves Theorem \ref{SLdef.thm}, where we used the Implicit Function Theorem, by hand since there $P_t(0) = P(0) = 0$ so
we just need to take $\epsilon_t$ small enough.)
In other words, we need that
\begin{itemize}
\item $P_t(0)$ is small, so $S_t$ is ``close'' to being calibrated and is a good approximation to $P_t(f) = 0$;
\item $C(\Delta_t),C(Q_t)$, which are determined by the linear PDE and geometry of $N,L$ and $S_t$, are well-controlled as $t\rightarrow 0$. 
\end{itemize}
The statement of the theorem is then that there exists $t$ sufficiently small and $\epsilon_t$ so that the contraction mapping argument works.
  
This is a delicate balancing act since as $t\rightarrow 0$ parts of the manifold are collapsing, so the constants $C(\Delta_t),C(Q_t)$ above (which depend on $t$) can 
and typically do blow-up as $t\rightarrow 0$.  To control this, we need to understand the Laplacian on $N,L$ and $S_t$  and 
introduce ``weighted'' Banach spaces so that $tL$ gets rescaled to constant size
 (independent of $t$), and $S_t$ resembles the union of two manifolds with a cylindrical neck (as we described earlier).   It is also crucial to understand the relationship between the kernels and cokernels of the Laplacian 
on the \emph{non-compact} $N$ (with the intersection points removed), $L$ and compact $S_t$: here is where connectedness is important so that the kernel and cokernel of the Laplacian is 1-dimensional.
\end{proof}

\begin{remark}
In more challenging gluing problems it is not possible to show that the relevant map is a contraction, but rather one can instead 
appeal to an alternative  theorem (e.g.~Schauder fixed point theorem) to show that it still has a fixed point.
\end{remark}

\section{Associative and coassociative submanifolds}

We now want to introduce our calibrated geometry associated with $\GG_2$ holonomy.  The first key result is the following.

\begin{thm}\label{G2cab.thm}
Let $(M^7,\varphi)$ be a $\GG_2$ manifold (so $\varphi$ is a closed and coclosed positive 3-form).  Then $\varphi$ and $*\varphi$ are calibrations.
\end{thm}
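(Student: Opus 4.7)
The plan is to verify the two defining properties of a calibration for both forms, handling the closedness immediately and then doing the real work pointwise for the comass bound.

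First I would dispatch the closedness condition at once. By definition of a $\GG_2$ manifold, $\d\varphi=0$ and $\d\!*\!\varphi=0$, so both $\varphi$ and $*\varphi$ are closed forms on $M$. It remains only to check the pointwise inequality on unit tangent vectors. Moreover, once the inequality is established for $\varphi$, I would not reprove it for $*\varphi$: instead I would invoke Lemma \ref{starcab.lem}, which says that the Hodge dual of a calibration is itself a calibration provided it is closed, which is exactly our situation. This reduces everything to showing that the standard $\GG_2$ $3$-form on $\R^7$ satisfies the comass bound.

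The key point is that since $\varphi$ is a positive $3$-form, at every $p\in M$ there is an oriented orthonormal basis of $T_pM$ identifying $(T_pM,g_p,\varphi_p)$ with the model $(\R^7,\langle\cdot,\cdot\rangle,\varphi_0)$, so it suffices to show $\varphi_0(e_1,e_2,e_3)\leq 1$ for any orthonormal triple in $\R^7$. I would use the cross product $\times:\R^7\times\R^7\rightarrow\R^7$ associated with $\varphi_0$, characterised by
\[
\varphi_0(x,y,z)=\langle x\times y,z\rangle,
\]
together with the standard identity
\[
|x\times y|^2=|x|^2|y|^2-\langle x,y\rangle^2,
\]
which follows from the octonionic (or equivalently $\GG_2$-invariant) definition of $\times$. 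For an orthonormal pair $e_1,e_2$ this gives $|e_1\times e_2|=1$, and then Cauchy--Schwarz yields
\[
|\varphi_0(e_1,e_2,e_3)|=|\langle e_1\times e_2,e_3\rangle|\leq |e_1\times e_2|\,|e_3|=1.
\]
Thus $\varphi$ satisfies the comass bound, and combined with $\d\varphi=0$ it is a calibration; Lemma \ref{starcab.lem} then upgrades this to $*\varphi$ being a calibration as well.

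The only genuinely substantive step is the pointwise inequality for $\varphi_0$, and within that the crucial input is the identity $|x\times y|^2=|x|^2|y|^2-\langle x,y\rangle^2$ for the $\GG_2$ cross product. This is not a universal fact about ternary forms but is special to the $\GG_2$-structure on $\R^7$, inherited from the normed algebra structure of the octonions. Given that identity the rest is just Cauchy--Schwarz and the holonomy principle, so I expect no further difficulty; the entire argument rests on correctly setting up the model $3$-form and its associated cross product.
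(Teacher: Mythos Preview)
Your proof is correct and complete. Both you and the paper reduce to the pointwise inequality on $\R^7$ and then invoke Lemma~\ref{starcab.lem} for $*\varphi$, but the core pointwise arguments differ. The paper uses the transitive action of $\GG_2$ on unit vectors, together with the $\SU(3)$ stabiliser of $e_1$ and the canonical-form argument from Wirtinger's inequality, to put an arbitrary orthonormal triple into the form $\{e_1,e_2,\cos\theta\,e_3+\sin\theta\,v\}$ and then reads off $\varphi_0=\cos\theta$ from the explicit expression $\varphi_0(e_1,e_2,\cdot)=\d x_3$. Your route via the cross-product identity $|x\times y|^2=|x|^2|y|^2-\langle x,y\rangle^2$ and Cauchy--Schwarz is more self-contained, avoiding any appeal to the structure of $\GG_2$ or its isotropy subgroups. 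What the paper's approach buys is an immediate description of the calibrated planes as the $\GG_2$-orbit of $\Span\{e_1,e_2,e_3\}$, which feeds directly into the ensuing discussion of associative planes; your argument recovers the equality case too (Cauchy--Schwarz is sharp exactly when $e_3=\pm e_1\times e_2$), but you would need to state this explicitly to match that payoff.
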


\begin{proof}
Let $u,v,w$ be oriented orthonormal vectors in $\R^7$.  There exists an element $A$ of $\GG_2$ so that $Au=e_1$.  The subgroup of $\GG_2$ fixing $e_1$ is 
isomorphic to $\SU(3)$, and we know from the proof of Wirtinger's inequality (Theorem \ref{Wirtinger.thm}) there exists a (special) unitary transformation so that $v=e_2$ and 
$w=\cos\theta e_3+\sin\theta v$ for some $\theta$ and $v$ orthogonal to $e_1,e_2,e_3$. Since $\varphi(e_1,e_2,.)=\d x_3$, we see that 
$\varphi(u,v,w)=\cos\theta$.  Hence, since $\varphi$ is closed, $\varphi$ is a calibration and the calibrated planes are given by $A.\Span\{e_1,e_2,e_3\}$ for $A\in\GG_2$.

By Lemma \ref{starcab.lem}, $*\varphi$ is also a calibration.
\end{proof}

 Let us look at the calibrated planes and start with $\varphi$, which we take to be the following on $\R^7$:
 \[\varphi=\d x_{123}+\d x_{145}+\d x_{167} +\d x_{246} -\d x_{257} -\d x_{347} -\d x_{356},\]
 where we use the short-hand notation $\d x_{ij\ldots k}=\d x_i\wedge\d x_j\wedge\ldots\wedge \d x_k$.
 Hence, $*\varphi$ on $\R^7$ is given by:
 \[*\varphi=\d x_{4567}+\d x_{2367} +\d x_{2345} +\d x_{1357} -\d x_{1346} -\d x_{1256} -\d x_{1247}.\]

If $u,v,w$ are unit vectors in $\R^7\cong \Imm\O$ (the imaginary octonions), then $\varphi(u,v,w)=\langle u\times v,w\rangle=1$ if and only if 
$w=u\times v$, so $P=\Span\{u,v,w\}$ is a copy of $\Imm\H$ in $\Imm\O$; in other words, $\Span\{1,u,v,w\}$ is an associative subalgebra of $\O$.  
Moreover, suppose we define a vector-valued 3-form $\chi$ on $\R^7$ by
\begin{equation*}
\chi(u,v,w)=[u,v,w]=u(vw)-(uv)w,
\end{equation*} known 
as the \emph{associator}.  Then we observe the following.

\begin{lem}
A 3-plane $P$ in $\R^7$ satisfies $\chi|_P\equiv 0$ if and only if $P$ admits an orientation so that it is calibrated by $\varphi$.
\end{lem}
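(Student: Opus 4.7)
The plan is to exploit the $\GG_2$-invariance of both $\varphi$ and the associator $\chi$ to reduce to a canonical form for an orthonormal basis of the $3$-plane $P$, and then verify the biconditional by establishing the pointwise algebraic identity
\[\varphi(u,v,w)^2 + \tfrac{1}{4}|\chi(u,v,w)|^2 = 1\]
on any oriented orthonormal triple $u,v,w\in\Imm\O$. Once this identity is in hand, the equivalence is immediate.

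First I would note that, since $\GG_2$ preserves octonion multiplication, both $\varphi$ and $\chi$ are $\GG_2$-invariant. Following the normalisation already carried out in the proof of Theorem \ref{G2cab.thm}, I would apply $\GG_2$ to send $u$ to $e_1$, then use the stabiliser $\SU(3)\subset\GG_2$ to bring $v$ to $e_2$ and $w$ into the Wirtinger-type canonical form, and finally use the residual $\SU(2)\subset\SU(3)$ stabilising $\{e_1,e_2,e_3=e_1\times e_2\}$ (which acts transitively on unit vectors of $\Span\{e_4,e_5,e_6,e_7\}$) to rotate the component of $w$ perpendicular to $\Span\{e_1,e_2,e_3\}$ into the $e_4$ direction. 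This yields
\[u=e_1,\quad v=e_2,\quad w=\cos\theta\,e_3+\sin\theta\,e_4,\quad \theta\in[0,\tfrac{\pi}{2}].\]

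Next I would recall that the associator on an alternative algebra is alternating (Artin's theorem), so $\chi$ is an alternating trilinear form on $\Imm\O$ and therefore $\chi|_P\equiv 0$ is equivalent to $\chi(u,v,w)=0$ for one oriented orthonormal basis of $P$. On the canonical form above, Theorem \ref{G2cab.thm} already gives $\varphi(u,v,w)=\cos\theta$. For $\chi$: the triple $e_1,e_2,e_3$ spans the imaginary part of the associative quaternion subalgebra $\H\subset\O$, so $\chi(e_1,e_2,e_3)=0$, and a short octonion-table computation using $e_2e_4=e_6$, $e_1e_6=e_7$, $e_1e_2=e_3$, $e_3e_4=-e_7$ gives $\chi(e_1,e_2,e_4)=2e_7$. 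Trilinearity then yields $\chi(u,v,w)=2\sin\theta\cdot e_7$, which establishes the desired identity. The lemma follows at once: $\chi|_P\equiv 0$ iff $\sin\theta=0$ iff $\varphi(u,v,w)=\pm 1$, and by Theorem \ref{G2cab.thm} this last condition is precisely that $P$, with an appropriate choice of orientation, be calibrated by $\varphi$. The main obstacle is the sign bookkeeping in the octonion products underlying $\chi(e_1,e_2,e_4)$: the pointwise identity, and hence the equivalence, is sensitive to the sign conventions in the explicit formula for $\varphi$ fixed in the excerpt, so matching those conventions correctly is the one genuinely error-prone step in the argument.
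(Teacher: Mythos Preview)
Your proof is correct and follows essentially the same route as the paper: use $\GG_2$-invariance to put an orthonormal basis of $P$ in the standard position $e_1,e_2,\cos\theta\,e_3+\sin\theta\,v$ from Theorem \ref{G2cab.thm}, then compute that the associator vanishes on $e_1,e_2,e_3$ but not on $e_1,e_2,v$. You go slightly further than the paper by using the residual $\SU(2)$ to pin $v$ down to $e_4$ and by packaging the conclusion via the Harvey--Lawson identity $\varphi(u,v,w)^2+\tfrac14|\chi(u,v,w)|^2=1$, but the underlying argument is the same.
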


\begin{proof}
  Since the associator is clearly invariant under $\GG_2$ we can put any plane $P$ in standard position using $\GG_2$, i.e.~as in the proof of Theorem \ref{G2cab.thm}, we can write
$P=\Span\{e_1,e_2,\cos\theta e_3+\sin\theta v\}$ for some $v$ orthogonal to $e_1,e_2,e_3$.  
We can calculate that $[e_1,e_2,e_3]=0$ whereas $[e_1,e_2,v]\neq 0$ for any $v$ orthogonal to $e_1,e_2,e_3$.  Moreover, $P$ is calibrated by
$\varphi$ if and only if $\theta=0$.
We thus see that $P$ is calibrated by $\varphi$ (up to a choice a orientation) if and only if $\chi|_P\equiv 0$. 
\end{proof}

Hence we call the $\varphi$-calibrated planes \emph{associative}.   In general on a $\GG_2$ manifold we can define a 3-form $\chi$ with
 values in $TM$ using the pointwise formula above.
 
 For $*\varphi$ we see that $*\varphi|_P=\vol_P$ for a plane $P$ if and only if $\varphi|_{P^{\perp}}=\vol_{P^{\perp}}$.  Hence the planes calibrated by 
$*\varphi$ are the orthogonal complements of the associative planes, so we call them \emph{coassociative}.  
 We have a similar alternative characterisation for 4-planes calibrated by $*\varphi$.
 
\begin{lem}
A 4-plane $P$ in $\R^7$ satisfies $\varphi|_P\equiv 0$ if and only if $P$ admits an orientation so that it is calibrated by $*\varphi$.
\end{lem}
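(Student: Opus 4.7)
My plan is to mimic the strategy used for the previous lemma: exploit the $\GG_2$-invariance of $\varphi$ and $*\varphi$ to reduce $P$ (or equivalently $P^\perp$) to a one-parameter normal form, then verify both implications by direct calculation in that normal form.

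I would first handle the direction $(\Leftarrow)$, which is the easier one. Suppose $P$ admits an orientation making it calibrated by $*\varphi$. By Lemma \ref{starcab.lem}, $P^\perp$ then carries an orientation making it calibrated by $\varphi$, so by the previous lemma $P^\perp$ is an associative $3$-plane. Applying a suitable element of $\GG_2$, I may assume $P^\perp = \Span\{e_1, e_2, e_3\}$ and hence $P = \Span\{e_4, e_5, e_6, e_7\}$. Inspecting the explicit expression
\[
\varphi = \d x_{123}+\d x_{145}+\d x_{167} +\d x_{246} -\d x_{257} -\d x_{347} -\d x_{356},
\]
no monomial has all three indices in $\{4,5,6,7\}$, so $\varphi|_P \equiv 0$.

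For the harder direction $(\Rightarrow)$, I will apply the normal form from the proof of Theorem \ref{G2cab.thm} to the $3$-plane $P^\perp$, yielding an element of $\GG_2$ sending $P^\perp$ to $\Span\{e_1, e_2, \cos\theta\, e_3 + \sin\theta\, v\}$ for some $\theta \in [0, \pi/2]$ and some unit vector $v$ orthogonal to $e_1, e_2, e_3$. Using the $\SU(2)$ subgroup of $\GG_2$ that fixes $\{e_1, e_2, e_3\}$ (sitting inside the $\SU(3)$ stabilizer of $e_1$ and acting transitively on the unit sphere in $\Span\{e_4, e_5, e_6, e_7\}$), I may further normalize $v = e_4$. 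Then $P$ acquires the orthonormal basis $f_1 = -\sin\theta\, e_3 + \cos\theta\, e_4$, $f_2 = e_5$, $f_3 = e_6$, $f_4 = e_7$, and a short calculation using the explicit form of $\varphi$ shows that the only possibly nonzero value of $\varphi$ on ordered triples from this basis is $\varphi(f_1, f_2, f_3) = \sin\theta$, coming from the $-\d x_{356}$ term; all other triples involve index sets not appearing in $\varphi$. Thus $\varphi|_P \equiv 0$ forces $\sin\theta = 0$, i.e.\ $\theta = 0$, which is precisely the condition from the previous lemma that $P^\perp$ be associative and hence that $P$ be coassociative.

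The step I expect to be the main obstacle is the full exploitation of $\GG_2$ in the normal form, specifically using the $\SU(2)$ stabilizer of $\{e_1, e_2, e_3\}$ to absorb the free direction $v$ so that the classification of $4$-planes up to $\GG_2$ genuinely reduces to a single scalar $\theta$. Once this reduction is established, the remainder is a routine evaluation of $\varphi$ entirely parallel to the corresponding step for $3$-planes in the previous lemma.
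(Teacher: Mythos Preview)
Your proof is correct and follows essentially the same normal-form strategy as the paper: put $P^\perp$ into the one-parameter form coming from the proof of Theorem~\ref{G2cab.thm} and then compute $\varphi|_P$ directly. The only differences are cosmetic: you treat the easy direction separately via Lemma~\ref{starcab.lem}, and you exploit the $\SU(2)$ stabilizer of $e_1,e_2,e_3$ to normalize $v=e_4$ (making the computation cleaner), whereas the paper keeps the general $v=\sum_j a_je_j$ and reads off both directions from the single calculation $*\varphi|_P=\cos\theta$, $\varphi|_P\propto\sin\theta$.
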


\begin{proof}
We know that given a 4-plane $P$ we can choose coordinates such that  $P^{\perp}=\Span\{e_1,e_2,\cos\theta e_3+\sin\theta (a_4 e_4+a_5e_5+a_6e_6+a_7e_7)\}$  
where $\sum_ja_j^2=1$.
Then 
\begin{align*}
P=\Span\{&-\sin\theta e_3+\cos\theta (a_je_j),a_5e_4-a_4e_5+a_7e_6-a_6e_7,\\
&a_6e_4-a_7e_5-a_4e_6+a_5e_7,a_7e_4+a_6e_5-a_5e_6 -a_4e_7\}.
\end{align*}
We can then see directly that $*\varphi|_P=\cos\theta$.  We also have $\varphi(e_i,e_j,e_k)=0$ for $i,j,k\in\{4,5,6,7\}$ and $e_3\lrcorner\varphi
=-\d x_{47}-\d x_{56}$, so that $\varphi(-\sin\theta e_3+\cos \theta(a_je_j),v,w)$ is a non-zero multiple of $\sin\theta$ for some $v,w\in P$.  
Hence $\varphi|_P=0$ if and only if $\theta=0$, which is if and only if $P$ is calibrated by $*\varphi$ (again up to a choice of orientation).
\end{proof}

We thus can define our calibrated submanifolds.

\begin{dfn}
Submanifolds in $(M^7,\varphi)$ calibrated by $\varphi$ are called \emph{associative} 3-folds.  Moreover, $N$ is associative if and only if $\chi|_N\equiv 0$ (up to a choice of orientation).

Submanifolds in $(M^7,\varphi)$ calibrated by $*\varphi$ are called 
\emph{coassociative} 4-folds. Moreover, $N$ is coassociative if and only if $\varphi|_N\equiv 0$ (up to a choice of orientation).
\end{dfn}

It is instructive to see the form that the associative or coassociative condition takes by studying associative or coassociative graphs in $\R^7$: see \cite{HL} for details.

A simple way to get associative and coassociative submanifolds is by using known geometries.

\begin{prop}
Let $x_1,\ldots,x_7$ be coordinates on $\R^7$ and let $z_j=x_{2j}+ix_{2j+1}$ be coordinates on $\C^3$ so that $\R^7=\R\times\C^3$.  
\begin{itemize}
\item[(a)] $N=\R\times S\subseteq \R\times\C^3$ is associative or coassociative if and only if $S$ is a complex curve or a special Lagrangian 3-fold with 
phase $-i$, respectively.
\item[(b)] $N\subseteq\{0\}\times\C^3$ is associative or coassociative if and only if $N$ is a special Lagrangian 3-fold or a complex surface, respectively.
\end{itemize}
\end{prop}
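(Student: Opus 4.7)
The plan is to first find a convenient expression for $\varphi$ and $*\varphi$ adapted to the splitting $\R^7 = \R \times \C^3$, and then read off the conditions on $S$ (resp.~$N$) by restricting these forms.

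Writing $t = x_1$, $\omega = dx_{23} + dx_{45} + dx_{67}$ for the standard K\"ahler form on $\C^3$, and $\Upsilon = dz_1 \wedge dz_2 \wedge dz_3$ for the holomorphic volume form, a direct expansion of the given formulas for $\varphi$ and $*\varphi$ yields
\[
\varphi = dt \wedge \omega + \Ree\Upsilon, \qquad *\varphi = \tfrac{1}{2}\omega^2 - dt \wedge \Imm\Upsilon.
\]
This reduces the problem to a bookkeeping exercise, and is the one genuine computation in the argument; everything else follows by picking an orthonormal frame and tracking which terms survive on $N$.

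For (a), let $v_1,\dots,v_k$ be an orthonormal basis of $T_pS$, so $\partial_t, v_1,\dots,v_k$ is an orthonormal basis of $T_{(t,p)}N$. When $N$ is 3-dimensional ($k=2$), I restrict $\varphi$ to $N$: the $\Ree\Upsilon$-term vanishes since $\partial_t$ annihilates $\Upsilon$, leaving $\varphi|_N = \omega(v_1,v_2)$; by Wirtinger (Theorem \ref{Wirtinger.thm}) this equals $1$ exactly when $S$ is a complex curve. When $N$ is 4-dimensional ($k=3$), I use the characterisation $N$ coassociative $\iff \varphi|_N \equiv 0$: one gets $\varphi|_N = dt \wedge \omega|_S + \Ree\Upsilon|_S$, which vanishes iff $\omega|_S = 0$ (Lagrangian condition) and $\Ree\Upsilon|_S = 0$; comparing with the definition of special Lagrangian with phase $e^{i\theta}$ (namely $\Imm(e^{-i\theta}\Upsilon)|_S = 0$) and taking $e^{-i\theta} = i$ gives phase $e^{i\theta} = -i$ as claimed.

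For (b), since $dt|_N = 0$ the formulas collapse dramatically: $\varphi|_N = \Ree\Upsilon|_N$ and $*\varphi|_N = \tfrac{1}{2}\omega^2|_N$. Thus a 3-fold $N \subset \C^3$ is calibrated by $\varphi$ iff $\Ree\Upsilon|_N = \vol_N$, which by Theorem \ref{Lagcalib.thm} forces $|\Upsilon|_N| = 1$ and hence the Lagrangian condition, so $N$ is special Lagrangian. A 4-fold $N \subset \C^3$ is calibrated by $*\varphi$ iff $\tfrac{1}{2}\omega^2|_N = \vol_N$, which by Wirtinger means $N$ is a complex surface.

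The only potential subtlety is orientation: the definitions of associative and coassociative submanifolds permit choosing an orientation to make $\varphi|_N$ (resp.~$*\varphi|_N$) positive, and the same latitude is built into the definition of special Lagrangian, so no contradiction arises. The decomposition of $\varphi$ and $*\varphi$ in the first step is the only real obstacle, and it is purely computational.
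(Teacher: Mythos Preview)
Your proof is correct and follows essentially the same approach as the paper: both hinge on the decomposition $\varphi = \d t\wedge\omega + \Ree\Upsilon$, $*\varphi = \tfrac{1}{2}\omega^2 - \d t\wedge\Imm\Upsilon$, after which the conditions on $S$ and $N$ are read off by restriction. The only cosmetic difference is that in part (a) for coassociatives you invoke the vanishing characterisation $\varphi|_N\equiv 0$, whereas the paper works directly with the calibration condition $*\varphi|_{\R\times S}=\d t\wedge\vol_S$; both routes are immediate once the decomposition is in hand.
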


\begin{proof}
Recall the K\"ahler form $\omega$ and holomorphic volume form $\Upsilon$ on $\C^3$.  We can write 
$$\varphi=\d x_1\wedge\omega+\Ree\Upsilon\quad\text{and}\quad *\varphi=\frac{1}{2}\omega^2-\d x_1\wedge\Imm\Upsilon.$$
For associatives, we see that $\varphi|_{\R\times S}=\d x_1\wedge\vol_S$ if and only if $\omega|_S=\vol_S$ and $\varphi|_N=\Ree\Upsilon|_N$ for $N\subseteq\C^3$.
For coassociatives, we see that $*\varphi|_{\R\times S}=\d x_1\wedge\vol_S$ if and only if $-\Imm\Upsilon|_S=\vol_S$ and $*\varphi|_N=\frac{1}{2}\omega^2|_N$ 
for $N\subseteq\C^3$.  The results quickly follow.
\end{proof}

We can also produce examples in $\GG_2$ manifolds with an isometric involution.

\begin{prop}
Let $(M,\varphi)$ be a $\GG_2$ manifold with an isometric involution $\sigma\neq \id$ such that $\sigma^*\varphi=\varphi$ or $\sigma^*\varphi=-\varphi$.  Then 
$\text{Fix}(\sigma)$ is an associative or coassociative submanifold in $M$ respectively, if it is non-empty.
\end{prop}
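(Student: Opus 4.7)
\medskip
\noindent\textbf{Proof plan.} The plan is to reduce to an algebraic question about the orthogonal involution $d\sigma_p$ at a fixed point $p$, and then to use the structure of $\GG_2$. The first step is the standard Riemannian fact that the fixed-point set of an isometric involution is a disjoint union of totally geodesic smooth submanifolds, whose tangent space at any $p\in\text{Fix}(\sigma)$ equals the $+1$-eigenspace $V_+=\ker(d\sigma_p-\id)$ of the orthogonal involution $d\sigma_p:T_pM\to T_pM$. It therefore suffices, under the hypothesis $\sigma^*\varphi=\pm\varphi$, to prove that $V_+$ is an associative $3$-plane in the $+$ case and a coassociative $4$-plane in the $-$ case.

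Next I would trivialise $T_pM\cong\R^7$ by a $\GG_2$-adapted orthonormal frame so that $\varphi_p$ corresponds to the standard $\varphi_0$. Since the stabiliser of $\varphi_0$ in $\GL(7,\R)$ is exactly $\GG_2\subset\SO(7)$, and since $(-\id)^*\varphi_0=-\varphi_0$, the orthogonal transformations $A$ with $A^*\varphi_0=-\varphi_0$ form the coset $(-\id)\cdot\GG_2$. Hence $d\sigma_p\in\GG_2$ in the case $\sigma^*\varphi=\varphi$, while in the case $\sigma^*\varphi=-\varphi$ we may write $d\sigma_p=-g$ for some $g\in\GG_2$, and $(d\sigma_p)^2=\id$ then forces $g^2=\id$. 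The crucial input, which I expect to be the main obstacle, is the classification of involutions in $\GG_2$: up to conjugation there is a unique non-trivial involution, whose centraliser is the subgroup $\SO(4)\subset\GG_2$ of automorphisms of $\O$ fixing a quaternion subalgebra $\H\subset\O$. Under the splitting $\R^7=\Imm\O=\Imm\H\oplus\H$, this involution is the non-trivial central element of $\SO(4)=(\SU(2)\times\SU(2))/\Z_2$ and acts as $+\id$ on $\Imm\H$ (the standard associative $3$-plane) and as $-\id$ on $\H$ (the complementary coassociative $4$-plane). The real content is identifying $V_+$ as an associative plane and not merely a $3$-plane of the right dimension; this can alternatively be verified by a direct calculation, decomposing $\varphi_0$ according to the eigenspaces of $d\sigma_p$ and using positivity of $\varphi_0$ to rule out the other dimensional possibilities.

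Finally I would assemble the two cases. In the case $\sigma^*\varphi=\varphi$, the possibility $d\sigma_p=\id$ would make $\sigma$ agree with the identity to first order at $p$; since isometries are determined by their value and derivative at a single point, this would force $\sigma=\id$ on the connected component of $p$ and hence on all of $M$, contradicting $\sigma\neq\id$. Therefore $d\sigma_p$ is the non-trivial involution of $\GG_2$, $V_+$ is the associative $3$-plane, and $\text{Fix}(\sigma)$ is a $3$-dimensional submanifold with associative tangent space at every point. In the case $\sigma^*\varphi=-\varphi$, writing $d\sigma_p=-g$ as above, either $g=\id$ (so $d\sigma_p=-\id$ and $p$ is an isolated fixed point, which contributes only to the zero-dimensional part of $\text{Fix}(\sigma)$) or $g$ is a non-trivial involution, in which case $V_+=\ker(-g-\id)$ is precisely the $-1$-eigenspace of $g$, namely the coassociative $4$-plane; the positive-dimensional components of $\text{Fix}(\sigma)$ are therefore coassociative $4$-folds.
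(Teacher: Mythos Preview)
The paper states this proposition without proof (as it does for the analogous special Lagrangian and Cayley fixed-point propositions), so there is no argument in the paper to compare against directly. Your approach is the standard and correct one: reduce to the linear algebra of $d\sigma_p$ at a fixed point, identify $d\sigma_p$ as an involution in $\GG_2$ (respectively in the coset $(-\id)\cdot\GG_2$, using that any $A\in\GL(7,\R)$ with $A^*\varphi_0=-\varphi_0$ is automatically in $\OO(7)\setminus\SO(7)$ since $-\varphi_0$ induces the same metric but the opposite orientation), and then invoke the fact that $\GG_2$ has a unique conjugacy class of non-trivial involutions, whose $\pm 1$-eigenspaces are the standard associative $3$-plane $\Imm\H$ and its coassociative complement.

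Your handling of the edge cases is also correct. Ruling out $d\sigma_p=\id$ via rigidity of isometries tacitly assumes $M$ connected, which is the intended setting. In the anti-invariant case you rightly observe that $d\sigma_p=-\id$ can occur, giving isolated fixed points; this means the proposition as literally stated is slightly imprecise, and your formulation ``the positive-dimensional components of $\text{Fix}(\sigma)$ are coassociative $4$-folds'' is the accurate refinement. One small notational slip: in the splitting of $\Imm\O$ you write $\Imm\H\oplus\H$, but the second summand is the orthogonal complement of $\H$ in $\O$ (i.e.\ $\H e$ for a unit $e\perp\H$), not $\H$ itself; this does not affect the argument.
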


We also have explicit examples of associatives and coassociatives.

\begin{ex}
The first explicit examples of associatives in $\R^7$ not arising from other geometries are given in \cite{Lotayevol} from symmetry and evolution equation considerations.

The first explicit non-trivial examples of coassociatives in $\R^7$ are given in \cite{HL}.  There are two dilation families: one which has one end 
asymptotic to a cone $C$ on a non-round $\mathcal{S}^3$, and one which has two ends 
asymptotic to $C\cup\R^4$.  The cone $C$ was discovered earlier by Lawson--Osserman \cite{LO} and was the first example of a volume-minimizing submanifold which 
is not smooth (it is Lipschitz but not $C^1$).
\end{ex}

\begin{ex}
In the Bryant--Salamon holonomy $\GG_2$ metric on the spinor bundle of $\mathcal{S}^3$ \cite{BS}, the base $\mathcal{S}^3$ is associative.
In the Bryant--Salamon holonomy $\GG_2$ metrics on $\Lambda^2_+T^*\mathcal{S}^4$ and $\Lambda^2_+T^*\C\P^2$ \cite{BS}, the bases $\mathcal{S}^4$ and $\C\P^2$ are coassociative. 
\end{ex}

We now want to understand deformations of associatives and coassociatives, from which perturbation or gluing results will follow.  We begin with associatives.


Notice that if $P$ is an associative plane, $u\in P$ and $v\in P^{\perp}$ then $u\times v\in P^{\perp}$ since 
$\varphi(w,u,v)=g(w,u\times v)=g(v,w\times u)=0$ for all $w\in P$ since $w\times u\in P$. Thus, if $N$ is associative, cross product 
 gives a (Clifford) multiplication $m:C^{\infty}(T^*N\otimes\nu(N))\rightarrow C^{\infty}(\nu(N))$ (viewing tangent vectors as cotangent vectors via the metric). Hence, using the normal connection $\nabla^{\perp}:C^{\infty}(\nu(N))
\rightarrow C^{\infty}(T^*N\otimes\nu(N))$ on 
$\nu(N)$ we get a linear operator $$\Dslash=m\circ\nabla^{\perp}:C^{\infty}(\nu(N))\rightarrow C^{\infty}(\nu(N)).$$  We call $\Dslash$ the \emph{Dirac operator}.
 We see that its principal symbol is given by $\sigma_{\Dslash}(x,\xi)v=i\xi\times v$, so $\Dslash$ is elliptic, and we also have that $\Dslash^*=\Dslash$.

\begin{remark}
Since a 3-manifold is always spin, we have a spinor bundle $\SS$ on $N$, a connection $\nabla:C^{\infty}(\SS)\rightarrow C^{\infty}(T^*M\otimes \SS)$ (a lift of the Levi-Civita connection) and 
we have Clifford multiplication $m:C^{\infty}(T^*M\otimes \SS)\rightarrow C^{\infty}(\SS)$ given by $m(\xi,v)=\xi\cdot v$.  Hence we have a composition
$\Dslash=m\circ\nabla:C^{\infty}(\SS)\rightarrow C^{\infty}(\SS)$, which is a first order linear differential operator called the Dirac operator.   
Locally it is given by $\Dslash v= \sum_i e_i\cdot\nabla_{e_i} v$, so we have that
$\sigma_{\Dslash}(\xi,v)=i\xi\cdot v$.  
Hence $\Dslash$ is elliptic.  Moreover $\Dslash$ is self-adjoint. 

In fact, it is possible (see e.g.~\cite{McLean}) to see that the complexified normal bundle 
$\nu(N)\otimes\C=\SS\otimes V$ for a $\C^2$-bundle $V$ over $N$, so that the Dirac operator on $\nu(N)$ is just a ``twist'' of the usual Dirac operator on $\SS$.
\end{remark}

Consider a compact associative $N$.  We want to describe the associative deformations of $N$, just as in the case of special Lagrangians above.  To be consistent with that previous setting, we will now use $P$ to denote a nonlinear deformation map: we trust that this will not cause confusion given the context.

 We know that $\exp_v(N)=N_v$, which is the graph of $v$, is associative for a normal vector field $v$ if and only if 
$*\exp_v^*(\chi)\in C^{\infty}(TM|_N)$
is $0$.  In fact, it turns out that $P(v)=*\exp_v^*(\chi)\in C^{\infty}(\nu(N))$ since $N$ is associative and 
$$L_0P(v)=*\d(v\lrcorner\chi)=\Dslash v.$$
Here $L_0P$ is not typically surjective so we cannot apply our Implicit Function Theorem, except when $\Ker\Dslash=\Ker \Dslash^*=\{0\}$.  However, we can still say something in these circumstances, for which we make a small digression to a more general situation.

Suppose $X,Y$ are Banach spaces.  Let $U\subseteq X$ be an open set with $0\in U$ and let $P:U\rightarrow Y$ be a smooth map with $P(0)=0$ such that $L_0P:X\rightarrow Y$
is Fredholm.  

Let $\mathcal{I}=\Ker L_0P$ and let $\mathcal{O}$ be such that $Y=L_0P(X)\oplus\mathcal{O}$, which exists and is finite-dimensional by the assumption that
$L_0P$ is Fredholm.  We then let $Z=X\oplus\mathcal{O}$ and define $F:U\oplus\mathcal{O}\rightarrow Y$ by $F(u,y)=P(u)+y$.  We see that 
$L_0F:X\oplus\mathcal{O}\rightarrow Y=L_0P(X)\oplus\mathcal{O}$ is given by $L_0F(x,y)=L_0P(x)+y$ which is surjective and $L_0F(x,y)=0$ if and only if 
$L_0P(x)=0$ and $y=0$, thus $\Ker L_0F=\Ker L_0P\times\{0\}$.  

There exists $W\subseteq X$ such that $\Ker L_0P\oplus W=X$. 
Applying the Implicit Function Theorem, there exist  open sets $U_1\subseteq \Ker L_0P$ containing $0$, $U_2\subseteq W$ containing $0$ and 
$U_3\subseteq\mathcal{O}$ containing $0$ and smooth maps $G_2:U_1\rightarrow U_2$, $G_3:U_1\rightarrow U_3$ such that 
$$F^{-1}(0)\cap U_1\times U_2\times U_3=\{(u,G_2(u),G_3(u))\,:\,u\in U_1\}.$$
We also know that $P(x)=0$ if and only if $F(x,y)=0$ and $y=0$.  Hence
$$P^{-1}(0)\cap U_1\times U_2=\{(u,G_2(u))\,:\,u\in G_3^{-1}(0)\}.$$
Let $\mathcal{U}=U_1$ and define $\pi:\mathcal{U}\rightarrow\mathcal{O}$ by $\pi(u)=G_3(u)$.  Then $P^{-1}(0)\cap U_1\times U_2$ is a graph over $\pi^{-1}(0)$, 
and hence $P^{-1}(0)$ is locally homeomorphic to $\pi^{-1}(0)$.

Sard's Theorem says that generically $\pi^{-1}(y)$ is a smooth manifold of dimension $\dim\mathcal{I}-\dim\mathcal{O}=\dim\Ker L_0P-\dim\text{Coker}\,L_0P$, 
which is the index of $L_0P$.  Hence, the expected dimension of $P^{-1}(0)$ is the index of $L_0P$.
  
In the associative setting we have that the linearisation is $\Dslash$, which is elliptic and thus Fredholm, and we know that $\text{index}\,\Dslash=\dim\Ker\Dslash-\dim\Ker\Dslash^*=0$.  We deduce the following \cite{McLean}.

\begin{thm}\label{assocdef.thm}
The expected dimension of the moduli space of deformations of a compact associative 3-fold $N$ in a $\GG_2$ manifold is $0$ and infinitesimal deformations of $N$ are given 
by the kernel of $\Dslash$ on $\nu(N)$.  Moreover, if $\Ker\Dslash=\{0\}$ then $N$ is rigid.
\end{thm}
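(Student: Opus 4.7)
The plan is to follow the standard Kuranishi/implicit function theorem framework set up in the excerpt, with the Dirac operator playing the role that $\d+\d^*$ played in the special Lagrangian case. First I would use the tubular neighbourhood theorem to parametrise nearby submanifolds of $N$ as graphs $N_v=\exp_v(N)$ of normal vector fields $v\in C^{\infty}(\nu(N))$, and take the nonlinear deformation map to be $P(v)=*\exp_v^*(\chi)$. Since $N$ is associative we have $\chi|_N\equiv 0$, so $P(0)=0$; the key structural point, which is already indicated in the excerpt, is that $P(v)$ lies in $C^{\infty}(\nu(N))$ rather than only in $C^{\infty}(TM|_N)$. I would verify this by a pointwise computation: the tangential component of $\exp_v^*(\chi)$ vanishes to all orders thanks to the associator identity $g(\chi(a,b,c),d)=g(\chi(a,b,d),c)$ for $a,b,c,d$ in an associative plane (equivalently, $\chi$ vanishes whenever three of its arguments lie in a fixed associative), so tangential variations of the associator vanish along $N$.

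Next I would compute the linearisation at $0$. By an infinitesimal variation of $\chi$ and a Cartan-style calculation we obtain
\[
L_0P(v)=*\d(v\lrcorner\chi),
\]
which the excerpt identifies with the twisted Dirac operator $\Dslash=m\circ\nabla^{\perp}$. Having $\Dslash$ as the linearisation gives us the full linear theory we need: $\Dslash$ is elliptic (symbol $i\xi\times\,\cdot\,$), and self-adjoint, so it is Fredholm on suitable H\"older spaces and its index vanishes:
\[
\mathrm{ind}\,\Dslash=\dim\Ker\Dslash-\dim\Ker\Dslash^{*}=0.
\]

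With this in hand I would apply the general Kuranishi argument developed just before the theorem statement. Setting $X=C^{k+1,a}(\nu(N))$ and $Y=C^{k,a}(\nu(N))$, elliptic regularity combined with $P(v)\in C^{\infty}$ whenever $P(v)=0$ (since then $N_v$ is associative, hence minimal, hence smooth) ensures the moduli space is smooth at honest solutions. The Fredholm extension trick from the excerpt then produces an obstruction map $\pi:\mathcal{U}\subseteq\Ker\Dslash\to\Ker\Dslash^{*}$ with $\pi(0)=0$, such that locally $P^{-1}(0)\cong \pi^{-1}(0)$. Sard's theorem then gives that the \emph{expected} dimension of $P^{-1}(0)$ is $\mathrm{ind}\,\Dslash=0$, proving the first assertion. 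For the rigidity statement, if $\Ker\Dslash=\{0\}$ then $L_0P$ is an isomorphism (since $\Ker\Dslash^{*}=\{0\}$ also, by self-adjointness), and the ordinary Implicit Function Theorem immediately gives that $v=0$ is the unique solution in a neighbourhood, so $N$ is rigid.

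The main obstacle I anticipate is the identification $L_0P=\Dslash$, together with the claim that $P$ lands in sections of $\nu(N)$. Both are pointwise statements about the $\GG_2$ algebra and the associator, but getting the signs and the cross-product structure right requires a careful unwinding of $\exp^{*}\chi$ to first order using the Levi-Civita connection of the $\GG_2$ metric, using that $\nabla\varphi=0$ (hence $\nabla\chi=0$) to reduce $\exp_v^{*}(\chi)$ at first order to $v\lrcorner\nabla\chi+\d(v\lrcorner\chi)=\d(v\lrcorner\chi)$ modulo terms tangential to $N$ that vanish by the associative condition. Once this algebraic computation is in place, the rest of the proof is an application of the abstract Fredholm machinery already spelled out in the excerpt.
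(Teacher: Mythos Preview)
Your proposal is correct and follows essentially the same approach as the paper: set up the nonlinear map $P(v)=*\exp_v^*(\chi)$, identify its linearisation as the self-adjoint Dirac operator $\Dslash$ (hence index zero), and invoke the Kuranishi/Fredholm machinery laid out immediately before the theorem to obtain the expected dimension and the rigidity statement. One minor slip: the associator identity should read $g(\chi(a,b,c),d)=-g(\chi(a,b,d),c)$ (it is $*\varphi(a,b,c,d)$, which is alternating in all four slots), though the conclusion you draw from it---that $\chi$ restricted to any $3$-plane takes values in the normal directions---is correct and is indeed what makes $P$ land in $C^{\infty}(\nu(N))$.
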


\begin{remark}
The dimension of the kernel of $\Dslash$ typically depends on the metric on $N$ rather than just the topology, so it is usually difficult to determine.  However, there are some cases where one can ensure the moduli space is smooth cf.~\cite{Gayet}.
\end{remark}


\begin{ex}
For the associative $N=\mathcal{S}^3$ in $\SS(\mathcal{S}^3)$, $\nu(N)=\SS(\mathcal{S}^3)$ so $\Dslash$ is just the usual Dirac operator.  
A theorem of Lichnerowicz states that $\Ker\Dslash=\{0\}$ as $\mathcal{S}^3$ has positive scalar curvature so $N$ is rigid.
\end{ex}

\begin{ex}
Corti--Haskins--Nordstr\"om--Pacini construct rigid associative $\mathcal{S}^1\times\mathcal{S}^2$s in compact holonomy $\GG_2$ twisted connected sums \cite{CHNP}.
\end{ex}

For coassociatives, the deformation theory is much better behaved, like for special Lagrangians \cite{McLean}.

\begin{thm}
Let $N$ be a compact coassociative in a $\GG_2$ manifold (or just a 7-manifold with closed $\GG_2$ structure).  The moduli space of deformations of $N$ is a smooth manifold of dimension $b^2_+(N)$.
\end{thm}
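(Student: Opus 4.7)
My plan is to adapt the McLean-style deformation theory of Theorem \ref{SLdef.thm}, using self-dual 2-forms on the coassociative $N$ in place of 1-forms on a special Lagrangian.

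First I would establish the natural bundle isomorphism $\nu(N) \cong \Lambda^2_+T^*N$ given by $v \mapsto (v \lrcorner \varphi)|_N$. Both bundles have rank 3, and one verifies (either intrinsically using the $\GG_2$ cross product, or in the standard coordinates on $\R^7$ as in the analysis of the coassociative condition) that the image of this map is indeed self-dual on the coassociative 4-plane $T_pN$ and that the map is injective, hence an isomorphism. Combined with the tubular neighbourhood theorem, this identifies small normal deformations of $N$ with small self-dual 2-forms $\alpha$ on $N$, through natural diffeomorphisms $f_\alpha: N \to N_\alpha$.

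Next, I would define the nonlinear operator $P(\alpha) = f_\alpha^*(\varphi)$ so that $N_\alpha$ is coassociative if and only if $P(\alpha) = 0$. Since $d\varphi = 0$, $P(\alpha)$ is always closed; moreover $f_\alpha$ is homotopic to the inclusion $N \hookrightarrow M$ through the family $t \mapsto f_{t\alpha}$, so $[P(\alpha)] = [\varphi|_N] = 0$ in $H^3(N)$, and hence $P$ takes values in the closed Banach subspace $Y$ of exact 3-forms in $C^{k,a}(\Lambda^3 T^*N)$. By Cartan's formula,
\[L_0 P(\alpha) = \mathcal{L}_v \varphi|_N = d(v \lrcorner \varphi)|_N = d\alpha,\]
so the linearisation is simply $d: \Omega^2_+(N) \to Y$. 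On a compact 4-manifold, any $\alpha \in \Omega^2_+$ with $d\alpha = 0$ is automatically coclosed since $d^*\alpha = -*d\alpha$, and is therefore harmonic; hence $\Ker L_0 P = \mathcal{H}^2_+(N)$ has dimension $b^2_+(N)$.

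The principal obstacle is showing surjectivity of $L_0 P$ onto $Y$. Given any exact 3-form $d\beta$ with $\beta \in \Omega^2(N)$, the key Hodge-theoretic fact is that the map $\mu \mapsto (d\mu)_-$ from $\Omega^1(N)$ to $\Omega^2_-(N)$ has image equal to the orthogonal complement of $\mathcal{H}^2_-(N)$ in $\Omega^2_-(N)$; writing $\beta_- = h_- + (d\nu)_-$ with $h_- \in \mathcal{H}^2_-$ and $\nu \in \Omega^1$, the self-dual 2-form $\alpha = \beta - h_- + d\nu$ satisfies $d\alpha = d\beta$. With surjectivity established, the Implicit Function Theorem gives that $P^{-1}(0)$ near $0$ is a smooth manifold of dimension $b^2_+(N)$, and the elliptic bootstrapping argument from earlier in the paper shows that every $C^{1,a}$ solution is smooth, completing the proof.
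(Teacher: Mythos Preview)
Your proof is correct and follows essentially the same route as the paper: identify $\nu(N)\cong\Lambda^2_+T^*N$ via $v\mapsto v\lrcorner\varphi$, set up $P(\alpha)=f_\alpha^*\varphi$ landing in exact 3-forms with linearisation $d$, note that closed self-dual 2-forms are harmonic so $\dim\Ker L_0P=b^2_+(N)$, and prove surjectivity of $d:\Omega^2_+\to d\Omega^2$ by Hodge theory (the paper's variant is to write any 2-form as $d^*\beta+\gamma$ with $\gamma$ closed and observe $d\alpha=dd^*\beta=d(d^*\beta+*d^*\beta)$ with $d^*\beta+*d^*\beta$ self-dual). One small slip: with your convention $\beta_-=h_-+(d\nu)_-$, the self-dual form should be $\alpha=\beta-h_--d\nu$, not $\beta-h_-+d\nu$.
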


\begin{proof}
Since $N$ is coassociative the map $v\mapsto v\lrcorner\varphi=\alpha_v$ defines an isomorphism from $\nu(N)$ to a rank 3 vector bundle on $N$, which is 
$\Lambda^2_+T^*N$, the 2-forms on $N$ which are self-dual (so $*\alpha=\alpha$).  We can therefore view nearby submanifolds to $N$ as graphs of self-dual 2-forms.

We know that $N_v=\exp_v(N)$ is coassociative if and only if $\exp_v^*(\varphi)=0$.  We see that
$$\frac{\d}{\d t}\exp_{tv}^*(\varphi)|_{t=0}=\mathcal{L}_v\varphi=\d(v\lrcorner\varphi)=\d\alpha_v.$$
Hence nearby coassociatives $N'$ to $N$ are given by the zeros of $P(\alpha)=\d\alpha+Q(\alpha,\nabla\alpha)$.  Moreover, since $\varphi=0$ on $N$, $[\varphi]=0$ on $N'$
and hence $P:C^{\infty}(\Lambda^2_+T^*N)\rightarrow \d (C^{\infty}(\Lambda^2T^*N))$.  

Here $P$ is not elliptic, but $L_0P=\d$ has finite-dimensional kernel, the closed self-dual 2-forms,  since $\d\alpha=0$ implies that $\d^*\alpha
=-*\d*\alpha=0$ so $\alpha$ is harmonic. Moreover, $L_0P$ has injective symbol so it is overdetermined elliptic, which means that elliptic regularity still holds.  
Another way to deal with this is to consider $F(\alpha,\beta)=P(\alpha)+\d^*\beta$ for $\beta$ a 4-form.  Now $F^{-1}(0)$ is the disjoint union of $P^{-1}(0)$  
and  multiples of the volume form,  as exact  and coexact 
forms are orthogonal.  Moreover, $L_0F(\alpha,\beta)=\d\alpha+\d^*\beta$ is now elliptic.   Overall, we can apply our standard Implicit Function Theorem 
if we know that
$$\d(C^{k+1,a}(\Lambda^2_+T^*N))=\d(C^{k+1,a}(\Lambda^2T^*N)).$$
This is true because by Hodge theory if $\alpha$ is a 2-form, we can write $\alpha=\d^*\beta+\gamma$ for a 3-form $\beta$ and a closed form $\gamma$, 
so $\d\alpha=\d\d^*\beta=\d(\d^*\beta+*\d^*\beta)$ and $\d^*\beta+*\d^*\beta$ is self-dual.
\end{proof}

\begin{ex}
The $\mathcal{S}^4$ and $\C\P^2$ in the Bryant--Salamon metrics on $\Lambda^2_+T^*\mathcal{S}^4$ and $\Lambda^2_+T^*\C\P^2$ have $b^2_+=0$ and so are rigid.
\end{ex}

For a K3 surface and $T^4$ we have $b^2_+=3$ and $\Lambda^2_+$ is trivial, so we can hope to find coassociative K3 and $T^4$ fibrations of compact $\GG_2$ manifolds.  There is 
a programme \cite{Kovalev} for constructing a coassociative K3 fibration (with singular fibres).  Towards completing this programme,  the first 
examples of compact coassociative 4-folds with conical singularities in compact holonomy $\GG_2$ twisted connected sums were constructed in \cite{Lotaystab}.

Again, we have a similar isometric embedding result for coassociative 4-folds, motivated by the deformation theory result \cite{Bryant.embed}.

\begin{thm} 
Any compact oriented real analytic Riemannian 4-manifold whose bundle of self-dual 2-forms is trivial can be isometrically embedded in a $\GG_2$ manifold 
as the fixed points of an isometric involution.
\end{thm}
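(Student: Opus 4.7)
The plan is to take as the ambient $7$-manifold a small open neighbourhood $M$ of the zero section in the total space of the rank-$3$ bundle $E=\Lambda^2_+T^*N$, which by the triviality hypothesis is diffeomorphic to $N\times\R^3$. The natural fibrewise involution $\sigma(p,v)=(p,-v)$ fixes precisely the zero section, so the task reduces to constructing a torsion-free $\GG_2$ structure $\varphi$ on $M$ that induces the given metric $g_N$ on $N$ and satisfies $\sigma^*\varphi=-\varphi$: the earlier proposition on fixed points of $(-1)$-type isometric involutions then guarantees that $N=\text{Fix}(\sigma)$ is coassociative, and the embedding is isometric by construction.

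First I would specify the Cauchy data along $N$. Using the triviality hypothesis, pick a global $g_N$-orthonormal frame $\omega_1,\omega_2,\omega_3$ of $\Lambda^2_+T^*N$ and let $v_1,v_2,v_3$ be the dual fibre coordinates on $E$. Along $N\subset E$ set, up to normalisation constants I suppress,
$$\varphi_0 = \d v_1\wedge\omega_1 + \d v_2\wedge\omega_2 + \d v_3\wedge\omega_3 + \d v_1\wedge\d v_2\wedge\d v_3.$$
At each $p\in N$ the splitting $T_{(p,0)}E = T_pN\oplus E_p\cong \R^4\oplus\R^3$ identifies $\varphi_0|_p$ with a standard $\GG_2$ $3$-form in which $\R^4$ is coassociative, so $\varphi_0$ is pointwise positive on $N$, induces $g_N$ there, and makes the zero section coassociative. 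Every monomial in $\varphi_0$ contains an odd number of $\d v_i$ factors, so $\sigma^*\varphi_0=-\varphi_0$ automatically.

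The key step is to extend $\varphi_0$ off $N$ to a torsion-free positive $\GG_2$ structure on a neighbourhood of $N$ in $E$. The conditions $\d\varphi=0$ and $\d{*}\varphi=0$ form a real-analytic overdetermined exterior differential system on positive $3$-forms, and I would solve the associated Cauchy problem with initial data $(N,\varphi_0)$ by Cartan--K\"ahler. Bryant's analysis of this $\GG_2$-EDS shows it is involutive with Cartan characters matching the prescribed initial data; infinitesimally this is the same count that appears in the coassociative deformation theorem just above, where the moduli space has dimension $b^2_+(N)=\mathrm{rk}\,\Lambda^2_+T^*N$. Because $N$, $g_N$ and the frame $\{\omega_i\}$ are real analytic, $(N,\varphi_0)$ is a real-analytic integral manifold of the correct characteristic type, and Cartan--K\"ahler produces a real-analytic torsion-free $\GG_2$ structure $\varphi$ on some neighbourhood of $N$. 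Equivariance under $\sigma$ can be enforced either by restricting the Cauchy data at each prolongation to odd powers of the $v_i$, or after the fact by replacing $\varphi$ with $\tfrac{1}{2}(\varphi-\sigma^*\varphi)$, which remains a small perturbation of $\varphi_0$ and so is still positive and torsion-free.

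The main obstacle is the Cartan--K\"ahler step: verifying involutivity of the torsion-free $\GG_2$ system and matching its Cartan characters to the prescribed coassociative Cauchy data. This requires careful analysis of the $\GG_2$-representation theory on forms in dimension $7$ and is the technical heart of the result. The triviality hypothesis on $\Lambda^2_+T^*N$ is essential rather than cosmetic, since the normal bundle of any coassociative $4$-fold in a $\GG_2$ manifold is its own bundle of self-dual $2$-forms; thus $N$ can be realised as the zero section of $E$ equipped with the required $\Z/2$-action only if $E$ is already globally trivial.
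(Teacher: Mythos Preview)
The paper does not supply its own proof of this theorem; it merely states the result and attributes it to Bryant \cite{Bryant.embed}. Your outline is in fact a reasonable sketch of Bryant's actual argument: set up the problem on the total space of $\Lambda^2_+T^*N$, write down an explicit $\sigma$-odd positive $3$-form $\varphi_0$ along the zero section using a global orthonormal frame of self-dual $2$-forms (this is where triviality enters), and then invoke Cartan--K\"ahler for the torsion-free $\GG_2$ exterior differential system to thicken $\varphi_0$ to a torsion-free $\GG_2$ structure on a neighbourhood. You are right that the technical heart is the involutivity computation, which you do not carry out but correctly flag.

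There is, however, a genuine slip in your treatment of the $\sigma$-equivariance. Your fallback suggestion of replacing $\varphi$ by $\tfrac{1}{2}(\varphi-\sigma^*\varphi)$ does not work: the torsion-free condition $\d(*_{\varphi}\varphi)=0$ depends on $\varphi$ nonlinearly through the Hodge star, so an average of two torsion-free $\GG_2$ structures has no reason to be torsion-free, even if it remains positive because both summands are close to $\varphi_0$. The correct mechanism is \emph{uniqueness} in Cartan--K\"ahler: since the exterior differential system is natural under diffeomorphisms and the real-analytic initial data satisfies $\sigma^*\varphi_0=-\varphi_0$, the form $-\sigma^*\varphi$ is another real-analytic solution with the same Cauchy data, hence equals $\varphi$. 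Your first alternative, building $\sigma$-oddness into each stage of the prolongation, amounts to the same thing once formalised.

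One minor correction to your closing paragraph: the triviality of $\Lambda^2_+T^*N$ is not needed for the zero section or the fibrewise $\Z/2$-action to exist on $E$ --- those are always present. It is needed so that your explicit formula for $\varphi_0$, which requires a global orthonormal frame $\omega_1,\omega_2,\omega_3$, makes sense over all of $N$ and not just locally.
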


\begin{remark}
The deformation theory results for compact associative and coassociative submanifolds have been extended to certain non-compact, singular and boundary settings, for example in \cite{GayetWitt, JoySalur, KovalevLotay, LotayCScoass, LotayACcoass, LotayACass}.
\end{remark}


\section{Cayley submanifolds}

We now discuss our final class of calibrated submanifolds.

\begin{thm}
On a $\Spin(7)$ manifold $(M^8,\Phi)$ (so $\Phi$ is a closed admissible form), $\Phi$ is a calibration.
\end{thm}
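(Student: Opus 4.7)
The first condition, $\d\Phi = 0$, is assumed. It remains to verify the pointwise bound $\Phi(u_1,u_2,u_3,u_4) \leq 1$ for every oriented orthonormal 4-frame at every $p \in M$. Since $\Phi$ is admissible, I may choose an orthonormal basis of $T_p M$ identifying $T_p M$ with $\R^8$ and $\Phi_p$ with the standard $\Spin(7)$ 4-form
\[
\Phi_0 = \d x_0 \wedge \varphi + *_7 \varphi,
\]
where $\varphi$ is the $\GG_2$ 3-form on $\R^7 = \{x_0 = 0\}$ written out earlier and $*_7$ denotes the Hodge star on $\R^7$. The strategy is exactly parallel to that of Theorem \ref{G2cab.thm}: use the transitivity properties of $\Spin(7)$ to bring an arbitrary orthonormal 4-frame into a canonical form and then evaluate $\Phi_0$ directly.

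The group $\Spin(7)$ acts transitively on the unit sphere $\SS^7 \subset \R^8$ with stabiliser $\GG_2$, since $\Spin(7)/\GG_2 \cong \SS^7$. Hence there exists $A \in \Spin(7)$ with $A u_1 = e_0$, and then $Au_2, Au_3, Au_4$ are orthonormal in $e_0^{\perp} = \R^7$. Applying the $\GG_2$-normal-form argument from the proof of Theorem \ref{G2cab.thm} inside this $\R^7$, I may further arrange that $Au_2 = e_1$, $Au_3 = e_2$, and $Au_4 = \cos\theta\, e_3 + \sin\theta\, v$ for some $\theta \in [0,\tfrac{\pi}{2}]$ and some unit vector $v$ orthogonal to $e_1, e_2, e_3$.

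Because $*_7\varphi$ is a 4-form purely in the variables $x_1,\ldots,x_7$, it annihilates any 4-tuple whose first entry is $e_0$. The evaluation therefore collapses, exactly as in the $\GG_2$ case, to
\[
\Phi_0\bigl(e_0, e_1, e_2, \cos\theta\, e_3 + \sin\theta\, v\bigr) = \varphi\bigl(e_1, e_2, \cos\theta\, e_3 + \sin\theta\, v\bigr) = \cos\theta \leq 1,
\]
using $\varphi(e_1,e_2,\,\cdot\,) = \d x_3$ together with $v \perp e_3$. Since $\Phi_0$ is $\Spin(7)$-invariant, the same bound holds on the original frame, and equality forces $\theta = 0$, which will eventually characterise the Cayley planes.

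The only real obstacle is organising the reduction to canonical form, and this dissolves once the two facts $\Spin(7)/\GG_2 \cong \SS^7$ and the decomposition $\Phi_0 = \d x_0 \wedge \varphi + *_7 \varphi$ are in hand; everything else is a direct appeal to the $\GG_2$ calibration argument already carried out in Theorem \ref{G2cab.thm}, together with Lemma \ref{starcab.lem} applied to the self-dual form $\Phi_0$ (which also explains why no separate argument for $*\Phi$ is needed).
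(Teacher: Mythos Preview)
Your proof is correct but takes a genuinely different route from the paper. The paper's argument uses the inclusion $\SU(4)\subset\Spin(7)$ and the Wirtinger canonical form: given a $4$-plane $P$ in $\R^8\cong\C^4$, one chooses $A\in\Spin(7)$ so that $A(P)$ is spanned by
\[
\{e_1,\ \cos\theta_1 e_2+\sin\theta_1 e_3,\ e_5,\ \cos\theta_2 e_6+\sin\theta_2 e_7\},
\]
and then computes directly from the explicit coordinate expression for $\Phi$ that $\Phi|_P=\cos(\theta_1-\theta_2)\vol_P\leq\vol_P$. Your argument instead uses the inclusion $\GG_2\subset\Spin(7)$ via $\Spin(7)/\GG_2\cong\SS^7$: you first send $u_1$ to $e_0$, then use the $\GG_2$ normal form from Theorem~\ref{G2cab.thm} on the remaining three vectors in $\R^7$, and evaluate $\Phi_0=\d x_0\wedge\varphi+*_7\varphi$ to get $\cos\theta$.

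Both reductions are standard; yours is arguably more economical, producing a single angle rather than two and recycling the $\GG_2$ calibration argument wholesale. The paper's $\SU(4)$ approach has the mild advantage that the two-angle normal form is already set up from the proof of Wirtinger's inequality, and it makes the relationship $\Phi=\tfrac{1}{2}\omega^2+\Ree\Upsilon$ (and hence the link to complex and special Lagrangian geometry) more visible. Your final remark invoking Lemma~\ref{starcab.lem} is not needed for the theorem as stated, but the self-duality observation is correct.
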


\begin{proof}
Let $P$ be a plane in $\R^8\cong\C^4$.
Since $\SU(4)\subseteq\Spin(7)$, by the proof of Wirtinger's inequality (Theorem \ref{Wirtinger.thm}), we can choose $A\in\Spin(7)$ so that $A(P)$ is spanned by 
$$\{e_1,\cos\theta_1 e_2+\sin\theta_1e_3,e_5,\cos\theta_2 e_6+\sin\theta_2e_7\}.$$
We take the standard $\Spin(7)$ form $\Phi$ on $\R^8$ to be:
\begin{align*}\Phi=&\;\d x_{1234}+\d x_{1256}+\d x_{1278} +\d x_{1357} -\d x_{1367} -\d x_{1458} -\d x_{1467}\\
&+\d x_{5678}+\d x_{3478} +\d x_{3456} +\d x_{2468} -\d x_{2457} -\d x_{2367} -\d x_{2358},
\end{align*}
again using the notation  $\d x_{ij\ldots k}=\d x_i\wedge\d x_j\wedge\ldots\wedge \d x_k$.
Then $\Phi|_P=(\cos\theta_1\cos\theta_2+\sin\theta_1\sin\theta_2)\vol_P=\cos(\theta_1-\theta_2)\vol_P$.  Hence $\Phi$ is a calibration as it is closed.  
\end{proof}

We can thus define our calibrated submanifolds in $\Spin(7)$ manifolds.

\begin{dfn}
Submanifolds in $(M^8,\Phi)$ calibrated by $\Phi$ are called Cayley 4-folds. 
\end{dfn}

\begin{remark}
The name Cayley submanifolds is because of the relation between the submanifolds and the octonions or Cayley numbers $\O$.
\end{remark}

We can relate Cayley submanifolds to all of the other calibrated geometries we have seen.

\begin{prop}
\begin{itemize}
\item[(a)] Complex surfaces and special Lagrangian 4-folds in $\C^4$ are Cayley in $\R^8=\C^4$.
\item[(b)] Write $\R^8=\R\times\R^7$.  Then $\R\times S$ is Cayley if and only if $S$ is associative in $\R^7$ and $N\subseteq \R^7$ is 
Cayley in $\R^8$ if and only if $N$ is coassociative in $\R^7$.
\end{itemize}
\end{prop}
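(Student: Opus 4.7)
The plan is to prove both parts by establishing two algebraic decompositions of the $\Spin(7)$ calibration $\Phi$ on $\R^8$, one adapted to the splitting $\R^8 = \C^4$ and one to $\R^8 = \R \times \R^7$, and then reading off the proposition from the definitions of the relevant calibrated submanifolds.

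For part (a), I would identify $\C^4$ with $\R^8$ via $z_j = x_{2j-1} + i x_{2j}$, so that $\omega = \sum_{j=1}^4 \d x_{2j-1} \wedge \d x_{2j}$ and $\Upsilon = \d z_1 \wedge \cdots \wedge \d z_4$. Expanding $\tfrac{1}{2}\omega^2$ and $\Ree\Upsilon$ as sums of monomials and matching them (up to a coordinate relabelling reconciling conventions) against the formula for $\Phi$ in the text yields
\[
\Phi = \tfrac{1}{2}\omega^2 + \Ree\Upsilon.
\]
If $N$ is a complex surface, Wirtinger's inequality (Theorem \ref{Wirtinger.thm}) gives $\tfrac{1}{2}\omega^2|_N = \vol_N$, while $\Upsilon$ is of type $(4,0)$ and so restricts to zero on any complex $2$-plane; thus $\Phi|_N = \vol_N$. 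If $N$ is special Lagrangian, then $\omega|_N \equiv 0$ forces $\omega^2|_N = 0$ and by definition $\Ree\Upsilon|_N = \vol_N$; again $\Phi|_N = \vol_N$.

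For part (b), write $\R^8 = \R_{x_1} \times \R^7_{x_2,\ldots,x_8}$, and let $\varphi$ and $*_7 \varphi$ denote the standard $\GG_2$ $3$-form and its Hodge dual on this $\R^7$. A term-by-term comparison of monomials with the formula for $\Phi$ from the text verifies the decomposition
\[
\Phi = \d x_1 \wedge \varphi + *_7 \varphi.
\]
Given a $3$-fold $S \subset \R^7$ and $N = \R \times S$, the $4$-form $*_7\varphi$ has no $\d x_1$ component, but every $4$-form on the $4$-dimensional $N$ must pair with $\partial_{x_1}$; hence $(*_7 \varphi)|_{\R \times S} = 0$, and $\Phi|_N = \d x_1 \wedge \varphi|_S$, which equals $\vol_N = \d x_1 \wedge \vol_S$ if and only if $\varphi|_S = \vol_S$, i.e.\ if and only if $S$ is associative. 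Dually, for a $4$-fold $N \subset \{0\} \times \R^7$, tangent vectors to $N$ lie in $\R^7$ and are annihilated by $\d x_1$, so $(\d x_1 \wedge \varphi)|_N = 0$ and $\Phi|_N = (*_7 \varphi)|_N$, which equals $\vol_N$ if and only if $N$ is coassociative.

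The only real obstacle is bookkeeping: checking the two decompositions of $\Phi$ by comparing monomials and keeping track of signs in the Hodge star and the chosen coordinate identifications of $\R^8$ with $\C^4$ and $\R \times \R^7$ (which may differ by a permutation from the convention used to write $\Phi$). Once those decompositions are in hand, each of the four assertions reduces to a single line combining the definitions of complex, special Lagrangian, associative and coassociative submanifolds with the observation that certain forms restrict trivially for dimension reasons.
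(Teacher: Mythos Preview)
Your proposal is correct and follows essentially the same approach as the paper: both parts rest on the decompositions $\Phi=\tfrac{1}{2}\omega^2+\Ree\Upsilon$ and $\Phi=\d x_1\wedge\varphi+*\varphi$, after which the claims are read off from the definitions together with the vanishing of the complementary summand. Your write-up simply fills in a little more detail (e.g.\ why $\Upsilon$ vanishes on complex $2$-planes and why $*_7\varphi$ restricts to zero on $\R\times S$) than the paper's terse version.
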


\begin{proof}
Recall the K\"ahler form $\omega$ and holomorphic volume form $\Upsilon$ on $\C^4$ and the $\GG_2$ 3-form $\varphi$ on $\R^7$.

Part (a) is immediate from the formula $\Phi=\frac{1}{2}\omega^2+\Ree\Upsilon$, since complex surfaces are calibrated by $\frac{1}{2}\omega^2$, special Lagrangians are calibrated by $\Ree\Upsilon$, $\Upsilon$ vanishes on complex surfaces and $\omega$ vanishes on special 
Lagrangians.

Part (b) follows immediately from the formula $\Phi=\d x_1\wedge\varphi+*\varphi$.
\end{proof}

We can also use an isometric involution to construct Cayley submanifolds as in our previous calibrated geometries.

\begin{prop}
Let $(M,\Phi)$ be a $\Spin(7)$ manifold and let $\sigma\neq\id$ be an isometric involution with $\sigma^*\Phi=\Phi$.  Then $\text{Fix}(\sigma)$ is Cayley 
submanifold, if it is non-empty.
\end{prop}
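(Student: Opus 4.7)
The plan is to follow the template of the analogous propositions for special Lagrangians (Proposition \ref{SLfixedpt.prop}) and for associative/coassociative submanifolds, reducing the problem to a linear-algebraic statement at each fixed point.  First I would invoke the standard fact that the fixed set of an isometric involution is a disjoint union of totally geodesic submanifolds of $M$, with tangent space $T_pN$ at each $p\in N=\text{Fix}(\sigma)$ equal to the $+1$-eigenspace $V_+$ of $A=\d\sigma_p:T_pM\to T_pM$.  Since an isometry of a connected manifold is determined by its $1$-jet at a single point, $\sigma\neq\id$ forces $A\neq I$ at every fixed point, and the hypothesis $\sigma^*\Phi=\Phi$ gives $A\in\mathrm{Stab}_{\OO(T_pM)}(\Phi_p)=\Spin(7)$.

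The problem reduces to the linear statement that any orthogonal involution $A\in\Spin(7)$ on $\R^8$ with $A\neq I$ and $V_+\neq 0$ has $V_+$ a Cayley $4$-plane.  I would reduce further to the corresponding statement in $\GG_2$: pick a unit vector $v\in V_+$; since $\Spin(7)$ acts transitively on $\mathcal{S}^7$ with stabilizer $\GG_2$, conjugate by an element of $\Spin(7)$ so that $v$ is the standard basepoint $1\in\O\cong\R^8$, giving $A\in\GG_2$.  By the identity $\Phi=\d x_1\wedge\varphi+\ast\varphi$ established in the previous proposition, $A|_{v^\perp}$ is an orthogonal involution of $\R^7=v^\perp=\Imm\O$ preserving the $\GG_2$ $3$-form $\varphi$, and is nontrivial since $A\neq I$.

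For a nontrivial involution $B\in\GG_2$ on $\R^7$, I would show the $+1$-eigenspace $W_+$ has dimension $3$ and is associative.  The condition $B^*\varphi=\varphi$ forces the $(2,1)$ and $(0,3)$ parts of $\varphi$ in the $W_+\oplus W_-$ decomposition to vanish, whence under the cross product defined by $\varphi$ we have $W_+\times W_+\subseteq W_+$ and $W_-\times W_-\subseteq W_+$.  The first closure implies $W_+\oplus\R$ is a subalgebra of $\O$, and since the only subalgebras of $\O$ containing $1$ are $\R,\C,\H,\O$, we have $\dim W_+\in\{0,1,3,7\}$.  The constraint $\det B=+1$ forces $\dim W_+$ odd and $A\neq I$ rules out $\dim W_+=7$; the remaining case $\dim W_+=1$ is ruled out by observing that for a unit vector $a\in W_-$ the map $b\mapsto a\times b$ is an isometric embedding of $W_-\cap a^\perp$ (dimension $5$) into $W_+$ (dimension $1$), a contradiction.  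Hence $\dim W_+=3$, $W_+\oplus\R\cong\H$, and $W_+$ is associative.  Putting this together, $V_+=\R v\oplus W_+$ is a $4$-dimensional Cayley plane, so $\Phi|_N=\vol_N$ and $N$ is calibrated; the main obstacle is the dimension count just sketched, combining the classification of subalgebras of $\O$ with the cross-product closure properties forced by the involution.
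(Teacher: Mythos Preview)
The paper states this proposition without proof, exactly as it does for the analogous fixed-point results in the special Lagrangian and $\GG_2$ settings, so there is no argument in the text to compare against.  Your argument is correct and complete: the reduction to linear algebra at a fixed point, the passage from $\Spin(7)$ to $\GG_2$ via the transitive action on $\mathcal{S}^7$, the use of $B^*\varphi=\varphi$ to force $W_+\times W_+\subseteq W_+$ and $W_-\times W_-\subseteq W_+$, and the dimension count combining the subalgebra classification of $\O$ with $\det B=1$ all go through as you describe.  The elimination of $\dim W_+=1$ via the cross-product map $b\mapsto a\times b$ is a clean touch.

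One small remark on scope rather than correctness: the central element $-I$ lies in $\Spin(7)\subseteq\SO(8)$ and is an involution preserving $\Phi$ with $V_+=0$ (e.g.\ $\sigma(x)=-x$ on $\R^8$ has $\text{Fix}(\sigma)=\{0\}$).  Your linear statement explicitly assumes $V_+\neq 0$, so what you actually prove is that every component of $\text{Fix}(\sigma)$ is either an isolated point or a Cayley $4$-fold.  The proposition as written glosses over the isolated-point possibility; your proof handles precisely the case it intends, and the caveat is about the statement, not your argument.
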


\begin{ex}
The first interesting explicit examples of Cayleys in $\R^8$ not arising from other geometries were given in \cite{Lotay2R} and are asymptotic to cones.
\end{ex}

\begin{ex}
The base $\mathcal{S}^4$ in the Bryant--Salamon holonomy $\Spin(7)$ metric on $\SS_+(\mathcal{S}^4)$ \cite{BS} is Cayley.  
%
\end{ex}

We now discuss deformations of a compact Cayley $N$, for which we need  some discussion of algebra related to $\Spin(7)$.
Since $\Lambda^2(\R^8)^*$ is 28-dimensional and the 21-dimensional Lie algebra of $\Spin(7)$ sits inside the space of $2$-forms, we must have a 
distinguished $7$-dimensional subspace $\Lambda^2_7$ of 2-forms on $\R^8$.  So what is this subspace?
 Let $u,v\in\R^8$.  Then we can construct a 2-form $u\wedge v$, viewing $u,v$ as cotangent vectors.  We can also construct a 2-form from $u,v$ by 
 considering $\Phi(u,v,.,.)$.  It is then true that 
 $$\Lambda^2_7=\{u\wedge v+\Phi(u,v,.,.)\,:\,u,v\in\R^8\}.$$
 When $P$ is a Cayley plane and $u,v\in P$ are orthogonal we see that $\Phi(u,v,.,.)=*_P(u\wedge v)$ so that $u\wedge v+\Phi(u,v,.,.)$ is self-dual on $P$.  Since 
 $\Lambda^2_+P^*$ is 3-dimensional, we see that there must be a 4-dimensional space $E$ of 2-forms on $P$ such that $\Lambda^2_7|_P=\Lambda^2_+P^*\oplus E$.
Moreover, if $u\in P$ and $v\in P^{\perp}$ then $m(u,v)=u\wedge v+\Phi(u,v,.,.)\in E$ and the map $m:P\times P^{\perp}\rightarrow E$ is surjective.

Now let us move to a Cayley submanifold $N$ in a $\Spin(7)$ manifold $(M,\Phi)$.  On $M$ we have a rank 7 bundle $\Lambda^2_7$ of 2-forms and we have that
$\Lambda^2_7|_N=\Lambda^2_+T^*N\oplus E$ for some rank 4 bundle $E$ over $N$.  The map $m$ above defines a (Clifford) multiplication 
$m:C^{\infty}(T^*N\otimes \nu(N))\rightarrow C^{\infty}(E)$ (viewing tangent vectors as cotangent vectors via the metric), and thus using the normal connection 
$\nabla^{\perp}:C^{\infty}(\nu(N))\rightarrow C^{\infty}(T^*N\otimes \nu(N))$ we get a linear first order differential operator
$$\Dslash_+=m\circ\nabla^{\perp}:C^{\infty}(\nu(N))\rightarrow C^{\infty}(E).$$
Again this an elliptic operator called the \emph{positive Dirac operator}, but it is not self-adjoint: its adjoint is the negative Dirac operator from $E$
to $\nu(N)$.

\begin{remark} If $N$ is spin, the spinor bundle $\SS$ splits as $\SS_+\oplus\SS_-$, and the Dirac operator $\Dslash$ splits into 
$\Dslash_{\pm}$ from $\SS_{\pm}$ to $\SS_{\mp}$ so that $\Dslash(v_+,v_-)=(\Dslash_-v_-,\Dslash_+v_+)$.  Hence $\Dslash^*=\Dslash$ says that 
$\Dslash_{\pm}^*=\Dslash_{\mp}$.

 It turns out (see, for example, \cite{McLean}) that there exists 
a $\C^2$-bundle $V$ on $N$ so that $\nu(N)\otimes\C=\SS_+\otimes V$, $E\otimes\C=\SS_-\otimes V$ and $\Dslash_+$ on $\nu(N)$ is a ``twist'' of the 
usual positive Dirac operator. However, not every 4-manifold is spin, so we cannot always make this identification.
\end{remark}

On $\O$ there exists 
a 4-fold cross product, whose real part gives $\Phi$ and whose imaginary part we call $\tau$.  Perhaps unsurprisingly, we have the following result, which we will leave as an exercise for the reader.

\begin{lem} A 4-plane $P$ in $\R^8$ satisfies $\tau|_P\equiv 0$ if and only if it admits an orientation so that it is calibrated by $\Phi$.
\end{lem}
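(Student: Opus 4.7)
The plan is to follow the template used earlier for the associative and coassociative characterisations: reduce to a canonical form using $\Spin(7)$-equivariance, then carry out the algebraic check at a single point. The four-fold cross product on $\O = \R^8$ is $\Spin(7)$-equivariant, where $\Spin(7)$ acts on $\Imm\O\cong\R^7$ through its natural quotient to $\SO(7)$; hence $\Phi$ is $\Spin(7)$-invariant (as used earlier) and $\tau$ is $\Spin(7)$-equivariant. In particular, the condition $\tau|_P\equiv 0$ is $\Spin(7)$-invariant, and so is the condition that $P$ admits an orientation calibrated by $\Phi$. Applying the reduction from the proof of the theorem that $\Phi$ is a calibration, I may therefore assume
\[P=\Span\{e_1,\;\cos\theta_1 e_2+\sin\theta_1 e_3,\;e_5,\;\cos\theta_2 e_6+\sin\theta_2 e_7\}\]
for some $\theta_1,\theta_2\in[0,\pi)$, with these four vectors forming an orthonormal basis of $P$.

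On this canonical plane, the computation from that theorem already gives $\Phi|_P=\cos(\theta_1-\theta_2)\vol_P$, so $P$ admits an orientation calibrated by $\Phi$ exactly when $\cos(\theta_1-\theta_2)=\pm 1$, i.e.~$\theta_1\equiv\theta_2\pmod{\pi}$. To finish I need to show that $\tau|_P$ vanishes under precisely the same condition. The cleanest route is via the octonion identity
\[\Phi(u,v,w,x)^2+|\tau(u,v,w,x)|^2=|u\wedge v\wedge w\wedge x|^2,\]
which is equivalent to the statement that the four-fold cross product on $\O$ of pairwise orthogonal vectors has norm equal to the product of their norms. Evaluating both sides on the orthonormal basis of $P$ above gives $|\tau|^2 = \sin^2(\theta_1-\theta_2)$ at that point, which vanishes precisely when $\theta_1\equiv\theta_2\pmod{\pi}$, as required.

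The main obstacle is justifying the cross-product norm identity, which is a purely algebraic fact about the octonions analogous to $|u\times v|^2=|u|^2|v|^2-\langle u,v\rangle^2$ for the two-fold cross product on $\Imm\O$. If citing this identity proves inconvenient, the fallback is a direct coordinate computation: expand the seven scalar components of $\tau|_P$ in the standard basis of $\Imm\O$ using an explicit formula for $\tau$ (analogous to the one displayed for $\Phi$), and verify that each component is a multiple of $\sin(\theta_1-\theta_2)$. Either way, the $\Spin(7)$ reduction collapses the problem to an algebraic identity at a single point, which is the only substantive step.
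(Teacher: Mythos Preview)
The paper does not prove this lemma; it explicitly leaves it ``as an exercise for the reader'', so there is no argument to compare against directly. Your proof is correct. In fact, once you invoke the norm identity
\[
\Phi(u,v,w,x)^2+|\tau(u,v,w,x)|^2=|u\wedge v\wedge w\wedge x|^2,
\]
which is indeed standard (it appears in Harvey--Lawson \cite{HL} and in Harvey's book \cite{Harvey}, both already cited by the paper), the $\Spin(7)$ reduction to canonical form becomes superfluous: for any oriented orthonormal basis $u,v,w,x$ of $P$ the identity reads $\Phi(u,v,w,x)^2+|\tau(u,v,w,x)|^2=1$, and since $\tau$ is an alternating $4$-form (with values in $\Imm\O$) on a $4$-dimensional space, its restriction to $P$ vanishes identically if and only if it vanishes on a single basis. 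Hence $\tau|_P\equiv 0$ is equivalent to $\Phi|_P=\pm\vol_P$, which is precisely the calibration condition up to a choice of orientation. Your route via the norm identity is therefore cleaner than the direct coordinate computations the paper carries out for the analogous associative and coassociative lemmas; the fallback computation you sketch would mirror those arguments more closely, but is not needed.
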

 
 We can extend $\tau$ to a $\Spin(7)$ manifold, except that we need a rank 7 vector bundle on $M$ in which $\tau$ takes values: 
we have one, namely $\Lambda^2_7$. So we have the following alternative characterisation of Cayley 4-folds.

\begin{lem}
A submanifold $N$ in a $\Spin(7)$ manifold is Cayley (up to a choice of orientation) if and only if $\tau\in C^{\infty}(\Lambda^4T^*M;\Lambda^2_7)$ vanishes on $N$. 
\end{lem}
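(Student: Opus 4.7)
The assertion is pointwise and purely algebraic, so it suffices to prove the statement in $\R^8 = \O$: a $4$-plane $P$ is calibrated by $\Phi$ for some choice of orientation if and only if the restriction $\tau|_P \in \Lambda^4 P^* \otimes \Imm\O$ vanishes. My plan mirrors the strategy already used to show that $\Phi$ is a calibration.

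Since $\tau$ is constructed from the octonion $4$-fold cross product, and $\Imm\O \cong \Lambda^2_7$ as $\Spin(7)$-representations, the condition $\tau|_P = 0$ is $\Spin(7)$-invariant. Consequently, just as in the proof that $\Phi$ is a calibration, I would use the $\Spin(7)$ action (via its $\SU(4)$ subgroup) to bring $P$ into the canonical form
$$P = \Span\{e_1,\;\cos\theta_1\, e_2 + \sin\theta_1\, e_3,\;e_5,\;\cos\theta_2\, e_6 + \sin\theta_2\, e_7\}$$
for some angles $\theta_1, \theta_2$. In this form we already know from the calibration proof that $\Phi|_P = \cos(\theta_1 - \theta_2)\vol_P$, so the Cayley condition (for some choice of orientation) is exactly $\theta_1 - \theta_2 \in \pi\Z$.

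The key input I would then establish is the norm identity
$$\Phi(u_1, u_2, u_3, u_4)^2 + |\tau(u_1, u_2, u_3, u_4)|^2 = 1$$
for any oriented orthonormal $4$-frame $u_1, \ldots, u_4$ in $\O$. Granted this, the formula above for $\Phi|_P$ in the standard frame forces $|\tau|_P|^2 = \sin^2(\theta_1 - \theta_2)$, so $\tau|_P = 0$ if and only if $\theta_1 - \theta_2 \in \pi\Z$, which is exactly the Cayley condition for a suitable orientation.

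The main obstacle is the norm identity. Conceptually, it should follow by writing the $4$-fold cross product $X(u_1, \ldots, u_4) = \Phi(u_1,\ldots,u_4) + \tau(u_1,\ldots,u_4) \in \R \oplus \Imm\O$ as an explicit expression in octonion multiplication, and then invoking the multiplicativity of the octonion norm together with the orthonormality of the $u_i$ to deduce that $|X|=1$. A more mechanical alternative is to dispense with the identity entirely and instead compute $\tau|_P$ directly in a convenient basis of $\Lambda^2_7$ using the octonion multiplication table, verifying that each of its seven components is a multiple of $\sin(\theta_1 - \theta_2)$; this is tedious but routine.
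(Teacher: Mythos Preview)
Your proposal is correct. Note, however, that the paper does not actually supply a proof of this lemma: the pointwise version in $\R^8$ is explicitly left as an exercise for the reader, and the manifold version you are asked about is then stated without proof as its immediate consequence (since the condition is pointwise and $\Spin(7)$-invariant). Your reduction to the pointwise statement and use of the $\Spin(7)$ action to reach the standard form is exactly parallel to the paper's proof that $\Phi$ is a calibration, and the norm identity $\Phi^2 + |\tau|^2 = 1$ on orthonormal $4$-frames is the standard device for completing the exercise (it is proved in Harvey--Lawson). Your alternative of directly computing the seven components of $\tau|_P$ on the standard-form plane also works and is in the same spirit as the paper's explicit computations for the associative and coassociative cases.
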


Now suppose that $N$ is a compact Cayley 4-fold.
Then  the zeros of the equation $F(v)=*\exp_v^*(\tau)$ for $v\in C^{\infty}(\nu(N))$ define Cayley deformations (as the graph of $v$).
 We know that  $F$ takes values in $\Lambda^2_7|_N=\Lambda^2_+T^*N\oplus E$ and it turns out that
$$L_0F(v)=*\d(v\lrcorner\tau)=\Dslash_+v$$
since $N$ is Cayley.  So, we potentially have a problem because $F$ does not necessarily take values only in $E$ (and in general it will not just take values 
in $E$).  However, the Cayley condition on $N$ means that $F(v)=0$ if and only $P(v)=\pi_EF(v)=0$, where 
$\pi_E$ is the projection onto $E$ (again, we are using $P$ to denote the nonlinear deformation map as in our previous discussion, and we expect it will not cause confusion given the context).  Then the operator $P:C^{\infty}(\nu(N))\rightarrow C^{\infty}(E)$ and $L_0P=\Dslash_+$ is elliptic.

Again, we cannot say that $L_0P$ is surjective, so we have the following using the same argument as in the lead up to Theorem \ref{assocdef.thm}, cf.~\cite{McLean}.

\begin{thm}\label{Caydef.thm}
The expected dimension of the moduli space of deformations of a compact Cayley 4-fold $N$ in a $\Spin(7)$ manifold is
$\text{\emph{ind}}\,\Dslash_+=\dim\Ker\Dslash_+-\dim\Ker\Dslash_+^*$
 with infinitesimal deformations given by $\Ker\Dslash_+$ on $\nu(N)$. Moreover,
$$\text{\emph{ind}}\,\Dslash_+=\frac{1}{2}\sigma(N)+\frac{1}{2}\chi(N)-[N].[N],$$
where $\sigma(N)=b^2_+(N)-b^2_-(N)$ (the signature of $N$), $\chi(N)=2b^0(N)-2b^1(N)+b^2(N)$ (the Euler characteristic of $N$) and 
$[N].[N]$ is the self-intersection of $N$, which is the Euler number of $\nu(N)$.
\end{thm}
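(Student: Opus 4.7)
The plan is to mirror the proof of Theorem \ref{assocdef.thm} for associatives, applied to the nonlinear Cayley deformation operator $P:C^\infty(\nu(N))\to C^\infty(E)$ constructed just before the statement, and then to identify $\mathrm{ind}\,\Dslash_+$ topologically via the Atiyah--Singer index theorem.

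For the smoothness and dimension statements, I would work in H\"older spaces $X=C^{k+1,a}(\nu(N))$ and $Y=C^{k,a}(E)$, so that $P:U\subseteq X\to Y$ is smooth with $P(0)=0$ and linearisation $L_0P=\Dslash_+$ elliptic on the compact manifold $N$, hence Fredholm. Setting $\mathcal{I}=\Ker\Dslash_+$ and picking a finite-dimensional complement $\mathcal{O}$ to $\Dslash_+(X)$ in $Y$ (with $\mathcal{O}$ isomorphic to $\Ker\Dslash_+^*$ via the $L^2$-pairing), the Fredholm Implicit Function Theorem argument given before Theorem \ref{assocdef.thm} yields a smooth obstruction map $\pi:\mathcal{U}\subseteq\mathcal{I}\to\mathcal{O}$ whose zero set is locally homeomorphic to $P^{-1}(0)$; Sard's theorem then shows the expected dimension is $\dim\mathcal{I}-\dim\mathcal{O}=\mathrm{ind}\,\Dslash_+$, with tangent space $\Ker\Dslash_+$. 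Elliptic regularity for $\Dslash_+$ (bootstrapped as for the minimal surface equation earlier) ensures any $C^{1,a}$-solution of $P(v)=0$ is smooth, so the moduli space parameterises genuine Cayley submanifolds.

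The main obstacle is the index computation. Using the Remark preceding the statement, when $N$ is spin one has rank-$2$ complex bundles $V$ and $\SS_\pm$ on $N$ with $\nu(N)\otimes\C\cong\SS_+\otimes V$ and $E\otimes\C\cong\SS_-\otimes V$, so $\Dslash_+$ is a twisted Dirac operator and Atiyah--Singer gives
$$\mathrm{ind}\,\Dslash_+=\int_N\hat{A}(N)\,\mathrm{ch}(V)=\int_N\left(-\tfrac{1}{12}p_1(N)+\tfrac{1}{2}\bigl(c_1(V)^2-2c_2(V)\bigr)\right).$$
The key algebraic step is to use the $\Spin(7)$ representation theory behind the splitting $\Lambda^2_7|_N=\Lambda^2_+T^*N\oplus E$ to rewrite $c_1(V)^2-2c_2(V)$ in terms of the Pontrjagin and Euler classes of $TN$ and $\nu(N)$. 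Combined with the Hirzebruch signature theorem $\int_N p_1(N)=3\sigma(N)$, Gauss--Bonnet $\int_N e(TN)=\chi(N)$, and the self-intersection identity $\int_N e(\nu(N))=[N].[N]$, the claimed formula $\tfrac{1}{2}\sigma(N)+\tfrac{1}{2}\chi(N)-[N].[N]$ then falls out after some bookkeeping. For non-spin $N$ the identifications only hold virtually as $\mathrm{Spin}^c$ bundles, but the resulting formula is a topological identity that remains valid.
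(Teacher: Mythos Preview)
Your proposal is correct and follows essentially the same approach as the paper for the deformation-theoretic part: the paper simply says ``using the same argument as in the lead up to Theorem \ref{assocdef.thm}'', which is precisely the Fredholm Implicit Function Theorem / obstruction map argument you spell out. For the index formula the paper gives no argument at all, merely citing \cite{McLean}; your Atiyah--Singer sketch via the twisted Dirac identification $\nu(N)\otimes\C\cong\SS_+\otimes V$ is exactly McLean's approach, so here you are supplying more detail than the paper rather than diverging from it.
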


\begin{ex}
For the Cayley $N=\mathcal{S}^4$ in $\SS_+(\mathcal{S}^4)$, $\nu(N)=\SS_+(\mathcal{S}^4)$ and $\Dslash_+$ is the usual positive Dirac operator.  Again, since 
$N$ has positive scalar curvature, we see that $\Ker\Dslash_{\pm}=\{0\}$ so $N$ is rigid.
\end{ex}

\begin{remark}
Theorem \ref{Caydef.thm} has been extended to various other non-compact, singular and boundary settings, for example in \cite{Moore.CS, Ohst1, Ohst2}.
\end{remark}

\section{The angle theorem}\label{s:angle}

To conclude these notes, we now discuss a very natural and elementary problem in Euclidean geometry where calibrations play a major, and perhaps unexpected, role.

If one takes two lines in $\R^2$ intersecting transversely, then their union is never length-minimizing.  A natural question to ask is: does 
this persist in higher dimensions?  In other words, when is the union of two transversely intersecting $n$-planes in $\R^{2n}$ volume-minimizing?  Two such planes 
are determined by the $n$ angles between them as follows.

\begin{lem}\label{angles.lem}
Let $P,Q$ be oriented $n$-planes in $\R^{2n}$.  There exists an orthonormal basis $e_1,\ldots,e_{2n}$ for $\R^{2n}$ such that 
$P=\Span\{e_1,\ldots,e_n\}$ and
$$Q=\Span\{\cos\theta_1 e_1+\sin\theta_1e_{n+1},\ldots,\cos\theta_n e_n+\sin\theta_n e_{2n}\}$$
where $0\leq\theta_1\leq\ldots\leq\theta_{n-1}\leq\frac{\pi}{2}$ and $\theta_{n-1}\leq\theta_n\leq\pi-\theta_{n-1}$.  These angles are 
called the \emph{characterising angles} of $P,Q$.
\end{lem}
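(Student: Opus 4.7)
My plan is to prove this by an iterative extremal construction, closely following the strategy used in the proof of Wirtinger's inequality (Theorem \ref{Wirtinger.thm}).

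First, I would maximize $\langle u,v\rangle$ over unit vectors $u\in P$, $v\in Q$. After replacing $v$ by $-v$ if necessary, I may assume the maximum is nonnegative; call it $\cos\theta_1$ with $\theta_1\in[0,\pi/2]$, achieved at $u_1\in P$, $v_1\in Q$. I set $e_1=u_1$ and, provided $\theta_1\in(0,\pi/2]$, define $e_{n+1}=(v_1-\cos\theta_1\,e_1)/\sin\theta_1$, so that $v_1=\cos\theta_1\,e_1+\sin\theta_1\,e_{n+1}$; if $\theta_1=0$ then $v_1=u_1$ and I simply pick any unit vector $e_{n+1}$ orthogonal to $P+Q$.

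The key step is then to exploit the first-order conditions at the maximum. For any unit $w\in P$ with $w\perp u_1$, differentiating $s\mapsto \langle \cos s\,u_1+\sin s\,w,\,v_1\rangle$ at $s=0$ forces $\langle w,v_1\rangle=0$; symmetrically, for any unit $z\in Q$ with $z\perp v_1$ we get $\langle u_1,z\rangle=0$. Consequently, the $(n-1)$-dimensional subspaces $P\cap u_1^{\perp}$ and $Q\cap v_1^{\perp}$ both lie in $\Span\{u_1,v_1\}^{\perp}=\Span\{e_1,e_{n+1}\}^{\perp}$, a $(2n-2)$-dimensional Euclidean space. Each inherits an orientation from $P$ and $Q$ respectively (so that together with $u_1$, resp.\ $v_1$, it recovers the given orientation), and I then apply the inductive hypothesis to produce the remaining vectors $e_2,\ldots,e_n,e_{n+2},\ldots,e_{2n}$ with the desired form and angle bounds.

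Finally I must verify the angle ordering. Since each successive $\cos\theta_i$ is obtained as a maximum over smaller subspaces than the previous one, the sequence $\cos\theta_1\geq\cos\theta_2\geq\cdots\geq\cos\theta_{n-1}$ is monotone, giving $0\leq\theta_1\leq\cdots\leq\theta_{n-1}\leq\pi/2$. The last angle is the delicate part I expect to be the main obstacle: when the induction reduces to the pair of 1-dimensional leftover subspaces, the orientations of $P$ and $Q$ pin down the final unit vectors $u_n,v_n$ up to a common sign, which I absorb by choosing the sign of $e_{2n}$ so that $\theta_n\in[0,\pi]$, but I can no longer flip $v_n$ independently. To obtain $\theta_{n-1}\leq\theta_n\leq\pi-\theta_{n-1}$, I would return to the penultimate inductive step, where the maximization was over \emph{all} unit vectors in 2-dimensional subspaces; by replacing $v$ with $-v$ there, the bound $|\langle u,v\rangle|\leq\cos\theta_{n-1}$ holds for \emph{all} such unit $u,v$, so in particular $|\cos\theta_n|=|\langle u_n,v_n\rangle|\leq\cos\theta_{n-1}$, which is exactly the required range. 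The base case $n=1$ is trivial, and once the orientation bookkeeping is settled, the rest of the argument is a straightforward induction.
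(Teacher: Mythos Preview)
Your approach is essentially the same as the paper's: both proceed by iteratively maximizing $\langle u,v\rangle$ over unit vectors in the remaining orthogonal complements within $P$ and $Q$, using the first-order conditions to split off a $2$-plane at each stage, exactly as in the proof of Wirtinger's inequality. The paper's proof is only a brief sketch; your version supplies the details the paper omits, in particular the orientation bookkeeping and the argument for the range $\theta_{n-1}\leq\theta_n\leq\pi-\theta_{n-1}$ at the final step.
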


\begin{proof}
The proof is very similar to the argument in the proof of Wirtinger's inequality (Theorem \ref{Wirtinger.thm}).  Choose unit $e_1\in P$ and maximise $\langle e_1,u_1\rangle$ for $u_1\in Q$, and let 
$e_{n+1}\in P^{\perp}$ be defined by $u_1=\cos\theta_1e_1+\sin\theta_1e_{n+1}$.  Now choose $e_2\in P\cap e_1^{\perp}$ and maximise 
$\langle e_2,u_2\rangle$ for $u_2\in Q\cap u_1^{\perp}$, then proceed by induction.  
\end{proof}

If the characterising angles of $P,Q$ are $\theta_1,\ldots,\theta_n$, then the characterising angles of $P,-Q$ are $\psi_1,\ldots,\psi_n$ where 
$\psi_j=\theta_j$ for $j=1,\ldots,n-1$ and $\psi_n=\pi-\theta_n$.

The idea of the following theorem is that the union of $P\cup Q$ is area-minimizing if $P,-Q$ are not too close together \cite{Lawlor}.

\begin{thm}[Angle Theorem]
Let $P,Q$ be oriented transverse $n$-planes in $\R^{2n}$ and let $\psi_1,\ldots,\psi_n$ be the characterising angles between $P,-Q$.  Then $P\cup Q$ is volume-minimizing if and only if 
$\psi_1+\ldots+\psi_n\geq \pi$.
\end{thm}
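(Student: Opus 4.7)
I would prove the two implications of the biconditional separately.

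\emph{Sufficient direction} ($\sum\psi_i\geq\pi \Rightarrow P\cup Q$ is volume-minimizing): The plan is to construct a closed $n$-form $\eta$ on $\R^{2n}$ satisfying $\eta\leq\vol$ on every oriented $n$-plane and $\eta|_P=\vol_P$, $\eta|_Q=\vol_Q$. Theorem \ref{calibmin.thm}, applied inside a large ball $B_R$, then forces $\vol((P\cup Q)\cap B_R)\leq\vol(N')$ for every competitor $N'$ sharing boundary with $(P\cup Q)\cap B_R$, which proves volume-minimality. To build $\eta$, I use the adapted basis from Lemma \ref{angles.lem} to place a complex structure $J(e_i)=e_{n+i}$ on $\R^{2n}\cong\C^n$ so that $P=P(0)$ and, after absorbing orientation, $-Q=P(\psi)$ in the notation of the Lawlor-neck example. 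In the critical case $\sum\psi_i=\pi$, both $P$ and $-Q$ are special Lagrangian planes with phases $\pm 1$, and $\Ree\Upsilon$ simultaneously calibrates each. In the strict case $\sum\psi_i>\pi$ no single $\Ree(e^{-i\alpha}\Upsilon)$ calibrates both planes, and one must construct $\eta$ as Lawlor's paired calibration: an explicit combination of forms $\Ree(e^{-i\alpha}\Upsilon)$ averaged over a phase interval determined by $\psi$, tuned so that $\eta|_P=\vol_P$ and $\eta|_Q=\vol_Q$ while preserving the pointwise comass bound $\eta\leq\vol$.

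\emph{Necessary direction} ($\sum\psi_i<\pi \Rightarrow P\cup Q$ is not volume-minimizing): I produce a smooth competitor with strictly smaller volume inside a large ball. Using the identification $\R^{2n}\cong\C^n$ above, choose angles $\phi=(\phi_1,\ldots,\phi_n)\in(0,\pi)^n$ with $\sum\phi_i=\pi$ and $\phi_i\geq\psi_i$ componentwise, which is possible precisely because $\sum\psi_i<\pi$. By the earlier Lawlor-neck discussion there is a smooth special Lagrangian $L_\phi$ asymptotic to $P(0)\cup P(\phi)$, calibrated by $\Ree\Upsilon$. A suitably scaled copy of $L_\phi$ can be spliced into the singular core of $P\cup Q$, producing a smooth surface $N'$ agreeing with $P\cup Q$ on $\partial B_R$ up to a thin transition annulus; the calibration property of $L_\phi$ gives a strict volume saving on the interior neck region, which dominates the bounded transition cost once $R$ is large, yielding $\vol(N')<\vol((P\cup Q)\cap B_R)$.

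The main obstacle is the sufficient direction for strict $\sum\psi_i>\pi$: the equality case is clean because $\Ree\Upsilon$ from Harvey--Lawson's special Lagrangian calibration simultaneously calibrates both planes, but for strict inequality Lawlor's paired calibration must be constructed and the pointwise comass inequality verified on every oriented $n$-plane in $\R^{2n}$. This is precisely where the angle-sum hypothesis enters nontrivially and is the technical heart of the theorem. A subsidiary obstacle in the necessary direction is the quantitative splicing estimate: one must control the transition annulus cost uniformly in $R$ and show it is dominated by the interior saving, which requires carefully choosing the scaling of $L_\phi$ relative to $R$.
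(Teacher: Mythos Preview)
Your overall two-direction strategy matches the paper's, but both halves have real gaps, and in the sufficient direction the specific mechanism you propose cannot work.

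\textbf{Sufficient direction.} The critical case $\sum\psi_i=\pi$ is fine: $\Ree\Upsilon$ calibrates $P$ and $Q$ simultaneously. But for strict $\sum\psi_i>\pi$ your proposal --- a combination of forms $\Ree(e^{-i\alpha}\Upsilon)$ averaged over a phase interval --- is doomed. Any real-linear combination of the $\Ree(e^{-i\alpha}\Upsilon)$ is again $\Ree(c\Upsilon)$ for a single $c\in\C$, i.e.\ just another special Lagrangian calibration with one phase. On a Lagrangian plane of phase $e^{i\theta}$ such a form restricts to $|c|\cos(\theta-\arg c)\,\vol$, and demanding value~$1$ on two planes whose phases differ by $\sum\psi_i-\pi\neq 0$ forces $|c|>1$, violating the comass bound. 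The complex structure alone is too rigid. The paper instead uses the \emph{Nance calibration}
\[
\eta=\Ree\big((\d x_1+u_1\d y_1)\wedge\cdots\wedge(\d x_n+u_n\d y_n)\big),\qquad u_j\in\mathcal{S}^2\subseteq\Imm\H,
\]
allowing distinct imaginary-quaternion directions $u_j$. The angle hypothesis becomes the triangle-type inequality $\theta_n\le\theta_1+\cdots+\theta_{n-1}$, which is exactly the condition for a closed spherical $n$-gon on $\mathcal{S}^2$ with side-lengths $\theta_j$ to exist; its vertices $w_j$ determine the $u_j$ via $\cos\theta_j+u_j\sin\theta_j=w_j\overline{w}_{j+1}$. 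That $\eta$ has comass~$1$ is proved by a separate induction on ``torus forms''. The quaternionic input is essential.

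\textbf{Necessary direction.} Your splicing scheme is shakier than you suggest: the Lawlor neck $L_\phi$ you pick is asymptotic to $P(0)\cup P(\phi)$ with $\phi\neq\psi$, so one end does not match $Q=P(\psi)$ even as a set, and the angular mismatch $|\phi-\psi|$ contributes a transition cost at every scale that does not obviously lose to the interior saving. The paper sidesteps splicing altogether. It shows, via a degree argument on the map from neck-parameters $(a_1,\ldots,a_n)$ to the angle functions $\theta_j(t)$, that one can choose a Lawlor neck $N$ calibrated by $\Imm\Upsilon$ which \emph{intersects} each of $P$ and $Q$ in a compact hypersurface (an ellipsoid). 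The bounded portion of $P\cup Q$ and the corresponding portion of $N$ then cobound, and Stokes' Theorem compares them directly: $\Imm\Upsilon$ equals the volume form on $N$ but is strictly smaller on $P\cup Q$ (since $\sum\psi_i<\pi$ means neither plane has phase $\pm i$), so the neck piece has strictly less volume. No gluing estimates are required.
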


Notice that this criteria is impossible to fulfill in 1 dimension.

\begin{proof}
We will sketch the proof which involves calibrations in a fundamental way in both directions.  For details, look at \cite{Harvey}.  

First if $P\cup Q$ does not satisfy the angle condition, we can 
choose coordinates by Lemma \ref{angles.lem} so that $P=P(-\frac{\psi}{2})$ and $-Q=P(\frac{\psi}{2})$ where $\psi=(\psi_1,\ldots,\psi_n)$ and $P(\psi)=\{(e^{i\psi_1,}x_1,\ldots,e^{i\psi_n}x_n)\,:\,(x_1,\ldots,x_n)\in\R^n\}$ as given earlier.  
We know that we have a special Lagrangian Lawlor neck $N$ asymptotic to $P(-\frac{\psi'}{2})\cup P(\frac{\psi'}{2})$ for any $\psi'$ where $\sum_{i=1}^n\psi'_i=\pi$.  
The claim is then that since $\sum \psi_i<\pi$ we can find $\psi'$ so that $\sum\psi'_i=\pi$ and $N\cap P(\pm\frac{\psi'}{2})$ is non-empty (in fact, an ellipsoid).  
This is actually a way to characterise $N$.  Hence since $N$ is calibrated by $\Imm\Upsilon$ and $\Imm\Upsilon|_{P\cup Q}<\vol_{P\cup Q}$, 
$P\cup Q$ cannot be volume-minimizing by the usual Stokes' Theorem argument for calibrated submanifolds.  

We now provide a few extra details, for which we need to describe $N$.  For maps $z_1,\ldots,z_n:\R\rightarrow\C$ define 
$$N=\{(t_1z_1(s),\ldots,t_n z_n(s))\in\C^n\,:\,s\in\R,\,t_1,\ldots,t_n\in\R,\, \sum_{j=1}^nt_j^2=1\}.$$
It is not difficult to calculate that $N$ is special Lagrangian with phase $i$ (so calibrated by $\Imm\Upsilon$) if and only if
$$\overline{z_j}\frac{\d z_j}{\d s}=i f_j \overline{z_1\ldots z_n}$$
for positive real functions $f_j$.  

Suppose that $f_j=1$ for all $j$.  Write $z_j=r_je^{i\theta_j}$, let $\theta=\sum_{j=1}^n\theta_j$ and suppose that $z_j(0)=c_j>0$.  From the differential equation, one quickly sees that $r_j^2=c_j^2+u$ for some function $u$ with $u(0)=0$ and $r_1\ldots r_n\cos\theta=c_1\ldots c_n$.  

If we now suppose that $u=t^2$, we see that 
$$\theta_j(t)=\int_0^{t}\frac{a_j\d t}{(1+a_jt^2)\sqrt{\frac{1}{t^2}\big((1+a_1t^2)\ldots(1+a_nt^2)-1\big)}}$$
where $a_j=c_j^{-2}$.  We observe that $\theta\rightarrow\pm\frac{\pi}{2}$ as $t\rightarrow\pm\infty$ and hence $N$, which is a Lawlor neck, is asymptotic to a pair of planes where the sum of the angles is $\pm\frac{\pi}{2}$.

Now fix $t>0$ and define 
$$f:X=\{(a_1,\ldots,a_n)\in\R^n\,:\,a_j\geq 0\}\rightarrow Y=\{(\theta_1,\ldots,\theta_n)\in\R^n\,:\,\theta_j\geq 0,\sum_{j=1}^n\theta_j<\frac{\pi}{2}\}$$
by $f(a_1,\ldots,a_n)=(\theta_1,\ldots,\theta_n)$ where
$$\theta_j=\int_0^{t}\frac{a_j\d t}{(1+a_jt^2)\sqrt{\frac{1}{t^2}\big((1+a_1t^2)\ldots(1+a_nt^2)-1\big)}}.$$
It is clear that if $n=1$, $f:X\rightarrow Y$ is surjective.  We want to show it is surjective for all $n$.
 For $\theta\in (0,\frac{\pi}{2})$ define $H_{\theta}=\{(\theta_1,\ldots,\theta_n)\in Y\,:\,\sum_{j=1}^n\theta_j=\theta\}$.  By our discussion above we see 
that $$f^{-1}(H_{\theta})\subseteq S_{\theta}=\{(a_1,\ldots,a_n)\in X\,:\,(1+a_1t^2)\ldots(1+a_nt^2)=\cos^{-2}t\}.$$
Notice that if the degree of $f:\partial S_{\theta}\rightarrow\partial H_{\theta}$ is 1 then the degree of 
$f:S_{\theta}\rightarrow H_{\theta}$ is 1.  Thus, by induction on $n$, we see  that $f:S_{\theta}\rightarrow H_{\theta}$ is surjective.  

Now, given any plane $\{(e^{i\theta_1}x_1,\ldots e^{i\theta_n}x_n)\,:\,(x_1,\ldots,x_n)\in\R^n\}$ where 
$(\theta_1,\ldots,\theta_n)\in\ Y$, $\theta_j\neq 0$ for all $j$, we see that we can choose a Lawlor neck $N$ which intersects the plane in a hypersurface as claimed.

We now move to the other direction in the statement of the Angle Theorem. If $P\cup Q$ does satisfy the angle condition, then (by choosing coordinates so that $P=\R^n$ and $Q$ is in standard position) we claim that it is calibrated by a so-called \emph{Nance calibration}:
$$\eta(u_1,\ldots,u_n)=\Ree\big((\d x_1+u_1\d y_1)\wedge\ldots\wedge(\d x_n+u_n\d y_n)\big)$$
where $u_1,\ldots,u_n\in\mathcal{S}^2\subseteq\Im\H$.  If $u_m=i$ for all $m$ then $\eta=\Ree\Upsilon$, so it is believable that it is a calibration in general, but we now show that it is indeed true.  

Let $x_1,y_1,\ldots,x_n,y_n$ be coordinates on $\R^{2n}$.  
We call an $n$-form $\eta$ on $\R^{2n}$ a torus form
if $\eta$ lies in the span of forms of type  $$\d x_{i_1}\wedge\ldots\wedge\d x_{i_k}\wedge \d y_{j_1}\wedge\ldots\wedge\d y_{j_l}$$ where $\{i_1,\ldots,i_k\}\cap\{j_1,\ldots,j_l\}=\emptyset$ 
and  $\{i_1,\ldots,i_k\}\cup\{j_1,\ldots,j_l\}=\{1,\ldots,n\}$.  We now claim that a torus form $\eta$ is a calibration if and only if 
$$\eta(\cos\theta_1e_1+\sin\theta_1e_{n+1},\ldots,\cos\theta_ne_n+\sin\theta_{n}e_{2n})\leq 1$$
for all $\theta_1,\ldots,\theta_n\in\R$.

For $n=1$, $\eta=\d x_1\wedge\d y_1$ which is a calibration.  Suppose that the result holds for $n=k$.  
Let $\eta$ be a torus form on $\R^{2(k+1)}$ and rescale $\eta$ so that the maximum of $\eta$ is $1$ and is attained at some plane.  
The idea is to show using the argument in the proof of Wirtinger's inequality to put planes in standard position that we can write 
$\eta=e_1\wedge \eta_1+e_2\wedge \eta_2$ where $e_1,e_2$ are orthonormal and span an $\R^2$ and $\eta_1,\eta_2$ are torus forms on $\R^{2k}$.  The claim then follows by induction on $n$.

Hence, the Nance calibration $\eta$ above is a calibration and moreover 
we know $P(\theta)$ is calibrated by $\eta(u)$ if and only if 
$$\prod_{j=1}^n(\cos\theta_j+\sin\theta_ju_j)=1.$$
We then just need to find the $u_j$ determined by $\theta_j$.  Notice that the condition that $\psi_1+\ldots+\psi_n\geq\pi$ holds if and only if
 $\theta_n\leq\theta_1+\ldots+\theta_{n-1}$.  
If we write $\cos\theta_j+\sin\theta_ju_j=w_j\overline{w}_{j+1}$ where $w_{n+1}=w_1$ and $w_j$ are unit imaginary quaternions then the product condition is
 satisfied and we just need 
$\langle w_j,w_{j+1}\rangle =\cos\theta_j$, which is equivalent to finding $n$ points on the unit 2-sphere so that $d(w_j,w_{j+1})=\theta_j$, where $\theta_n\leq
\theta_1+\ldots+\theta_{n-1}$. This is indeed possible, by considering an $n$-sided spherical polygon.
\end{proof}


\begin{thebibliography}{99}
\bibitem{BLawson} S.~Brendle, \textit{Embedded minimal tori in $S^3$ and the Lawson conjecture}, Acta Math.~{\bf 211}  (2013),  no.~2, 177--190.\vspace{-4pt}
\bibitem{BryantOct} R.~L.~Bryant, {\it Submanifolds and special structures on the octonians}, J.~Differential Geom.~{\bf 17} (1982), 185--232.\vspace{-4pt}
\bibitem{Bryant.embed} R.~L.~Bryant,  \textit{Calibrated embeddings in the special Lagrangian and coassociative cases}, 
 Ann.~Global Anal.~Geom.~{\bf 18}  (2000),  no.~3-4, 405--435.\vspace{-4pt}
\bibitem{Bryant.SL} R.~L.~Bryant, \textit{Second order families of special Lagrangian 3-folds},
 Perspectives in Riemannian geometry, 
 pp.~63--98, CRM Proc.~Lecture Notes {\bf 40}, Amer.~Math.~Soc., Providence, RI,  2006. \vspace{-4pt}
 \bibitem{BS}R.~L.~Bryant and S.~Salamon, {\it On the construction of some complete metrics with exceptional holonomy}, Duke Math.~J.~{\bf 58} (1989), 829--850.\vspace{-4pt}
 \bibitem{Butscherdef} A. Butscher, {\it Deformations of minimal Lagrangian submanifolds with boundary}, Proc.~Amer. Math.~Soc. {\bf 131} (2002) 1953--1964.\vspace{-4pt}
\bibitem{Butscher} A.~Butscher, {\it Regularizing a singular special Lagrangian variety}, Comm.~Anal.~Geom.~{\bf 12} (2004), 733--791.\vspace{-4pt}
\bibitem{Carberry} E.~Carberry, {\it Associative cones in the imaginary octonions},
 Riemann surfaces, harmonic maps and visualization, 
 pp.~249--263, OCAMI Stud.~{\bf 3}, Osaka Munic.~Univ.~Press, Osaka,  2010.\vspace{-4pt}
\bibitem{Carberry.McIntosh} E.~Carberry and I.~McIntosh, {\it Minimal Lagrangian 2-tori in $\mathbb{CP}^2$ come in real families of every dimension},
 J.~London Math.~Soc.~(2) {\bf 69}  (2004),  no.~2, 531--544.\vspace{-4pt}
 \bibitem{CHNP}  A.~Corti, M.~Haskins, J.~Nordstr\"om and T.~Pacini, {\it $\GG_2$-manifolds and associative submanifolds via semi-Fano 3-folds}, Duke Math.~J.~{\bf 164} (2015), no. 10, 1971--2092.\vspace{-4pt}
 \bibitem{EguchiHanson} T. Eguchi and A.~J. Hanson, {\it Asymptotically flat self-dual solutions to Euclidean gravity}, Phys.~Lett. {\bf 74B} (3) (1978), 249--251.
 \bibitem{Fox.thesis} D.~Fox, \textit{Second order families of coassociative 4-folds},
Thesis (Ph.D.)â Duke University,
ProQuest LLC, Ann Arbor, MI,  2005.\vspace{-4pt}
\bibitem{Fox.coass} D.~Fox, \textit{Coassociative cones ruled by 2-planes},
 Asian J.~Math.~{\bf 11}  (2007),  no.~4, 535--553.\vspace{-4pt}
\bibitem{Fox.Cayley} D.~Fox, \textit{Cayley cones ruled by 2-planes: desingularization and implications of the twistor fibration},
  Comm.~Anal.~Geom.~{\bf 16}  (2008),  no.~5, 937--968.\vspace{-4pt}
  \bibitem{Gayet} D.~Gayet, 
{\it Smooth moduli spaces of associative submanifolds}, 
Q.~J.~Math. {\bf 65} (2014), no.~4, 1213--1240.\vspace{-4pt}
\bibitem{GayetWitt}  D.~Gayet and F.~Witt, {\it Deformations of associative submanifolds with boundary}, Adv.~Math.~{\bf 226} (2011), no.~3, 2351--2370.\vspace{-4pt}
\bibitem{Goldstein}  E.~Goldstein, \textit{Calibrated fibrations on noncompact manifolds via group actions},
 Duke Math.~J.~{\bf 110}  (2001),  no.~2, 309--343.\vspace{-4pt}
 \bibitem{Gray} A.~Gray, E.~Abbena and S.~Salamon, {\it Modern differential geometry of curves and surfaces with Mathematica\textsuperscript{\textregistered}}, Third edition, Studies in Advanced Mathematics, Chapman \& Hall/CRC, Boca Raton, FL, 2006. \vspace{-4pt}
 \bibitem{Harvey} R.~Harvey, {\it Spinors and calibrations}, Perspectives in Mathematics {\bf 9}, Academic Press, Inc., Boston, MA, 1990.\vspace{-4pt}
 \bibitem{HL} R.~Harvey and H.~B.~Lawson, {\it Calibrated geometries}, 
 Acta Math.~{\bf 148}  (1982), 47--157.\vspace{-4pt}
\bibitem{Haskins.SLcones1} M.~Haskins, \textit{Special Lagrangian cones},
 Amer.~J.~Math.~{\bf 126}  (2004),  no.~4, 845--871.\vspace{-4pt}
\bibitem{Haskins.SLcones2} M.~Haskins, {\it The geometric complexity of special Lagrangian $T^2$-cones},
 Invent.~Math.~{\bf 157}  (2004),  no.~1, 11--70.\vspace{-4pt}
 \bibitem{HaskinsKapouleas} M.~Haskins and N.~Kapouleas, {\it Special Lagrangian cones with higher genus links}, Invent.~Math.~{\bf 167} (2007), 223--294.\vspace{-4pt}
\bibitem{HIPenrose} G.~Huisken and T.~Ilmanen, \textit{The inverse mean curvature flow and the Riemannian Penrose inequality},
 J.~Differential Geom.~{\bf 59}  (2001),  no.~3, 353--437.\vspace{-4pt}
 \bibitem{IJO} Y.~Imagi, D.~D.~Joyce and J.~Oliveira dos Santos,  \textit{Uniqueness results for special Lagrangians and Lagrangian mean curvature flow expanders in $\C^m$},
 Duke Math.~J.~{\bf 165}  (2016),  no.~5, 847--933.\vspace{-4pt}
\bibitem{Ionel.SL}  M.~Ionel, {\it Second order families of special Lagrangian submanifolds in $\C^4$},
 J.~Differential Geom.~{\bf 65}  (2003),  no.~2, 211--272.\vspace{-4pt}
\bibitem{Ionel.Kar.Minoo} M.~Ionel, S.~Karigiannis and  M.~Min-Oo, {\it Bundle constructions of calibrated submanifolds in $\R^7$ and $\R^8$},
 Math.~Res.~Lett.~{\bf 12}  (2005),  no.~4, 493--512.\vspace{-4pt}
\bibitem{Ionel.Minoo}  M.~Ionel and M.~Min-Oo, {\it Cohomogeneity one special Lagrangian 3-folds in the deformed and the resolved conifolds}, 
 Illinois J.~Math.~{\bf 52}  (2008),  no.~3, 839--865.\vspace{-4pt}
 \bibitem{IMNYau} K.~Irie, F.~Marques and A.~Neves, {\it Density of minimal hypersurfaces for generic metrics}, Ann.~of Math.~(2) {\bf 187} (2018), no.~3, 963--972.\vspace{-4pt}
 \bibitem{Joyce.evol} D.~D.~Joyce,  \textit{Evolution equations for special Lagrangian 3-folds in $\C^3$},
 Ann.~Global Anal.~Geom.~{\bf 20}  (2001),  no.~4, 345--403.\vspace{-4pt}
\bibitem{Joyce.quadrics} D.~D.~Joyce,  {\it Constructing special Lagrangian $m$-folds in $\C^m$ by evolving quadrics},
 Math.~Ann.~{\bf 320}  (2001),  no.~4, 757--797.\vspace{-4pt}
\bibitem{Joyce.ruled} D.~D.~Joyce,  \textit{Ruled special Lagrangian 3-folds in $\C^3$},
 Proc.~London Math.~Soc.~(3)  {\bf 85}  (2002),  no.~1, 233--256.\vspace{-4pt}
\bibitem{Joyce.sym} D.~D.~Joyce,  \textit{Special Lagrangian $m$-folds in $\C^m$ with symmetries},
 Duke Math.~J.~{\bf 115}  (2002),  no.~1, 1--51.\vspace{-4pt}
\bibitem{Joyce.U1fib}  D.~D.~Joyce, \textit{$U(1)$-invariant special Lagrangian 3-folds in $\C^3$ and special Lagrangian fibrations}, 
 Turkish J.~Math.~{\bf 27}  (2003),  no.~1, 99--114.\vspace{-4pt}
\bibitem{JoyceCS5}  D.~D.~Joyce, {\it Special Lagrangian submanifolds
with isolated conical singularities.~V.~Survey and applications}, J.~Differential Geom.~{\bf 63} (2003), 279--347.\vspace{-4pt}
\bibitem{JoyceCS2} D.~D.~Joyce, {\it Special Lagrangian submanifolds with isolated conical singularities.~II.~Moduli spaces}, Ann.~Global Anal.~Geom.~{\bf 25} (2004), 301--352.\vspace{-4pt}
\bibitem{JoyceCS3} D. D. Joyce, {\it Special Lagrangian submanifolds
with isolated conical singularities.~III.~Desingularization, the unobstructed case}, Ann.~Global
Ann.~Geom.~{\bf 26} (2004), 1--58.\vspace{-4pt}
\bibitem{JoyceCS4} D.~D.~Joyce, {\it Special Lagrangian submanifolds
with isolated conical singularities.~IV.~Desingularization, obstructions and families}, Ann.~Global
Ann.~Geom.~{\bf 26} (2004), 117--174.\vspace{-4pt}
\bibitem{Joyce.U1a}  D.~D.~Joyce, \textit{$U(1)$-invariant special Lagrangian 3-folds.~I.~Nonsingular solutions},
 Adv.~Math.~{\bf 192}  (2005),  no.~1, 35--71.\vspace{-4pt}
 \bibitem{Joyce.U1b} D.~D.~Joyce,  \textit{$U(1)$-invariant special Lagrangian 3-folds.~II.~Existence of singular solutions},
 Adv. Math.~{\bf 192}  (2005),  no.~1, 72--134.\vspace{-4pt}
 \bibitem{Joyce.U1c}  D.~D.~Joyce,  \textit{$U(1)$-invariant special Lagrangian 3-folds.~III.~Properties of singular solutions},
 Adv.~Math.~{\bf 192}  (2005),  no.~1, 135--182.\vspace{-4pt}
 \bibitem{Joycebook2} D.~D.~Joyce, {\it Riemannian holonomy groups and calibrated geometry}, 
Oxford Graduate Texts in Mathematics {\bf 12}, Oxford University Press, Oxford,  2007.\vspace{-4pt}
\bibitem{Joyce.int} D.~D.~Joyce, {\it Special Lagrangian 3-folds and integrable systems}, 
 Surveys on geometry and integrable systems, 
 pp.~189--233, Adv.~Stud.~Pure Math.~{\bf 51}, Math.~Soc.~Japan, Tokyo,  2008. \vspace{-4pt}
 \bibitem{JoySalur} D.~D.~Joyce and S.~Salur, {\it Deformations of asymptotically cylindrical coassociative
submanifolds with fixed boundary}, Geom.~Topol. {\bf 9} (2005), 1115--1146.\vspace{-4pt}
\bibitem{Kar.Leung}  S.~Karigiannis and N.~C.-H.~Leung, \textit{Deformations of calibrated subbundles of Euclidean spaces via twisting by special sections}, 
 Ann.~Global Anal.~Geom.~{\bf 42}  (2012),  no.~3, 371--389.\vspace{-4pt}
\bibitem{Kar.Minoo} S.~Karigiannis and M.~Min-Oo, {\it Calibrated subbundles in noncompact manifolds of special holonomy},
 Ann.~Global Anal.~Geom.~{\bf 28}  (2005),  no.~4, 371--394.\vspace{-4pt}
 \bibitem{Kovalev} A.~G.~Kovalev, {\it Coassociative $K3$ fibrations of compact
$\GG_2$-manifolds}, arXiv:math/0511150.\vspace{-4pt}
\bibitem{KovalevLotay} A.~Kovalev and J.~D.~Lotay, {\it Deformations of compact coassociative 4-folds with boundary}, 
 J.~Geom.~Phys. {\bf 59} (2009), 63--73.\vspace{-4pt}
 \bibitem{Lawlor} G.~Lawlor, {\it The angle criterion}, Invent.~Math. {\bf 95} (1989), 437--446.\vspace{-4pt}
 \bibitem{LO} H.~B.~Lawson and R.~Osserman, {\it Non-existence, non-uniqueness and irregularity
of solutions to the minimal surface system}, Acta Math.~{\bf 139} (1977), 1--17.\vspace{-4pt}
 \bibitem{YILee} Y.-I. Lee, {\it Embedded special Lagrangian submanifolds in Calabi--Yau manifolds}, Comm.~Anal.~Geom. {\bf 11} (2003), 391--423.\vspace{-4pt}
 \bibitem{Lotayevol} J.~D.~Lotay, {\it Constructing associative 3-folds by evolution
equations}, Comm.~Anal.~Geom.~{\bf 13} (2005), 999--1037.  \vspace{-4pt}
\bibitem{Lotay2R} J.~D.~Lotay, {\it $2$-ruled calibrated $4$-folds in $\R^7$ and $\R^8$}, J.~London Math.~Soc.~{\bf 74} (2006), 219--243.\vspace{-4pt}
 \bibitem{Lotaysym} J.~D.~Lotay, \textit{Calibrated submanifolds of $\R^7$
 and $\R^8$ with symmetries},  
Q.~J.~Math.~{\bf 58} (2007), 53--70. \vspace{-4pt}
\bibitem{LotayCScoass} J.~D.~Lotay, {\it Coassociative 4-folds with conical singularities}, Comm.~Anal.~Geom. {\bf 15} (2007), 891--946.\vspace{-4pt}
\bibitem{LotayACcoass} J.~D.~Lotay, {\it Deformation theory of asymptotically conical coassociative 4-folds}, Proc.~London Math.~Soc.~{\bf 99} (2009), 386--424.\vspace{-4pt}
\bibitem{Lotaydesing} J.~D.~Lotay, {\it Desingularization of coassociative 4-folds with conical singularities}, Geom. Funct. Anal. {\bf 18} (2009), 2055--2100.\vspace{-4pt}
\bibitem{LotayACass} J.~D.~Lotay, {\it Asymptotically conical associative 3-folds}, Q.~J.~Math. {\bf 62} (2011), 131--156.\vspace{-4pt}
\bibitem{LotayLag} J.~D.~Lotay, \textit{Ruled Lagrangian submanifolds of the 6-sphere}, Trans.~Amer.~Math.~Soc.~{\bf 363} (2011), 2305--2339.\vspace{-4pt}
 \bibitem{Lotayassoc} J.~D.~Lotay, \textit{Associative submanifolds of the 7-sphere}, Proc.~London Math.~Soc.~{\bf 105} (2012), 1183--1214.\vspace{-4pt}
 \bibitem{Lotaystab} J.~D.~Lotay, {\it Stability of coassociative conical singularities}, Comm.~Anal.~Geom. {\bf 20} (2012), 803--867.\vspace{-4pt}
 \bibitem{Lotaydesing2} J.~D.~Lotay, \textit{Desingularization of coassociative 4-folds with conical singularities: obstructions and applications}, Trans.~Amer.~Math.~Soc.~{\bf 366} (2014), 6051--6092. \vspace{-4pt}
 \bibitem{MNWillmore} F.~C.~Marques and A.~Neves, \textit{Min-max theory and the Willmore conjecture}, 
 Ann.~of Math.~(2) {\bf 179}  (2014),  no.~2, 683--782.\vspace{-4pt}
 \bibitem{MNYau} F.~C.~Marques and A.~Neves, \textit{Existence of infinitely many minimal hypersurfaces in positive Ricci curvature}, 
 Invent.~Math.~{\bf 209}  (2017),  no.~2, 577--616.\vspace{-4pt}
\bibitem{McIntosh} I.~McIntosh, {\it Special Lagrangian cones in $\C^3$ and primitive harmonic maps},
 J.~London Math.~Soc.~(2) {\bf 67}  (2003),  no.~3, 769--789.\vspace{-4pt}
 \bibitem{McLean} R.~C.~McLean, {\it Deformations of calibrated submanifolds}, Comm.~Anal.~Geom.~{\bf 6} (1998), 705--747.\vspace{-4pt}
 \bibitem{Moore.cpt} K.~Moore, {\it Cayley deformations of compact complex surfaces}, arXiv:1710.08799.\vspace{-4pt}
 \bibitem{Moore.CS} K.~Moore, {\it Deformations of conically singular Cayley submanifolds}, J.~Geom.~Anal.~(2018).\vspace{-4pt}
\bibitem{Morrey} C.~B.~Morrey, {\it Multiple Integrals in the Calculus of Variations}, Grundlehren Series Volume 130, Springer--Verlag, Berlin, 1966.\vspace{-4pt}
 \bibitem{Ohst1}  M.~Ohst, {\it Deformations of compact Cayley submanifolds with boundary}, 
arXiv:1405.7886.\vspace{-4pt}
\bibitem{Ohst2} M.~Ohst, {\it Deformations of asymptotically cylindrical Cayley submanifolds}, arXiv:1506.00110.\vspace{-4pt}
 \bibitem{Pacini1} T.~Pacini, {\it Special Lagrangian conifolds, I: moduli spaces}, Proc.~London Math.~Soc.~(3) {\bf 107} (2013), 198--224.\vspace{-4pt}
 \bibitem{Pacini2} T.~Pacini, {\it Special Lagrangian conifolds, II: gluing constructions in $\C^m$}, Proc.~London Math.~Soc.~(3) {\bf 107} (2013), no.~2, 225--266.\vspace{-4pt}
 \bibitem{Perelman} G.~Perelman, \textit{Finite extinction time for the solutions to the Ricci flow on certain three-manifolds}, arXiv:math/0307245.\vspace{-4pt}
 \bibitem{SYMass} R.~Schoen and S.~T.~Yau, {\it On the proof of the positive mass conjecture in general
 relativity}, Comm.~Math.~Phys.~{\bf 65}  (1979),  no.~1, 45--76.\vspace{-4pt}
 \bibitem{SYMass2} R.~Schoen and S.~T.~Yau, {\it Positive scalar curvature and minimal hypersurface singularities}, arXiv:1704.05490.\vspace{-4pt}
 \bibitem{SongYau} A.~Song, {\it Existence of infinitely many minimal hypersurfaces in closed manifolds}, arXiv:1806.08816.\vspace{-4pt}
\bibitem{Stenzel} M.~B.~Stenzel, {\it Ricci-flat metrics on the complexification of a compact rank one symmetric space},  Manuscripta Math.~{\bf 80} (1993), no. 2, 151--163.
\end{thebibliography}
\end{document}